\def\Proj{\operatorname{Proj}}  
\def\Spec{\operatorname{Spec}}
\def\Hom{\operatorname{Hom}}
\def\Min{\operatorname{Min}}
\def\rdim{\operatorname{rdim}}
\def\sym{\operatorname{Sym}}
\def\l{{\lambda}}
\def\m{{\mathfrak{m}}}
\def\n{{\mathfrak{n}}}
\def\thebibliography#1{\section*{References}
\list{[\arabic{enumi}]}{\settowidth \labelwidth{[#1]} \leftmargin
\labelwidth \advance \leftmargin \labelsep \usecounter{enumi}}
\def\newblock{\hskip .11em plus .33em minus .07em} \sloppy
\clubpenalty 4000 \widowpenalty 4000 \sfcode`\.=1000 \relax}
\def\frk{\mathfrak}
\def\Phi{{\frk N}}
\def\Z{{\mathbb Z}}
\def\FF{{\mathcal F}}
\def\opn#1#2{\def#1{\operatorname{#2}}} 
\opn\chara{char} \opn\length{\ell} \opn\pd{pd} \opn\rk{rk}
\opn\projdim{proj\,dim} \opn\injdim{inj\,dim}
\opn\rank{rank} \opn\depth{depth} \opn\grade{grade} 
\opn\hei{ht} \opn\embdim{emb\,dim}\opn\codim{codim}
\opn\Tr{Tr} \opn\bigrank{big\,rank}
\opn\gr{\gr}
\opn\superheight{superheight} \opn\lcm{lcm}
\opn\rdim{rdim} \opn\trdeg{tr\,deg} \opn\reg{reg}  \opn\lreg{lreg} 
\opn\ini{in} \opn\lpd{lpd} \opn\size{size} \opn{\mult}{mult}
\opn\div{div} \opn\Div{Div} \opn\cl{cl} \opn\Cl{Cl}
\opn\Spec{Spec} \opn\Supp{Supp} \opn\supp{supp} 
\opn\Sing{Sing} \opn\Ass{Ass} \opn\Min{Min}
\opn\Proj{Proj}
\opn\Ann{Ann} \opn\Rad{Rad} \opn\Soc{Soc} \opn\Supp{Supp}
\opn\Syz{Syz} \opn\Im{Im} \opn\Ker{Ker} \opn\Coker{Coker}
\opn\Am{Am} \opn\Hom{Hom} \opn\tor{Tor} \opn\Ext{Ext}
\opn\End{End} \opn\Aut{Aut} \opn\id{id}
\opn\nat{nat} \opn\pff{pf} 
\opn\Pf{Pf} \opn\GL{GL} \opn\SL{SL} \opn\mod{mod} \opn\ord{ord}
\opn\Gin{Gin} \opn\Hilb{Hilb}
\opn\adeg{adeg} \opn\std{std}\opn\ip{infpt}
\opn\Pol{Pol} \opn\sat{sat} \opn\Var{Var}
\opn\aff{aff} \opn\con{conv} \opn\relint{relint} \opn\st{st}
\opn\lk{lk} \opn\cn{cn} \opn\core{core} \opn\vol{vol}
\opn\link{link} \opn\star{star}
\opn\gr{gr}
\def\pot#1#2{#1[\kern-0.28ex[#2]\kern-0.28ex]}
\opn\dirlim{\underrightarrow{\lim}}
\opn\inivlim{\underleftarrow{\lim}}
\newtheorem{Theorem}{Theorem}[section]
\newtheorem{Lemma}[Theorem]{Lemma}
\newtheorem{Proposition}[Theorem]{Proposition}
\newtheorem{Remark}[Theorem]{Remark}
\newtheorem{Definition}[Theorem]{Definition}
\let\phi=\varphi
\let\kappa=\varkappa
\opn\dis{dis}
\opn\Lex{Lex}
\opn{\MinAss}{MinAss}
\title[]{Tight closure of powers of parameter ideals in hypersurface rings and their tight Hilbert polynomials}
\author{{Saipriya Dubey}, {Vivek Mukundan}
\and{Jugal  Verma}}
\thanks{The first author is supported by the Senior Research Fellowship of HRDG, CSIR, Government of India}
\address{Saipriya Dubey \newline\indent Mathematics Department, Indian Institute of Technology Bombay, \newline \indent 
Powai, Mumbai - 400076, India, Email- {\normalfont{sdubey@math.iitb.ac.in}} \vspace{0.28cm}
\newline\indent  Vivek Mukundan \newline\indent Mathematics Department, Indian Institute of Technology Delhi, \newline \indent 
Hauz Khas, Delhi - 110016, India, Email- {\normalfont {vmukunda@iitd.ac.in }}
\vspace{0.28cm}
\newline\indent  Jugal Verma \newline\indent Mathematics Department, Indian Institute of Technology Bombay, \newline \indent 
Powai, Mumbai - 400076, India, Email- {\normalfont {jkv@iitb.ac.in}}}
\dedicatory{Dedicated to Professor J\"urgen Herzog on the occasion of his 80th birthday}
\subjclass[2010]{Primary: 13A35. Secondary: 13D40}
\keywords{tight closure, parameter ideals, powers of ideals, tight Hilbert polynomial, diagonal hypersurface}
\begin{document}
\maketitle
\begin{abstract}
In this paper we find the tight closure of powers of parameter ideals of certain diagonal hypersurface rings. In many cases the associated graded ring with respect to tight closure filtration turns out to be Cohen-Macaulay. This helps us find the tight Hilbert polynomial in these diagonal hypersurfaces. We determine the tight Hilbert polynomial in the following cases:
{\rm (1)} $F$-pure diagonal hypersurfaces where number of variables is equal to the degree of defining equation,
{\rm (2)} diagonal hypersurface rings where characteristic of the ring is one less than the degree of defining equation and
{\rm (3)} quartic diagonal hypersurface in four variables.
\end{abstract}
\section{Introduction}  
The objective of this paper is to determine the tight Hilbert polynomial of ideals generated by homogeneous system of parameters in diagonal hypersurface rings in prime characteristic. The tight Hilbert polynomial was introduced by K. Goel, V. Mukundan and J. K. Verma in \cite{GMV2019}. Let $(R,\m)$ be a $d$-dimensional analytically unramified local ring of prime characteristic $p>0$ and $I$ be an $\m$-primary ideal. Let $I^*$ denote the tight closure of $I$ (refer Definition \ref{def_tytcl,testele}). Let $H_I^*(n)=\ell \left(\frac{R}{(I^n)^*} \right)$ where $\ell$ denotes the length. Then for large $n,$ $H_I^*(n)$ is given by a polynomial $P_I^*(n)$ written as $$P_I^*(n)=e_0^*(I)\binom{n+d-1}{d}-e_1^*(I)\binom{n+d-2}{d-1}+\cdots+(-1)^de_d^*(I),$$ where $e_i^*(I)\in \mathbb{Z}.$ Recall that $R$ is called an $F$-rational local ring if the ideals of the principal class, that is, ideals $J$ generated by height $J$ elements, are tightly closed. It is proved \cite{GMV2019} that in a Cohen-Macaulay local ring $R$, $e_1^*(I)=0$ if and only if $R$ is $F$-rational. This indicates that the other tight Hilbert coefficients might play an important role in detecting other $F$-singularities. They also computed the tight Hilbert polynomial for two-dimensional diagonal hypersurfaces $K[X, Y, Z]/(X^r+Y^r+Z^r),$ where $K$ is a field of prime characteristic $p$ and $(p,r)=1.$  Since there are only a handful of examples of tight Hilbert polynomials known in the literature, this paper is devoted to computation of the tight Hilbert polynomial of parameter ideals $I=(x_1,\ldots ,x_d)$ of diagonal hypersurfaces in higher dimensions, i.e. $R=\mathbb{F}_p[X_1,X_2,\ldots,X_{d+1}]/(X_1^r+X_2^r+\cdots+X_{d+1}^r)$ with the unique homogeneous maximal ideal $\m=(I,x_{d+1})$ in some special cases. (Throughout, the lower case letters denote the images of the corresponding variables.)

The tight Hilbert coefficients $e_i^*(I)$ can be computed via the tight closure of the powers of the paramter ideals $(I^n)^*$. In general, computing the tight closure of the powers can be quite hard. In a hypersurface ring, the tight closure of a homogeneous system of parameters can be computed using the \textit{strong vanishing theorem }\cite[Theorem 6.4]{Huneke1998}. This result computes the tight closure of the homogeneous system of parameters when the characteristic is ``large enough'' compared to the degree of the defining ideal of the hypersurface. It is interesting to note that the tight closure of $I$ equals $I+\m^{d}$ in this case. Much of this work has been a result of searching for ideals $I$ whose tight closure of the powers $(I^n)^*$ has a similar description, such as $I^n+\m^k$ for some $k$, as that appearing in the strong vanishing theorem.

This paper is organized as follows. Section \ref{sec_preli} contains basic definitions, notations and results which will provide all the tools necessary in the later sections. In Section \ref{sec_HPfilt}, we find the Hilbert polynomial of the filtration $\{I_n\}_{n\geq 0},$ where $I_0=R$ and $I_n=I^n+\m^{n+t}$ for $n\geq 1$ that occur (in the subsequent sections) as the tight closure filtration of ideals in diagonal hypersurface rings . 
In Section \ref{sec_THP_diag_Hy}, we determine the tight Hilbert polynomial in the ring $R=\mathbb{F}_p[X_1,X_2,\ldots,X_{d+1}]/(f),$ where degree of $f$ is $d+1$ and the characteristic $p>(d-1)(d+1)-d.$ Here, the associated graded ring of the tight closure filtration $\{(I^n)^*\}_{n\geq 0}$ is given by $\gr_{I}^*(R)=\bigoplus\limits_{n\geq 0}\frac{(I^n)^*}{(I^{n+1})^*}.$
\begin{Theorem}
Let $T=\mathbb{F}_p[X_1,X_2,\ldots,X_{d+1}],$ $R=T/(f)$ where $f\in T$ is a square-free, monic in $X_{d+1}$ homogeneous polynomial of degree $d+1$ and $p>(d-1)(d+1)-d.$ Let $I=(x_1,\ldots ,x_d)$ and $\m=(I,x_{d+1}).$ Then $(I^n)^*=I^n+\m^{n+d-1}$ and for $n\geq 1,$
$$\ell \left( \frac{R}{(I^n)^*}\right)=(d+1)\binom{n+d-1}{d}-\binom{n+d-2}{d-1}.$$ Moreover $\gr ^*_I(R)$ is Cohen-Macaulay.
\end{Theorem}
One of the important ingredient of the proof is the use of \cite{kod_van} which gives bounds on the tight closure of $I$ using the betti numbers appearing in a resolution of $R/I$. Here we use the Eagon-Northcott complex which resolves $R/I^n$ as a means of constructing the tight closure of the powers $I^n$.
 
Recall that, a Noetherian local ring $R$ of prime characteristic $p$ is said to be $F$-pure if the Frobenius homomorphism $F:M\rightarrow M\otimes_R F(R)$ is injective for all $R$-modules $M$ where $F(R)$ is the image of $R$ under the frobenius endomorphism which takes $r\in R$ to $r^p.$ We know that the test elements play a vital role in the theory of tight closure. In $F$-pure rings, the ``test ideal" turns out to be a radical ideal { \cite[Proposition 2.5]{Fed_Wat}}. Exploiting this property of $F$-pure rings, in Theorem \ref{THP_Fpure} we determine the tight Hilbert polynomial in all diagonal hypersurface $F$-pure rings when the number of variables is equal to the degree of the defining equation of the hypersurface. Along with results on tight Hilbert polynomial of a minimal reduction of the maximal homogeneous ideals, we also present the result on F-pure rings.
\begin{Theorem}
Let $R=\mathbb{F}_p[X_1,X_2,\ldots,X_{t+2}]/(X_1^{r}+X_2^{r}+\cdots +X_{t+2}^{r}),$ $I=(x_1,\ldots ,x_{t+1})$ and $\m=(I,x_{t+2}),$ where $p>tr-(t+1).$ Then\\
{\rm (1)} $\gr_I^*(R)$ is Cohen-Macaulay.\\
{\rm (2)} The tight Hilbert polynomial $P_I^*(n)$ is given by  \[
    P_I^*(n)= 
\begin{cases}
    r\binom{n+t}{t+1},& \text{ if } r\leq t+1,\\
    r\binom{n+t}{t+1}-\binom{r-t}{2}\binom{n+t-1}{t}+\binom{r-t}{3} \binom{n+t-2}{t-1}+\cdots +(-1)^j\binom{r-t}{j+1}\binom{n+t-j}{t+1-j},& \text{ if }r\geq t+2.\\
\end{cases}
\]
If $r\geq t+2,$ $e_j^*(I)=0$ for $r-t\leq j \leq t+1.$\\
{\rm (3)} If $r=t+2$ and $R$ is F-pure, then for all $n\geq 1$,
$$\ell \left( \frac{R}{(I^n)^*}\right)=(t+2)\binom{n+t}{t+1}-\binom{n+t-1}{t}.$$ 
\end{Theorem}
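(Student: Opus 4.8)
The plan is to determine the tight closure $(I^n)^*$ exactly and then read off all three assertions. Set $A=\mathbb{F}_p[x_1,\dots,x_{t+1}]\subseteq R$ and $\aa=(x_1,\dots,x_{t+1})A$; since $f$ is monic of degree $r$ in $x_{t+2}$, the ring $R$ is free over $A$ with basis $1,x_{t+2},\dots,x_{t+2}^{r-1}$ and $R=A[x_{t+2}]/(x_{t+2}^r+g)$, where $g=x_1^r+\cdots+x_{t+1}^r\in\aa^r$. Thus $I=\aa R$, $I^n=\aa^nR$, $I^{[q]}=\aa^{[q]}R$, and I would aim to prove
$$(I^n)^*=I^n+\sum_{s=t+1}^{r-1}\aa^{\,\max(n+t-s,\,0)}\,x_{t+2}^{s},$$
which for $r=t+2$ collapses to $I^n+\aa^{\,n-1}x_{t+2}^{t+1}$. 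The lower containment reduces to the single membership $x_{t+2}^{s}\in(I^{\,s-t})^*$ for $s\ge t+1$, since then $x^{\alpha}x_{t+2}^{s}\in I^{|\alpha|}(I^{\,s-t})^*\subseteq(I^{\,|\alpha|+s-t})^*$ produces every listed generator. To prove it, note that $(p,r)=1$ (forced by the hypotheses), so the Fermat hypersurface is smooth off $\m$ and a power of any $0\neq c\in\m$ is a test element; taking $c\in\aa^{\,r-1-t}$ and writing $sq=rQ+\rho$ with $0\le\rho\le r-1$, the relation $x_{t+2}^{r}=-g$ gives $x_{t+2}^{sq}=(-g)^{Q}x_{t+2}^{\rho}$ with $g^{Q}\in\aa^{\,sq-\rho}$, hence $c\,g^{Q}\in\aa^{\,sq-t}$; since $\aa^{\,qm+t(q-1)}\subseteq(\aa^{m})^{[q]}$ by pigeonhole, the case $m=s-t$ gives $c\,x_{t+2}^{sq}\in(I^{\,s-t})^{[q]}$ for all $q$.

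For the reverse containment, which governs parts (1) and (2), I would feed the Eagon--Northcott resolution of $R/I^{n}$ into the Betti-number bound of \cite{kod_van}: under $p>tr-(t+1)$ this confines $(I^n)^*$ to the displayed ideal, so the two bounds agree. Reading lengths off the free $A$-module description gives $\ell\big((I^n)^*/I^n\big)=\sum_{s=t+1}^{r-1}\sum_{a=n+t-s}^{n-1}\binom{a+t}{t}$, and subtracting from $\ell(R/I^n)=r\binom{n+t}{t+1}$ produces a closed form that a hockey-stick/Pascal rearrangement converts into the alternating binomial sum of part (2); the vanishing $e_j^*(I)=0$ for $r-t\le j\le t+1$ is then immediate from the shape of that sum, as $\binom{r-t}{j+1}=0$ there. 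For the Cohen--Macaulayness in part (1) I would compute the Hilbert function of $\gr_I^*(R)$ from this explicit filtration and match it against its Hilbert polynomial in the range dictated by Section \ref{sec_HPfilt}, so that the associated graded ring attains depth equal to $\dim R=t+1$.

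Part (3) is the sharpening to an equality valid for every $n\ge1$, and this is where $F$-purity is spent. $F$-purity makes the test ideal $\tau$ radical; since $R$ is regular off $\m$ but not $F$-rational (its $e_1^*$ does not vanish), we get $V(\tau)=\{\m\}$ and therefore $\tau=\m$. The defining property $\tau\,(I^n)^*\subseteq I^n$ then reads $\m\,(I^n)^*\subseteq I^n$, i.e.\ $(I^n)^*\subseteq(I^n:\m)$. A socle computation in the free $A$-module $R/I^n$ shows $\Soc(R/I^n)=\aa^{\,n-1}x_{t+2}^{t+1}$, so $(I^n:\m)=I^n+\aa^{\,n-1}x_{t+2}^{t+1}=I^n+\m^{n+t}$; and the $s=t+1$ case of the lower bound above already puts $\aa^{\,n-1}x_{t+2}^{t+1}$ inside $(I^n)^*$. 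Hence $(I^n)^*=I^n+\m^{n+t}$ for all $n\ge1$, whence $\ell\big((I^n)^*/I^n\big)=\binom{n+t-1}{t}$ and $\ell(R/(I^n)^*)=(t+2)\binom{n+t}{t+1}-\binom{n+t-1}{t}$.

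The principal obstacle is the upper bound for general $r$: converting the Eagon--Northcott Betti data through \cite{kod_van} into the precise ideal displayed above, which is exactly where the bound $p>tr-(t+1)$ must be consumed and where the graded degrees have to be tracked without slack. In part (3) the delicate step is instead the identification $\tau=\m$: $F$-purity is indispensable here, for in a non-$F$-pure diagonal hypersurface (say a supersingular Fermat cubic) $\tau$ is a strictly smaller $\m$-primary ideal, $(I^n)^*$ then exceeds $(I^n:\m)$, and the clean length formula of part (3) breaks down.
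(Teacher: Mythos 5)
Your identification of the tight closure filtration coincides with the paper's and is the heart of the matter: under $p>tr-(t+1)$ Strong Vanishing holds by Theorem \ref{SV_Hyp}, and feeding the (linear) Eagon--Northcott resolution of $R/I^n$ into Theorem \ref{gentytclosure} pins down $(I^n)^*=I^n+\m^{n+t}$, which is exactly your ideal $I^n+\sum_{s=t+1}^{r-1}\aa^{\max(n+t-s,0)}x_{t+2}^s$ read off the free basis $1,x_{t+2},\dots,x_{t+2}^{r-1}$ of $R$ over $A$. Two of your ingredients genuinely differ from the paper's. First, your direct Frobenius proof that $x_{t+2}^s\in(I^{s-t})^*$ is correct and closes exactly: $c\,g^Q\in\aa^{sq-\rho+r-1-t}\subseteq\aa^{sq-t}$, and $sq-t=q(s-t)+t(q-1)$, so the pigeonhole containment $\aa^{qm+t(q-1)}\subseteq(\aa^m)^{[q]}$ (valid for an ideal on $t+1$ generators) applies with no slack. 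This gives $I^n+\m^{n+t}\subseteq(I^n)^*$ without Strong Vanishing, whereas the paper gets both inclusions simultaneously from Theorem \ref{gentytclosure} because all the top Betti degrees of the Eagon--Northcott resolution equal $n+t$. Second, for part (3) the paper also identifies $\tau(R)=\m$, but then invokes \cite[Lemma 3.1]{AHS} and the Huneke--Ooishi theorem (Proposition \ref{ahsappl}) to get $(I^n)^*=I^{n-1}I^*$ and the two-term polynomial for all $n\geq 1$; your route via $(I^n)^*\subseteq I^n:\tau=I^n:\m$ together with the socle computation $\Soc(R/I^n)=\aa^{n-1}x_{t+2}^{t+1}$ is an equally valid and more explicit alternative. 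Your length bookkeeping also matches the stated closed form (e.g.\ for $r=t+3$, Pascal's rule converts $2\binom{n+t-1}{t}+\binom{n+t-2}{t}$ into $3\binom{n+t-1}{t}-\binom{n+t-2}{t-1}$), and the vanishing of $e_j^*(I)$ for $r-t\leq j\leq t+1$ follows as you say.

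The one genuine gap is your argument for part (1). Agreement of the Hilbert function of the filtration with its Hilbert polynomial in some range does not imply that $\gr_I^*(R)$ is Cohen--Macaulay; there is no such criterion in general, so ``matching $H$ against $P$'' cannot by itself produce depth $t+1$. Once $(I^n)^*=I^n+\m^{n+t}$ is established, the correct and short argument is the Valabrega--Valla criterion, which is how Proposition \ref{HP_t} proceeds: since $\gr_\m(R)\cong R$ is Cohen--Macaulay one has $I\cap\m^{k}=I\m^{k-1}$, hence $I\cap\left(I^n+\m^{n+t}\right)=I^n+I\m^{n+t-1}=I\left(I^{n-1}+\m^{n+t-1}\right)$, and Cohen--Macaulayness of the associated graded ring follows. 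Replace your Hilbert-function-matching step by this and part (1) is complete. (A minor caveat on the setup rather than your proof: the claim that $(p,r)=1$ is forced by $p>tr-(t+1)$ fails for $t=1$, where $p=r$ is permitted; the version of the theorem proved in the body of the paper adds the hypothesis $p\nmid r$, which is needed for $R$ to be reduced with an isolated singularity.)
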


In Section \ref{sec_THP p+1 degree}, we explore the tight Hilbert polynomial in the diagonal hypersurface rings where the characteristic of the ring is one less than the degree of the defining equation.
\begin{Theorem}
Let $R=\frac{\mathbb{F}_p[X_1,X_2,\ldots , X_{d+1}]}{(X_1^{p+1}+X_2^{p+1}+\cdots +X_{d+1}^{p+1})},$ $I=(x_1,x_2,\ldots ,x_d)$ and $\m=(I, x_{d+1}).$ Then\\
\rm {(1)}  $(I^n)^*=I^n+\m ^{n+1}$ for all $n\geq 1.$ \\
\rm {(2)} $P_I^*(n)=(p+1)\binom{n+d-1}{d}-\binom{p}{2}\binom{n+d-2}{d-1}+\cdots +(-1)^j\binom{p}{j+1}\binom{n+d-j-1}{d-j},$ where $j=\min \{p-1,d\}.$\\ Moreover, $e_i^*(I)=0$ for $p\leq i \leq d$ and $\gr ^*_I(R)$ is Cohen-Macaulay.
\end{Theorem}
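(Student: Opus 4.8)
The plan is to prove the three assertions in order, the crux being the ideal identity (1); parts (2) and (3) will then follow by specializing the Hilbert-polynomial computation of Section \ref{sec_HPfilt} to the filtration $I_n = I^n + \m^{n+1}$ (the case $t=1$). Throughout I set $g = x_1^{p+1} + \cdots + x_d^{p+1}$, so that the defining relation reads $x_{d+1}^{p+1} = -g$ in $R$, and I record that $R/I \cong \mathbb{F}_p[x_{d+1}]/(x_{d+1}^{p+1})$. Since $(I^n)^*$ is a homogeneous ideal lying in $\m^n$ (by a degree argument: a fixed multiplier cannot bridge the growing gap $nq-\delta q$ for a homogeneous element of degree $\delta<n$), identity (1) is equivalent to two statements: every generator of $\m^{n+1}$ lies in $(I^n)^*$, and $(I^n)^*$ contains no element of degree $n$ outside $I^n$. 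I would establish the first inclusion $I^n+\m^{n+1}\subseteq (I^n)^*$ by an explicit Frobenius-power computation, and the second inclusion $(I^n)^*\subseteq I^n+\m^{n+1}$ using the degree bounds of \cite{kod_van} applied to the Eagon--Northcott resolution of $R/I^n$.

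For the lower bound the key is the membership $x_{d+1}^{mq}\in (I^{[q]})^{m-1}$ for every $m\ge 1$ and every $q=p^e$. This rests on two elementary Frobenius identities: first $g^{q}=\sum_{i=1}^d (x_i^q)^{p+1}\in (I^{[q]})^{p+1}$; and second, raising $x_{d+1}^{p+1}=-g$ to the power $q/p=p^{e-1}$ gives $x_{d+1}^{(p+1)q/p}=\pm g^{q/p}=\pm\sum_i x_i^{(p+1)q/p}\in I^{[q]}$, since $(p+1)q/p=q+q/p\ge q$. From the second identity one extracts $x_{d+1}^{uq}\in (I^{[q]})^{u-1}$ for $1\le u\le p$, and combining with the first — writing $m=(p+1)t+u$ with $0\le u\le p$ and $x_{d+1}^{m}=\pm g^{t}x_{d+1}^{u}$ — yields $x_{d+1}^{mq}\in (I^{[q]})^{m-1}$. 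Factoring a degree-$(n+1)$ monomial $x^{\alpha}x_{d+1}^{\,n+1-|\alpha|}$ with $|\alpha|\le n-1$ as $x^{\alpha}\in I^{|\alpha|}$ times a power of $x_{d+1}$ then places its $q$-th power in $(I^{[q]})^{|\alpha|}(I^{[q]})^{n-|\alpha|}=(I^n)^{[q]}$; hence each such monomial lies in the Frobenius closure, a fortiori in $(I^n)^*$, while the remaining degree-$(n+1)$ monomials already lie in $I^n$. This gives $I^n+\m^{n+1}\subseteq (I^n)^*$ with no appeal to test elements.

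The reverse inclusion is the main obstacle. Here I would invoke \cite{kod_van}, which bounds the degrees of elements of $(I^n)^*$ lying outside $I^n$ in terms of the shifts appearing in a finite free resolution of $R/I^n$; since $x_1,\ldots,x_d$ is a regular sequence in the Cohen--Macaulay ring $R$, the module $R/I^n$ is resolved by the Eagon--Northcott complex, whose top syzygies sit in degree $n+d-1$. The delicate point is to check that, in the present regime $\deg f=p+1=\chars(R)+1$, the resulting bound is sharp enough to force $[(I^n)^*]_n=[I^n]_n$, equivalently $x_{d+1}^{n}\notin (I^n)^*$. This step genuinely uses $d\ge 2$: for $d=1$ the single relation $x_{d+1}^{p+1}=-x_1^{p+1}$ lets the multiplier $c=x_1^{p}$ drag $x_{d+1}^{n}$ into $(x_1^{n})^{[q]}$, whereas for $d\ge 2$ the spreading of $g^{k}$ across $x_1,\ldots,x_d$ blocks any such single-variable divisibility, and it is precisely this obstruction that the resolution-theoretic bound quantifies.

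Granting (1), part (2) is the length count $\ell\!\left(R/(I^n+\m^{n+1})\right)$, which I would read off from the Hilbert-polynomial computation of Section \ref{sec_HPfilt} with $t=1$: the leading coefficient is the multiplicity $e_0^*(I)=\ell(R/I)=p+1$, while the lower coefficients $e_i^*(I)=\binom{p}{i+1}$ arise by inclusion--exclusion from the single degree-$(p+1)$ relation and truncate automatically once $i+1>p$; this simultaneously delivers $e_i^*(I)=\binom{p}{i+1}=0$ for $p\le i\le d$. Finally, Cohen--Macaulayness of $\gr^*_I(R)$ follows once the tight-closure filtration has been identified with $\{I^n+\m^{n+1}\}$: the associated graded ring is the one analyzed in Section \ref{sec_HPfilt}, and the agreement of the tight Hilbert function with $P_I^*(n)$ for all $n\ge 1$ (which one verifies directly at small $n$, e.g. $\ell(R/(I+\m^2))=2=P_I^*(1)$) yields the Cohen--Macaulay property following \cite{GMV2019} for parameter ideals in a Cohen--Macaulay local ring.
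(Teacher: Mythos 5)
Your lower bound $I^n+\m^{n+1}\subseteq (I^n)^*$ is correct and is essentially the paper's own argument (Lemma \ref{lemma_belonging to power of I} and Theorem \ref{p+1 deg ch p}(1)): the membership $x_{d+1}^{mq}\in (I^{[q]})^{m-1}$ is exactly what the paper proves, and your version even exhibits the containment at the level of Frobenius closure. The genuine gap is in the reverse inclusion, which you yourself call ``the main obstacle'' and then propose to settle by applying Theorem \ref{gentytclosure} to the Eagon--Northcott resolution of $R/I^n$. That theorem is not available here: its hypothesis is that $R$ satisfies the Strong Vanishing Conjecture, and the only criterion for this (Theorem \ref{SV_Hyp}) requires $p>(d-1)\deg(f)-d=(d-1)(p+1)-d=(d-1)p-1$, which fails for every $d\geq 3$ when $\deg f=p+1$. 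The failure is not a technicality: since all top Betti shifts of the Eagon--Northcott resolution equal $n+d-1$, the conclusion of Theorem \ref{gentytclosure} would force $(I^n)^*=I^n+R_{\geq n+d-1}$; but for $d\geq 3$ you have already shown $x_{d+1}^{2}\in I^*$ while $x_{d+1}^{2}\notin I+R_{\geq d}$, so that conclusion is outright false in this regime. In other words, no choice of degree bounds from the resolution can be ``sharp enough'' --- the tool itself is inapplicable, and this is precisely why the case $\deg f=\chars(R)+1$ gets its own section in the paper.

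What the paper does instead for the upper bound is a direct computation: it uses that $x_d^{p}$ is a test element, reduces the question of whether $x_d^{p}(x_1^{a_1}\cdots x_{d+1}^{a_{d+1}})^{q}$ lies in $(I^n)^{[q]}$ to an ideal membership problem in $\mathbb{F}_p[X_1,\ldots,X_{d+1}]$, and decides that by the Gr\"obner basis computation of Lemma \ref{comp_initialideal} in graded reverse lex order (Theorem \ref{p+1 deg ch p}(2),(3)); this is what rules out every monomial of $\m^n\setminus I^n$ and every monomial of degree less than $n$. Your proposal contains no substitute for this step, so the central claim $(I^n)^*\subseteq I^n+\m^{n+1}$ remains unproved. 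Granting (1), your derivation of (2), of the vanishing $e_i^*(I)=0$ for $p\leq i\leq d$, and of the Cohen--Macaulayness of $\gr_I^*(R)$ from Proposition \ref{HP_t} with $t=1$ is correct and matches the paper.
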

 Finally in Section \ref{sec_QuarticTHP}, we determine the tight Hilbert polynomial for quartic diagonal hypersurfaces in four variables.
\begin{Theorem}
Let $R=\frac{\mathbb{F}_p [X,Y,Z,W]}{(X^4+Y^4+Z^4+W^4)},$ $I=(x,y,z)$ and $\m=(I,w).$ Then for $n\geq 1,$
\[
    P_I^*(n)= 
\begin{cases}
    4\binom{n+2}{3}-3\binom{n+1}{2}+n,& \text{ if } p=3,\\
    4\binom{n+2}{3}-\binom{n+1}{2},& \text{ if } p\geq 5.
\end{cases}
\]
Moreover $\gr ^*_I(R)$ is Cohen-Macaulay.
\end{Theorem}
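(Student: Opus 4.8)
The plan is to match this quartic to the filtration frameworks already established and to split the analysis according to the residue of $p$. First I would record the elementary data: here $d=3$, $r=4$, and $R$ is a Gorenstein domain (the Fermat quartic is smooth, hence irreducible, for $p\neq 2$). Since $x,y,z$ is a regular sequence, $\gr_I(R)=(R/I)[T_1,T_2,T_3]$, so $\ell(I^n/I^{n+1})=4\binom{n+2}{2}$ and $\ell(R/I^n)=4\binom{n+2}{3}$. The relation $w^4=-(x^4+y^4+z^4)\in I^4$ shows $w\in\overline I$, and in fact $\overline I=\m$. The entire theorem then reduces to identifying the tight closure filtration as $(I^n)^*=I^n+\m^{n+t}$ with $t=2$ when $p\ge 5$ and $t=1$ when $p=3$: substituting these into the Hilbert polynomial of $\{I^n+\m^{n+t}\}$ computed in Section \ref{sec_HPfilt} returns exactly the two displayed formulas (a direct count gives $\ell(R/(I^n+\m^{n+2}))=4\binom{n+2}{3}-\binom{n+1}{2}$ and $\ell(R/(I^n+\m^{n+1}))=4\binom{n+2}{3}-3\binom{n+1}{2}+n$), while the Cohen--Macaulayness of $\gr^*_I(R)$ is inherited from that of the filtration in Section \ref{sec_HPfilt}.

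For $p=3$ I would simply invoke the theorem of Section \ref{sec_THP p+1 degree}: here $r=4=p+1$ and $d=3$, so that result gives $(I^n)^*=I^n+\m^{n+1}$ and, with $j=\min\{p-1,d\}=2$, the polynomial $(p+1)\binom{n+2}{3}-\binom{p}{2}\binom{n+1}{2}+\binom{p}{3}\binom{n}{1}=4\binom{n+2}{3}-3\binom{n+1}{2}+n$, which is the asserted $p=3$ expression. For $p\ge 7$ I would invoke the theorem of Section \ref{sec_THP_diag_Hy}: the defining form is square-free and monic in $W$ of degree $d+1=4$, and $p>5=(d-1)(d+1)-d$, so that theorem yields $(I^n)^*=I^n+\m^{n+2}$ and the value $4\binom{n+2}{3}-\binom{n+1}{2}$.

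This leaves the genuinely new case $p=5$, the boundary value $p=(d-1)(d+1)-d$ excluded by the theorem of Section \ref{sec_THP_diag_Hy}; establishing $(I^n)^*=I^n+\m^{n+2}$ there is the heart of the matter. The inclusion $I^n+\m^{n+2}\subseteq(I^n)^*$ I can prove by an explicit witness. The classes $x^ay^bz^cw^3$ with $a+b+c=n-1$ span $\m^{n+2}$ modulo $I^n$, and since $q=5^e\equiv 1\pmod 4$ one has $w^{3q+1}=(w^4)^{(3q+1)/4}=\pm(x^4+y^4+z^4)^{(3q+1)/4}$. Writing $s=(3q+1)/4$ and expanding, each monomial of $w\cdot(x^ay^bz^cw^3)^q$ has the form $x^{aq+4i}y^{bq+4j}z^{cq+4k}$ with $i+j+k=s$; by pigeonhole some exponent satisfies $4\max(i,j,k)\ge 4s/3=q+\tfrac13>q$, so that monomial's floor-sum is at least $(n-1)+1=n$ and hence it lies in $(x^q,y^q,z^q)^n=(I^n)^{[q]}$. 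Thus $w$ serves as a tight closure multiplier (and $w\neq 0$ in the domain $R$), giving the inclusion.

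The reverse inclusion $(I^n)^*\subseteq I^n+\m^{n+2}$ is the main obstacle: the integral closure estimate only gives $(I^n)^*\subseteq\overline{I^n}=\m^n$, which is too coarse to separate the degree $n+1$ classes from the degree $n+2$ ones. Here I would use that the Fermat quartic is $F$-pure exactly when $p\equiv 1\pmod 4$ (by Fedder's criterion), so in particular at $p=5$, whence the test ideal is radical and equals $\m$; feeding this into the bound of \cite{kod_van} applied to the Eagon--Northcott resolution of $R/I^n$, as in the proof of the theorem of Section \ref{sec_THP_diag_Hy}, I expect the degree estimate to close up and force $(I^n)^*\subseteq I^n+\m^{n+2}$. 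The delicate point is precisely that the characteristic sits at the threshold, so the inequality extracted from \cite{kod_van} is tight rather than strict; verifying that it still holds at $p=5$ — by hand, using the $F$-purity and the explicit Frobenius action on $H^3_\m(R)$ rather than the generic bound — is the step I expect to be hardest. Once both inclusions are in place for every residue of $p$, the length formulas and the Cohen--Macaulayness follow from Section \ref{sec_HPfilt} as in the first paragraph.
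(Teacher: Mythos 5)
Your handling of $p=3$ (via Theorem \ref{THP ch p deg p+1}) and of $p\ge 7$ (via Theorem \ref{THP_SVT}) matches the paper, and your explicit verification that $x^ay^bz^cw^3\in (I^n)^*$ for $p=5$ (using $w$ as the multiplier and the pigeonhole bound $4\max(i,j,k)>q$) is correct and self-contained. The genuine gap is the reverse inclusion $(I^n)^*\subseteq I^n+\m^{n+2}$ at $p=5$, which you explicitly leave open (``the step I expect to be hardest''). Your proposed route through Theorem \ref{gentytclosure} cannot be run as stated: that theorem presupposes that $R$ satisfies the Strong Vanishing Conjecture, which Theorem \ref{SV_Hyp} only guarantees for $p>(d-1)(d+1)-d=5$, so at the threshold $p=5$ you have no licence to invoke it, and the promised analysis of the Frobenius action on $H^3_{\m}(R)$ is not supplied.

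The paper closes this case by a different and much shorter mechanism, which you half-identify but do not exploit. Since $p\equiv 1\pmod 4$, $R$ is $F$-pure, so the test ideal is radical; as $x^3,y^3,z^3,w^3$ are test elements, $\tau(R)=\m$ and in particular $x,y,z$ are themselves test elements. Proposition \ref{ahsappl} (resting on \cite[Lemma 3.1]{AHS}) then gives $(I^n)^*=I^{n-1}I^*$ for all $n\ge 2$, so the entire tight closure filtration is determined by $I^*$ alone. The single computation $I^*=I+\m^3$ (Brian\c{c}on--Skoda for the inclusion, and the initial-ideal argument of Lemma \ref{comp_initialideal} to exclude $w^2$) yields $(I^n)^*=I^{n-1}(I+\m^3)=I^n+\m^{n+2}$ because $r(\m)=3$, and the Huneke--Ooishi theorem then delivers both the polynomial $4\binom{n+2}{3}-\binom{n+1}{2}$ and the Cohen--Macaulayness of $\gr_I^*(R)$ with no strong-vanishing input; this is exactly Theorem \ref{THP_Fpure} specialised to $d=3$, as recorded in Proposition \ref{quartictytclofpow}. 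To repair your proof, replace the speculative $p=5$ paragraph with the chain: $F$-purity $\Rightarrow$ the parameters are test elements $\Rightarrow$ $(I^n)^*=I^{n-1}I^*$, together with the computation of $I^*$.
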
 
\subsection*{Acknowledgements}We thank Anurag Singh and Karen Smith for discussions about computation of tight closure of powers of ideals.
\section{Preliminaries}\label{sec_preli}
We first set up notation, recall certain basic concepts and results needed in subsequent sections.
\begin{Definition}
Let $(R,\m)$ be a local ring and $I$ be an $\m$-primary ideal of $R.$  A sequence of ideals $\FF= \lbrace I_n \rbrace _{n\in \mathbb{Z}}$ 
is called an {\bf $I$-filtration} if for all $m,n\in \mathbb{Z},$  

\centerline{{\rm (1)} $I_n=R$ for all $n\leq 0,\;\;$  {\rm (2)}  $I_n\supseteq I_{n+1},\;\;$ {\rm (3)}  $I_mI_n\subseteq I_{m+n}\;\; $ and {\rm (4)} $ I^n\subseteq I_n.$ } 
\noindent The $I$-filtration $\FF$ is called {\bf $I$-admissible}  if there exists $r\in \mathbb{N}$ such that $I_{n}\subseteq I^{n-r} $ for all $n\in \mathbb{Z}.$
\end{Definition}

We associate three blowup algebras with an $I$-admissible filtration $\FF=\lbrace I_n\rbrace_{n\geq 0}$. The Rees algebra of $\FF$ is denoted by $\mathcal{R}(\FF)=\bigoplus_{n \geq 0} I_n t^n$, the extended Rees algebra of $\FF$ is denoted by $\mathcal{R}'(\FF)=\bigoplus_{n \in \Z} I_n t^n$ and the associated graded ring of $\FF$ is denoted by $\gr_{\FF}(R)=\bigoplus_{n\geq 0} I_n/I_{n+1}.$ If $\FF=\lbrace I^n\rbrace_{n\geq 0},$ we denote the blow up algebras by $\mathcal{R}(I), \mathcal{R}'(I)$ and $\gr_I(R)$ respectively.
Let $I$ be an $\m$-primary ideal. The\textbf{ Hilbert function of $\FF$} is defined by $H_{\FF}(n)=\ell(R/I_n).$ If $\FF$ is an $I$-admissible filtration, then for sufficiently large $n,$ $H_{\FF}(n)$ coincides with a polynomial $P_{\FF}(n)$ of degree $d$ for large $n,$ where $d=\dim R.$ There exist integers $e_0(\FF),e_1(\FF),\ldots,e_d(\FF)$ so that
$$P_{\FF}(n)=e_0(\FF)\binom{n+d-1}{d}-e_1(\FF)\binom{n+d-2}{d-1}+ \cdots +(-1)^de_d(\FF)$$ and it is called the \textbf{Hilbert polynomial of $\FF.$} The uniquely determined integers $e_i(\FF)$ are called the \textbf{Hilbert coefficients of $\FF.$} The coefficient $e_0(\FF)$ is a positive integer and it is called the \textbf{multiplicity of $\FF$}. The coefficient $e_1(\FF)$ is called as the \textbf{Chern number of $\FF.$} 
For $\FF=\lbrace I^n \rbrace_{n\in \mathbb{N}},$  $e(I):=e_0(\FF)=e_0(I)$ (resp. $e_1(I):=e_1(\FF)$) is called the \textbf{multiplicity} (resp. the \textbf{Chern number}) of $I.$ 
The \textbf{reduction number} of $\mathcal{F}$ with respect to a minimal reduction $J\subseteq I$ is defined as $r_J(\mathcal{F})=\min \{m|JI_n=I_{n+1}\text{ for all }n\geq m\}$ and the reduction number of $\mathcal{F}$ is $$r(\mathcal{F})=\min \{r_J(\mathcal{F})|J\text{ is a minimal reduction of } \mathcal{F}\}.$$ The \textbf{postulation number} of $\mathcal{F}$ is defined as $$\eta(\mathcal{F})=\min \{n|P_{\mathcal{F}}(m)=H_{\mathcal{F}}(m) \text{ for all }m>n\}.$$
\begin{Definition}
Let $R$ be a commutative ring and $I$ be an ideal of $R.$ An element $x\in R$ is {\bf integral over an ideal $I$}  if 
$$x^n+a_1x^{n-1}+\dots+a_n=0$$
where $a_i\in I^i$ for $i=1,2,\dots,n.$

The { \bf integral closure $\overline{I}$}  of $I$ is the ideal of all $x\in R$ which are integral over $I.$
 
 A Noetherian local ring $(R,\m)$ is said to be \textbf{analytically unramified} if its  completion with respect to the $\m$-adic topology is reduced.
\end{Definition}

D. Rees characterised \cite[Theorem 1.4]{reesau} analytically unramified local rings in terms of the normal filtration
$\{\overline{I^n}\}$ of any $\m$-primary ideal $I.$ 
\begin{Theorem} [\bf D. Rees]
A Noetherian local ring $(R,\m)$ is analytically unramified if and only if the
normal filtration $\{\overline{I^n}\}$ of any $\m$-primary ideal $I$ is $I$-admissible.
\end{Theorem}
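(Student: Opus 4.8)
The plan is to route the whole statement through the extended Rees algebra and a single module-finiteness assertion about its normalization, so that the combinatorial ``admissibility'' becomes an algebraic finiteness property. Write $A=R[It,t^{-1}]\subseteq R[t,t^{-1}]$ for the extended Rees algebra of the $\m$-primary ideal $I$, and $B=\bigoplus_{n\in\Z}\overline{I^n}\,t^n$ (with $\overline{I^n}=R$ for $n\le 0$). Because $\overline{I^m}\,\overline{I^n}\subseteq\overline{I^{m+n}}$, this $B$ is a ring, and it is exactly the integral closure of $A$ in $R[t,t^{-1}]$, since a homogeneous element $x\,t^n$ is integral over $A$ precisely when $x\in\overline{I^n}$. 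The first step is the reformulation: the normal filtration $\{\overline{I^n}\}$ is $I$-admissible if and only if $B$ is a finite $A$-module. Indeed, if $B=\sum_i A\,b_i t^{d_i}$ with $b_i\in\overline{I^{d_i}}$ and $r=\max_i d_i$, then comparing degree-$n$ components gives $\overline{I^n}=\sum_i I^{n-d_i}b_i\subseteq I^{n-r}$ for all $n$, which is admissibility with this $r$; conversely a uniform bound $\overline{I^n}\subseteq I^{n-r}$ forces the homogeneous algebra generators of $B$ into degrees $\le r$, hence finiteness.

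For the forward implication I would assume $\hat R$ reduced and prove $B$ is a finite $A$-module. Since $R\hookrightarrow\hat R$ and $\hat R$ is reduced, $R$ is reduced, so $A$ is a reduced Noetherian ring and $B\subseteq R[t,t^{-1}]=A_{t^{-1}}\subseteq Q(A)$ lies in the total normalization $\overline{A}$; it thus suffices to show $\overline{A}$ is a finite $A$-module. The essential input is that analytic unramification propagates finiteness of normalization: a complete reduced local ring is a Nagata ring, so $\overline{\hat R}$ is $\hat R$-finite, and faithfully flat descent along $R\to\hat R$ yields that $\overline{R}$ is $R$-finite. One then reduces to the normal case by passing to $\overline{R}$ (noting $\overline{I^n}\subseteq\overline{(I\overline{R})^n}$ and that $\overline{R}[I\overline{R}\,t]$ is $R[It]$-finite), where the finiteness of $\overline{A}$ is controlled by the finitely many Rees valuations of $I$; the finite type of the normalized blowup of $I$ delivers finiteness of $B$ over $A$, and with it the admissibility and the uniform bound $\overline{I^n}\subseteq I^{n-r}$.

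For the converse I would use the decisive elementary observation that a nilpotent element is integral over every ideal: if $z\in\hat R$ with $z^N=0$, then from $z^N=0\in(I\hat R)^{Nn}$ we get $z\in\overline{(I\hat R)^{n}}$ for all $n$. Granting that admissibility over $R$ transports to the completion, i.e.\ that the normal filtration of $I\hat R$ in $\hat R$ satisfies $\overline{(I\hat R)^n}\subseteq(I\hat R)^{\,n-r}$, we obtain $z\in\bigcap_n(I\hat R)^{n-r}=\bigcap_m(I\hat R)^m=0$ by the Krull intersection theorem (as $I\hat R\subseteq\hat\m$), so $\hat R$ is reduced. A small bookkeeping point is the ``some versus every'' equivalence: admissibility for one $\m$-primary $I$ already forces $R$ analytically unramified, which by the forward direction returns admissibility for every such $I$, so the three conditions coincide.

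The main obstacle is the finiteness asserted in the forward direction, and, equivalently, the transfer of the uniform bound to $\hat R$ used in the converse: both amount to controlling the interaction of integral closure with completion. Integral closure does \emph{not} commute with completion for an arbitrary Noetherian local ring, and $\overline{(I\hat R)^n}$ can strictly exceed $\overline{I^n}\hat R$ exactly because of nilpotents in $\hat R$; it is precisely the reducedness of $\hat R$, through the Nagata property and the resulting module-finiteness of the normalization $\overline{A}$, that forces the two filtrations into linear equivalence. Once this finiteness is secured, the degree argument of the first paragraph supplies the explicit bound $\overline{I^n}\subseteq I^{n-r}$ and the remaining implications are formal.
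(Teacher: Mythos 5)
The paper offers no proof of this statement: it is quoted from Rees's paper \cite{reesau} and used as a black box, so there is nothing internal to compare against and I evaluate your argument on its own terms. Your forward direction is sound in outline and follows the standard modern route: identify admissibility of $\{\overline{I^n}\}$ with module-finiteness of $B=\bigoplus_{n}\overline{I^n}t^n$ over $A=R[It,t^{-1}]$ (the bound $\overline{I^n}\subseteq I^{n-r}$ is equivalent to $B\subseteq t^rA$, a submodule of a Noetherian module, while finitely many homogeneous generators of $B$ give the bound back), and derive that finiteness from the Nagata property of the complete reduced local ring $\hat R$. It would be cleaner to complete first, get $\bigoplus_n\overline{(I\hat R)^n}t^n$ finite over $\hat R[I\hat Rt,t^{-1}]$ from the Nagata property, and then intersect back with $R$ using $\overline{I^n}\subseteq\overline{(I\hat R)^n}\cap R$ and $(I\hat R)^{n-r}\cap R=I^{n-r}$, rather than invoking Rees valuations and the normalized blowup, whose finiteness is usually deduced from this very theorem; but that is a matter of packaging, not correctness.

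The converse, however, has a genuine gap which you flag but do not close. You reduce ``admissibility in $R$ implies $\hat R$ reduced'' to the transfer statement $\overline{(I\hat R)^n}\subseteq(I\hat R)^{n-r}$, which you explicitly ``grant.'' Admissibility in $R$ only yields $\overline{I^n}\hat R\subseteq(I\hat R)^{n-r}$, and, as you yourself observe, the possible excess of $\overline{(I\hat R)^n}$ over $\overline{I^n}\hat R$ is caused precisely by nilpotents of $\hat R$ --- the very thing you are trying to rule out. As written the argument is circular. The standard repair avoids $\overline{(I\hat R)^n}$ entirely: given $z\in\hat R$ with $z^N=0$, choose for each $n$ an element $z_n\in R$ with $z-z_n\in I^n\hat R$. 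Expanding $z_n^N=\bigl((z_n-z)+z\bigr)^N$ and using $z^N=0$ shows $z_n^N\in I^n\hat R\cap R=I^n$, hence $z_n^N\in\bigl(I^{\lfloor n/N\rfloor}\bigr)^N$, so $z_n\in\overline{I^{\lfloor n/N\rfloor}}\subseteq I^{\lfloor n/N\rfloor-r}$ by admissibility in $R$. Therefore $z\in I^{\lfloor n/N\rfloor-r}\hat R+I^n\hat R$ for every $n$, and Krull intersection in $\hat R$ forces $z=0$. With this substitution your converse goes through, and the rest of your bookkeeping (``some versus every'') is fine.
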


Let the normal filtration of $I$ be given by $\mathcal{N}=\lbrace\overline{I^n}\rbrace_{n\in \mathbb{Z}}.$ Rees's Theorem implies that if $R$ is analytically unramified then ${\mathcal{R}}'(\mathcal{N})=\bigoplus_{n\in \mathbb{Z}}\overline{I^n}t^n$ is a finite module over $\mathcal{R}'(I).$ Therefore the normal Hilbert function $\overline{H}_I(n)=\ell(R/\overline{I^n})$ coincides with a  polynomial $\overline{P}_I(n)$ of degree $d$ for large $n,$ and is known as the \textbf{normal Hilbert polynomial of $I$.} We write
$$\overline{P} _I(n)=\overline{e}_0(I)\binom{n+d-1}{d}-\overline{e}_1(I)\binom{n+d-2}{d-1}+\cdots+(-1)^d\overline{e}_d(I).$$
Here $\overline{e}_0(I)=e(I),$ the multiplicity of $I$,  $\overline{e}_1(I)$ is known as the \textbf{normal Chern number of $I$} and the coefficients $\overline{e}_i(I)$ for $i=0,1,\ldots,d$ are called the \textbf{normal Hilbert coefficients of $I.$} In this case, we denote the blow up algebras $\mathcal{R}(\mathcal{N}),$ $\mathcal{R}'(\mathcal{N})$ and $\gr_{\mathcal{N}}(R)$ by $\overline{\mathcal{R}}(I),$ $\overline{\mathcal{R}'}(I)$ and $\overline{\gr}_I(R)$ respectively.

We now introduce the filtration $\{(I^n)^*\}$ of tight closure of powers an ideal $I.$
\begin{Definition}
For a ring $R,$ the subset of $R$ consisting of all the elements which are not contained in any minimal prime ideals of $R,$ is denoted by $R^\circ.$  
\end{Definition}

Let $R$ be a ring of prime characteristic $p.$ Let $q=p^e$ for $e\in \mathbb{N}.$ If $I=(a_1,\ldots,a_r)$ is an ideal of $R$ then  the $q^{th}$ Frobenius power of $I$ is the ideal $I^{[q]}=(a_1^q,\ldots,a_r^q).$ Notice that for any positive integer $n,$ $(I^n)^{[q]}=(I^{[q]})^n.$

\begin{Definition}\label{def_tytcl,testele}
An element $x\in R$ is said to be in the \textbf{tight closure $I^*$} of $I$ if there exists $c\in R^\circ$ such that $cx^q\in I^{[q]},$ for all sufficiently large $q=p^e.$ An element $c\in R^\circ$ is called a \textbf{test element} for $R$ if for all ideals $I$ in $R$ and $x\in I^*,$ $cx^q\in I^{[q]}$ for all $q=p^e.$\\
The \textbf{test ideal} of a ring $R,$ denoted by $\tau (R)$ is the ideal generated by the test elements of $R.$
\end{Definition}
{It is well-known that every reduced excellent local ring has a test element, see \cite[Theorem 6.1(a)]{Hoc_Hun_1994}. Taking a step forward in this direction, M. Hochster proved a result which gives test elements arising from every non-zero element of ``relative Jacobian ideal" \cite{Hoc_2004}. It is relevant for our setup as shown below.
\begin{Theorem}\cite[Corollary 8.2]{Hoc_2004}\label{Thm:Hoc_testele} Let $R$ be a domain that is module-finite over a regular domain $A$ of prime characteristic $p$ such that the extension of fraction fields is separable, then every non-zero element of $\mathcal{J}$ is a test element, where $\mathcal{J}=J(R/A)$ is the relative Jacobian ideal.
\end{Theorem}
Let $R=\frac{\mathbb{F}_p[X_1,\ldots ,X_{d+1}]}{(X_1^r+\cdots +X_{d+1}^r)}$ with $p\nmid r,$ and $x_1,\ldots,x_{d+1}$ be images of $X_1,\ldots,X_{d+1}$ respectively in $R.$ Then the extension $A=\mathbb{F}_p[x_1,\ldots,x_d] \subseteq \mathbb{F}_p[x_1,\ldots,x_{d+1}]=R$ satisfies all the assumptions of the above theorem, $x_{d+1}^{r-1}\in \mathcal{J}(R/A)$ is a test element of $R.$ By symmetry, $x_i^{r-1}\in \tau (R)$ for $1\leq i \leq d+1.$\\
Fedder and Watanabe proved the following result for reduced rings which concludes that for F-pure rings, $\tau(R)$ is radical ideal.
\begin{Proposition}\cite[Proposition 2.5]{Fed_Wat}\label{Prop_FedWat_test}
Let $R$ be a reduced ring. Let $I$ be an ideal of $R$ such that $I^{[q]}$ is contracted with respect to the Frobenius map for every $q=p^e.$ Then for any $x\in I^*,$ $J(x)=\cap_{q=p^e}(I^{[q]}:x^q)$ is a radical ideal.
\end{Proposition}
If $R$ is F-pure then it follows from the definition that every ideal is contracted with respect to the Frobenius map for every $q=p^e.$ Observe that
$$\tau(R)=\cap_{I\subset R}\left( \cap_{x \in I^*}(J(x)) \right),$$ where $J(x)$ is same as defined above. This implies that test ideal, $\tau(R)$ is radical ideal.\\}
The tight filtration of $I$ is given by $\mathcal{T}=\lbrace(I^n)^*\rbrace_{n\in \mathbb{Z}}.$ As  $(I^n)^*\subset \overline{I^n},$ for all $n\geq 1,$  $$\mathcal{R'}(I)\hookrightarrow \mathcal{R'}(\mathcal{T})\hookrightarrow \mathcal{R'}(\mathcal{N}).$$ If  $R$ is analytically unramified, $\mathcal{R}'(\mathcal{N})$ is a finite $\mathcal{R}'(I)$-module and hence it follows that $\mathcal{R}'(\mathcal{T})$ is a finite $\mathcal{R}'(I)$-module. Let $I$ be an $\m$-primary ideal of a $d$-dimensional analytically unramified local ring $R$ of prime characteristic $p,$ then the   \textbf{tight Hilbert function} defined by $H_I^*(n)=\ell(R/(I^n)^*)$ coincides with a  polynomial, $P_I^*(n)$, of degree $d$ for large $n$ with coefficients in $\mathbb{Q}$ and is known as the \textbf{tight Hilbert polynomial of $I$}. We write
$$P_I^*(n)=e_0^*(I)\binom{n+d-1}{d}-e_1^*(I)\binom{n+d-2}{d-1}+\cdots+(-1)^de_d^*(I),$$ where $e_i^*(I)\in \mathbb{Z}.$
Here $e^*_0(I)=e(I),$ the multiplicity of $I$,  $e^*_1(I)$ is known as the \textbf{tight Chern number of $I$} and the coefficients $e^*_i(I)$ for $i=0,1,\ldots,d$ are called the \textbf{tight Hilbert coefficients of $I.$}
The Strong Vanishing theorem for hypersurfaces plays an important role in computation of the tight closure for ``large primes."
\begin{Theorem}\label{SV_Hyp}\cite[Theorem 6.4]{Huneke1998}
Let $K$ be a field of characteristic $p>0.$ Let $f\in K[X_0,X_1,\ldots ,X_d]$ is quasihomogeneous and $R=K[X_0,X_1,\ldots,X_d]/(f).$ Assume that $R$ is an isolated singularity and the partial derivatives of $f$ with respect to $X_i$ forms a system of parameters in $R$ where $1\leq i \leq d.$ Further assume that $p>(d-1)(\deg (f))-\sum_{i=1}^d\deg(X_i).$
Let $y_1,y_2,\ldots,y_d$ be a homogeneous system of
parameters of degrees $a_1,a_2,\ldots,a_d.$ Set $A=a_1+\cdots+a_d$. Then
$$(y_1,y_2,\ldots,y_d)^*=(y_1,y_2,\ldots,y_d)+R_{\geq A}.$$
\end{Theorem}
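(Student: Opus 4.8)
I plan to separate the two inclusions $(y_1,\dots,y_d)+R_{\ge A}\subseteq J^*$ and $J^*\subseteq (y_1,\dots,y_d)+R_{\ge A}$, where $J=(y_1,\dots,y_d)$, and to treat them by completely different means. First I would record the structural facts that feed both halves: $R=S/(f)$ with $S=K[X_0,\dots,X_d]$ is a graded complete intersection, hence Cohen--Macaulay and Gorenstein, so $y_1,\dots,y_d$ is a regular sequence, $\bar R:=R/J$ is an Artinian Gorenstein graded ring with one-dimensional socle, and its top nonzero degree is $s=a(R)+A$, where $A=\sum_{i=1}^d a_i$ and $a(R)=\deg(f)-\sum_{i=0}^d\deg(X_i)$ is the $a$-invariant. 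The inequality on $p$ and the Jacobian/isolated-singularity hypotheses will be needed only for the second, harder inclusion.

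For the inclusion $R_{\ge A}\subseteq J^*$ I would argue purely by degrees. For each $q=p^e$ the ideal $J^{[q]}=(y_1^q,\dots,y_d^q)$ is again generated by a homogeneous system of parameters, so $R/J^{[q]}$ is Artinian Gorenstein with top nonzero degree $a(R)+qA$; consequently $R_n\subseteq J^{[q]}$ for every $n>a(R)+qA$. Now fix a homogeneous $z\in R_m$ with $m\ge A$ and choose any nonzero $c\in R$ of degree $>a(R)$ (such $c\in R^\circ$ exists since $R$ is reduced). Then $\deg(cz^q)=\deg(c)+mq>a(R)+qA$ for all $q$, so $cz^q\in J^{[q]}$, whence $z\in J^*$. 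This half uses neither the bound on $p$ nor the Jacobian condition.

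The reverse inclusion is the heart of the matter, and here I would pass to the top local cohomology module. Using the correspondence between tight closure of parameter ideals and the tight closure of zero in $H^d_\m(R)$, the regular sequence $y_1,\dots,y_d$ gives an embedding $\bar R\hookrightarrow H^d_\m(R)$ that lowers internal degrees by $A$ and under which $J^*/J$ is identified with $0^*_{H^d_\m(R)}\cap\bar R$; since $H^d_\m(R)$ is supported in degrees $\le a(R)$, it suffices to prove $0^*_{H^d_\m(R)}\subseteq H^d_\m(R)_{\ge 0}$. From the exact sequence $0\to S(-\deg f)\xrightarrow{f} S\to R\to 0$ together with $H^i_\m(S)=0$ for $i\le d$, I would identify $H^d_\m(R)$ with $\ker\bigl(f:E(-\deg f)\to E\bigr)$, where $E=H^{d+1}_\m(S)$ is the graded injective hull of $K$, and check that the $e$-th Frobenius becomes $\eta\mapsto f^{q-1}\eta^{[q]}$. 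For a nonzero homogeneous $\eta$ of negative degree I would then transport the membership $\eta\in 0^*$ into the statement that some $c\notin(f)$ satisfies $c\,f^{q-1}\in\aa^{[q]}$ for all large $q$, where $\aa=\operatorname{Ann}_S(\eta)$ is an $\m_S$-primary Gorenstein ideal containing $f$ whose socle degree exceeds $a(R)$. The \emph{main obstacle} is to rule this out: via a Fedder-type analysis of the colon ideals $(\aa^{[q]}:f^{q-1})$, I expect to show that for such $\aa$ the only possible multipliers lie in $(f)+\aa^{[q]}$, forcing $c\in(f)$ and hence $\eta\notin 0^*$. This is exactly where both remaining hypotheses enter: the isolated-singularity assumption --- through the partial derivatives, assumed to form a system of parameters and hence to generate an $\m$-primary Jacobian ideal --- provides a homogeneous test element and confines $0^*$ to finite length, while the bound $p>(d-1)\deg(f)-\sum_{i=1}^d\deg(X_i)$ is precisely what guarantees $f^{q-1}\notin\aa^{[q]}$ and pins the vanishing threshold exactly at degree $0$ rather than at a weaker value. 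Once $0^*_{H^d_\m(R)}=H^d_\m(R)_{\ge 0}$ is established, translating back through the degree-$(-A)$ shift of $\bar R\hookrightarrow H^d_\m(R)$ yields $(y_1,\dots,y_d)^*=(y_1,\dots,y_d)+R_{\ge A}$.
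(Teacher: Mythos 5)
First, a point of framing: the paper does not prove this statement at all --- Theorem \ref{SV_Hyp} is quoted verbatim from \cite[Theorem 6.4]{Huneke1998}, so your proposal can only be measured against Huneke's proof in the literature. Against that benchmark, your architecture is the right one, and two of your three pieces are correct and complete. The inclusion $R_{\geq A}\subseteq (y_1,\ldots,y_d)^*$ via the socle degree $a(R)+qA$ of the Artinian Gorenstein ring $R/(y_1^q,\ldots,y_d^q)$, with a fixed homogeneous multiplier $c$ of degree $>a(R)$, is exactly the standard argument (and you are right that it uses neither the bound on $p$ nor the Jacobian hypothesis; only note that for $c\in R^\circ$ you should invoke that $R$ is a normal domain when $d\geq 2$, or graded prime avoidance in general, rather than bare reducedness). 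Likewise, the reduction of the hard inclusion to $0^*_{H^d_\m(R)}\subseteq H^d_\m(R)_{\geq 0}$ --- through the degree-$(-A)$ embedding $R/(y)\hookrightarrow H^d_\m(R)$, the identification $H^d_\m(R)\cong \ker\bigl(f\colon E(-\deg f)\to E\bigr)$, and the Frobenius action $\eta\mapsto f^{q-1}\eta^{[q]}$ --- is correct bookkeeping, and it is precisely the ``strong vanishing'' formulation of Huneke--Smith on which Huneke's Theorem 6.4 rests.

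The genuine gap is that the entire mathematical content of the theorem is compressed into the sentence beginning ``via a Fedder-type analysis of the colon ideals $(\aa^{[q]}:f^{q-1})$ \ldots I expect to show,'' for which no argument is given. Ruling out a multiplier $c\notin (f)$ with $c\,f^{q-1}\in\aa^{[q]}$ for all large $q$ \emph{is} the theorem; everything before it is formal translation. Moreover, your guess about where the hypotheses enter is off. The bound on $p$ is not there ``to guarantee $f^{q-1}\notin\aa^{[q]}$'' --- that would only be Frobenius injectivity with $c=1$, which is weaker than what is needed, since tight closure permits an arbitrary multiplier. In Huneke's actual proof one differentiates the relations $c\,f^{q-1}\eta^{[q]}=0$ by the operators $\partial/\partial X_i$, uses that the partials $f_1,\ldots,f_d$ form a homogeneous system of parameters (so the Jacobian hypothesis drives a descending induction on $\deg c$, not merely a supply of test elements), and the inequality $p>(d-1)\deg(f)-\sum_{i=1}^d\deg(X_i)$ is exactly what keeps the binomial coefficients and degree windows in that induction from degenerating modulo $p$. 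Without this differential-operator induction, or a genuine substitute for it, your proposal establishes only $J+R_{\geq A}\subseteq J^*$ together with a correct but unproven reformulation of the reverse inclusion.
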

\begin{Definition}\cite{kod_van} An $\mathbb{N}$-graded Noetherian Cohen-Macaulay ring $(S, \m),$ over a field $S_0$ of characteristic $p > 0,$ where $\m$ is the maximal graded ideal and $\dim S = d$ is said to satisfy the \textbf{Strong Vanishing Conjecture} if $$(x_1,\ldots ,x_d)^*=(x_1,\ldots ,x_d)+S_{\geq \delta}$$ for any homogeneous system of parameters $x_1,\ldots , x_d$ of $S$ where $\delta$ is the sum of degrees of $x_1,\ldots ,x_d.$
\end{Definition}
One can use the next theorem to determine the tight closure of $\m$-primary ideals.
\begin{Theorem}\label{gentytclosure} \cite[Theorem 5.11]{kod_van}
Let $R=\bigoplus_{n \geq 0} R_n$ be a $d$-dimensional finitely generated $\mathbb{N}$-graded Cohen-Macaulay ring over a field of prime characteristic $p$. Let $\m$ be its maximal homogeneous ideal and $I$ be an $\m$-primary ideal such that $R/I$ has finite projective dimension. 
Assume that $R$ has an isolated singularity at $\m$ and let 
\begin{equation}\label{eq2}
0 \to \bigoplus_{i} R(-b_{di}) \to \cdots \to \bigoplus_{i} R(-b_{1i}) \to R \to R/I \to 0.
\end{equation}
be a minimal graded free resolution of $R/I.$
Further assume that $R$ satisfies the Strong Vanishing
Conjecture. Then
\[I+R_{\geq M} \subseteq I^* \subseteq I+R_{\geq N}\]
where $N = \min_i \{b_{di}\}$ and $M = \max_i \{b_{di}\}$. 
\end{Theorem}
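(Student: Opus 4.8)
The plan is to reduce both inclusions to a single statement about the finite-length module $R/I$ and to read the thresholds $N$ and $M$ off the top of its minimal free resolution by graded local duality, using the Strong Vanishing Conjecture (SVC) as the bridge between top-degree behaviour and tight closure. First I would fix the shape of (\ref{eq2}): since $R$ is Cohen--Macaulay of dimension $d$, $I$ is $\m$-primary so $\depth R/I=0$, and $R/I$ has finite projective dimension, the Auslander--Buchsbaum formula gives $\pd_R(R/I)=d$, so the resolution has length exactly $d$ with $F_d=\bigoplus_i R(-b_{di})$. Then I would pass to $0^*_{R/I}$, the tight closure of the zero submodule of $R/I$: since $I^*/I=0^*_{R/I}$, the claim $I+R_{\geq M}\subseteq I^*\subseteq I+R_{\geq N}$ is precisely $(R/I)_{\geq M}\subseteq 0^*_{R/I}$ together with $(0^*_{R/I})_{<N}=0$, so the theorem becomes a statement about the degrees in which the tight closure of $0$ lives.

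The conceptual heart is local duality, which is what ties the homological integers $N,M$ to the socle/top-degree nature of tight closure. Applying $\Hom_R(-,\omega_R)$ to (\ref{eq2}) and using that $R/I$ has finite length, so $\Ext^j_R(R/I,\omega_R)=0$ for $j\neq d$ while $\Ext^d_R(R/I,\omega_R)\cong(R/I)^\vee$ (the graded Matlis dual), exhibits $(R/I)^\vee$ as the cokernel of a map $\bigoplus_j\omega_R(b_{(d-1)j})\to\bigoplus_i\omega_R(b_{di})$ with entries in $\m$. Hence the minimal generators of $(R/I)^\vee$, equivalently $\Soc(R/I)$, occur in degrees governed by $\{b_{di}\}$ shifted by the generator degrees of $\omega_R$; in the Gorenstein case (the hypersurfaces of interest) this reads that the socle degrees of $R/I$ lie among $\{b_{di}+a(R)\}$, so the top degree of $R/I$ is $M+a(R)$ and the least socle degree is $N+a(R)$, where $a(R)$ is the $a$-invariant (the largest degree in which $H^d_{\m}(R)$ is nonzero). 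Note that if $a(R)<0$ then $M+a(R)<M$ forces $R_{\geq M}\subseteq I$ and the lower inclusion is vacuous, so the content lives in the top $a(R)+1$ graded pieces.

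For the lower inclusion I would push SVC through the top of the module. For a homogeneous system of parameters $(z_1,\dots,z_d)\subseteq I$ with degree sum $\delta=\sum_i\deg z_i$, the quotient $R/(z_1,\dots,z_d)$ is an Artinian complete intersection whose socle sits in degree $a(R)+\delta$, and SVC reads $0^*_{R/(z)}=(R/(z))_{\geq\delta}$, i.e. exactly the top $a(R)+1$ graded pieces are in the tight closure of $0$. Functoriality of tight closure of $0$ along the degree-preserving surjection $R/(z)\twoheadrightarrow R/I$ then yields a coarse inclusion $(R/I)_{\geq\delta}\subseteq 0^*_{R/I}$. The crux is to lower the threshold from the parameter degree $\delta$ to the sharp value $M$ dictated by the top degree $M+a(R)$ of $R/I$; in general $\delta>M$ and no system of parameters inside $I$ has degree sum below $\delta$ (for $I=(x,y)^n$ one has $\delta=2n$ but $M=n+1$), so this step cannot come from parameter ideals and must instead extract the filling directly from the last free module $F_d=\bigoplus_i R(-b_{di})$ through a Frobenius-power estimate, using the homogeneous test element $c\in\tau(R)\cap R^\circ$ supplied by the isolated singularity (as $R_{\p}$ is regular, hence strongly $F$-regular, for every $\p\neq\m$, the test ideal $\tau(R)$ is $\m$-primary).

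For the upper inclusion the same input shows $0^*_{R/I}$ is a finite-length submodule annihilated by the $\m$-primary test ideal, and its vanishing below degree $N$ is the Frobenius-power estimate dual to the lower bound: one must show that no homogeneous $u\notin I$ of degree $<N$ satisfies $cu^{q}\in I^{[q]}$ for all large $q$, the obstruction being exactly the least top-syzygy degree $N=\min_i b_{di}$ read off from the dual presentation of $(R/I)^\vee$ above. I expect the main obstacle to be the lower inclusion: securing the sharp degree $M$ rather than the parameter degree $\delta$ forces one to convert the fine degree data of $F_d$ into actual membership in tight closure, a step that neither a system of parameters inside $I$ nor a bare invocation of SVC can reach, whereas the upper inclusion is the dual-symmetric estimate anchored at $N$.
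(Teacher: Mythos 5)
The paper offers no proof of this statement at all: it is imported verbatim from \cite[Theorem 5.11]{kod_van}, so your proposal has to stand on its own against the source's argument, and as written it has a genuine gap — neither inclusion is actually proved. Your framework is sound: $\pd_R(R/I)=d$ by Auslander--Buchsbaum, the identification $I^*/I=0^*_{R/I}$, and the duality bookkeeping (socle degrees of $R/I$ among $\{b_{di}+a(R)\}$ in the Gorenstein case, top degree $M+a(R)$) are all correct, and your example $I=(x,y)^n$ rightly shows the parameter-ideal route caps out at the degree sum $\delta$ rather than the sharp $M$. But that is where the proposal stops: the lower inclusion ends with your own admission that the sharp threshold ``must instead'' come from an unexecuted Frobenius-power estimate, and the upper inclusion is likewise only restated (``one must show that no homogeneous $u\notin I$ of degree $<N$ satisfies $cu^q\in I^{[q]}$'') without an argument. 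Those two estimates are the entire content of the theorem.

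Moreover, your plan aims the hypotheses at the wrong inclusions. The lower inclusion needs no Strong Vanishing at all: since $\pd_R(R/I)<\infty$, Peskine--Szpiro acyclicity shows that applying the Frobenius functor to the resolution \eqref{eq2} yields a resolution of $R/I^{[q]}$ with top term $\bigoplus_i R(-qb_{di})$, whence $(R/I^{[q]})_n=0$ for all $n>qM+a(R)$; picking a homogeneous $c\in R^\circ$ with $\deg c>a(R)$ then gives $cu^q\in I^{[q]}$ for \emph{every} $q$ whenever $\deg u\geq M$, i.e. $R_{\geq M}\subseteq I^*$ directly. The Strong Vanishing hypothesis is instead the engine of the \emph{upper} inclusion, via linkage: take a homogeneous parameter ideal $\mathfrak{z}\subseteq I$ with degree sum $\delta$ and set $J=\mathfrak{z}:I$; then $JI^*\subseteq (JI)^*\subseteq \mathfrak{z}^*=\mathfrak{z}+R_{\geq\delta}$ by Strong Vanishing, while the duality you already set up shows $J/\mathfrak{z}\cong\Hom_R(R/I,R/\mathfrak{z})$ is generated in degrees $\delta-b_{di}$. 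Hence a homogeneous $u\in I^*$ of degree $<N$ multiplies each such generator into degree $<\delta$, forcing $uJ\subseteq\mathfrak{z}$ and so $u\in\mathfrak{z}:J=I$ by double linkage (with the $\omega_R$-twisted version of this computation in the non-Gorenstein Cohen--Macaulay case). Without the Peskine--Szpiro degree count at $M$ and some such mechanism converting Strong Vanishing for parameter ideals into the bound at $N$, your proposal remains a correct reduction with both load-bearing steps missing.
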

\begin{Remark}
Let $R=\mathbb{F}_p[X_1,X_2,\ldots ,X_{d+1}]/(X_1^r+X_2^r+\cdots +X_{d+1}^r),$ $I=(x_1,\ldots ,x_d)$ and $\m=(I,x_{d+1}).$ Then $\ell \left( \frac{R}{(I^n)^*} \right)=\ell \left( \frac{R_{\m}}{(I^nR_{\m})^*} \right),$ where the equality follows from the fact that localisation commutes with the tight closure operation for $I^n$ \cite[Corollary 3.2] {Ab_94}. Therefore the tight Hilbert polynomial of $I$ is same as that of $I_{\m}.$
\end{Remark}
\begin{Theorem}\cite[Theorem A2.60]{EisenbudGeomSyzygies}
	Let $R$ be a ring and $\alpha:F=R^f\rightarrow G=R^g$ where $f\geq g$. The Eagon-Northcott complex of a map $\alpha$ is given by
	\begin{align*}
	0\rightarrow (\sym_{f-g}G)^*\otimes \wedge^f F\xrightarrow{d_{f-g+1}} \cdots\rightarrow (\sym_2 G)^*\otimes \wedge^{g+2}F\xrightarrow{d_3}G^*\otimes \wedge^{g+1}F\xrightarrow{d_2}\wedge^gF\xrightarrow{\wedge^g\alpha}\wedge^g G.
	\end{align*}
The above complex is exact if and only if $\text{grade }I_g(\alpha)=f-g+1$.
\end{Theorem}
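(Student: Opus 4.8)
The plan is to prove this via the Buchsbaum--Eisenbud acyclicity criterion, which reduces exactness of a finite free complex to a rank condition together with a depth condition on the ideals of minors of the differentials. Write the displayed complex as $0\to L_{n}\xrightarrow{d_{n}}\cdots\xrightarrow{d_2}L_1\xrightarrow{d_1}L_0$ with $n=f-g+1$, where $L_0=\wedge^gG\cong R$, $L_1=\wedge^gF$, and $L_k=(\sym_{k-1}G)^*\otimes\wedge^{g+k-1}F$ for $k\geq 2$, so that $d_1=\wedge^g\alpha$ has image $I_g(\alpha)$ and the complex augments to $R/I_g(\alpha)$. Setting $r_k=\rank L_k-\rank L_{k+1}+\cdots\pm\rank L_n$, the criterion asserts that the complex is exact precisely when $\rank d_k=r_k$ and $\depth I_{r_k}(d_k)\geq k$ for every $k$.

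First I would localize at a prime $\pp\notin V(I_g(\alpha))$. There some maximal minor of $\alpha$ is a unit, so $\alpha_{\pp}$ is a split surjection; since the Eagon--Northcott construction is functorial and commutes with localization, and a split surjection makes the complex split exact (it resolves the zero module), the localized complex is exact. Two consequences follow. First, the generic rank of each $d_k$ is forced to equal $r_k$: at such a $\pp$ the differential has rank $r_k$, so $\rank d_k\geq r_k$, while $d_kd_{k+1}=0$ forces $\rank d_k+\rank d_{k+1}\leq\rank L_k$, and together these give equality, verifying the rank condition (a routine binomial identity confirms that the $r_k$ are the ranks of the stated free modules). Second, since $d_{k,\pp}$ has a free image of rank $r_k$ as a direct summand, $I_{r_k}(d_k)_{\pp}=R_{\pp}$, whence $V(I_{r_k}(d_k))\subseteq V(I_g(\alpha))$ for all $k$.

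These facts already settle the ``if'' direction cleanly. Using $\grade J=\inf\{\depth R_{\qq}:\qq\in V(J)\}$, the inclusion $V(I_{r_k}(d_k))\subseteq V(I_g(\alpha))$ gives $\grade I_{r_k}(d_k)\geq\grade I_g(\alpha)=f-g+1\geq k$, so every depth condition holds and the criterion yields exactness. For the converse, exactness forces $\depth I_{r_n}(d_n)\geq n=f-g+1$; to transfer this to $I_g(\alpha)$ I would prove the reverse inclusion $V(I_g(\alpha))\subseteq V(I_{r_n}(d_n))$, giving $\grade I_g(\alpha)\geq\grade I_{r_n}(d_n)\geq f-g+1$, and then invoke the classical codimension bound $\grade I_g(\alpha)\leq\height I_g(\alpha)\leq f-g+1$ for maximal minors of a $g\times f$ matrix to conclude equality.

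I expect the reverse inclusion $V(I_g(\alpha))\subseteq V(I_{r_k}(d_k))$ to be the main obstacle, since it amounts to showing that every differential of the complex genuinely drops rank wherever $\alpha$ does. The entries of the $d_k$ are built from the entries of $\alpha$ by combining the comultiplication on the symmetric powers with contraction against $\alpha$, and the cleanest way to control their minors is to reduce modulo a prime $\pp\in V(I_g(\alpha))$ and show that the residual complex over $\kappa(\pp)$ fails to attain the generic ranks; equivalently, one passes to the universal case over $\ZZ[x_{ij}]$ with $\alpha$ the generic matrix, where $\grade I_g(\alpha)=f-g+1$ holds and the equality $\radical I_{r_k}(d_k)=\radical I_g(\alpha)$ can be checked once and for all, and then specializes. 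This radical equality is the one genuinely computational input; everything else is formal, and note that for $k=1$ it is automatic since $I_{r_1}(d_1)=I_g(\alpha)$ on the nose.
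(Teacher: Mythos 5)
The paper itself offers no proof of this statement: it is quoted verbatim from Eisenbud's book (Theorem A2.60) and used as a black box, so there is no internal argument to compare yours against. Your route is in any case the standard one for this theorem: apply the Buchsbaum--Eisenbud acyclicity criterion, obtain the rank conditions and the inclusion $V(I_{r_k}(d_k))\subseteq V(I_g(\alpha))$ from split-exactness of the localized complex at primes outside $V(I_g(\alpha))$, deduce the ``if'' direction from $\grade J=\inf\{\depth R_{\qq}:\qq\in V(J)\}$, and get the converse from the $k=n$ depth condition together with the bound $\grade I_g(\alpha)\leq \height I_g(\alpha)\leq f-g+1$ for maximal minors. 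That architecture is sound.

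The one place where you record an obligation rather than a proof is exactly the step you flag: the inclusion $V(I_g(\alpha))\subseteq V(I_{r_n}(d_n))$ (only $k=n$ is needed for the converse). You propose to verify a radical equality in the universal case ``once and for all'' but never do it, and without it the converse is incomplete. Fortunately it closes directly, with no generic-case computation: if $\pp\supseteq I_g(\alpha)$, then $\alpha\otimes\kappa(\pp)$ has rank $<g$, so after a change of basis its $g$-th row is zero; the differential $d_n$ sends $\phi\otimes\omega$ to $\sum_{j=1}^{g}(\phi\text{ contracted against }e_j)\otimes(\text{row}_j(\alpha)\text{ contracted into }\omega)$, so the element $(e_g^{\,f-g})^{*}\otimes\omega$ with $\omega$ a generator of $\wedge^{f}F$ is killed by $d_n\otimes\kappa(\pp)$ (each summand vanishes: for $j\neq g$ the contraction of $(e_g^{\,f-g})^{*}$ is zero, and for $j=g$ the row is zero). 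Hence $d_n\otimes\kappa(\pp)$ is not injective and every $r_n\times r_n$ minor of $d_n$ lies in $\pp$. Two smaller caveats: the split-exactness of the Eagon--Northcott complex of a split epimorphism, which you also assert without argument, needs a word (an explicit contracting homotopy, or induction after splitting a free summand off the kernel); and the ``only if'' direction is literally false when $I_g(\alpha)=R$, where the complex is split exact but the grade is infinite, so the statement tacitly assumes $I_g(\alpha)\neq R$ --- your appeal to $\height I_g(\alpha)\leq f-g+1$ needs that hypothesis as well.
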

We refer the reader to \cite[Appendix A2H]{EisenbudGeomSyzygies} for information on the above complex and the description of the maps appearing in the complex. The main feature that we use in this article is the linearity of the maps $d_i,i\geq 2$. This can be realized from the description of the maps (see for example \cite[Example A2.69, Example A2.68]{EisenbudGeomSyzygies} and \cite[Appendix A2.6.1]{EisenbudCommAlg}). 

In a local ring $(R,\m)$ and an ideal $I$ generated by a regular sequence, the power $I^t$ can be resolved by the Eagon-Northcott complex (\cite[Page 15, after Remark 2.13]{BrunsVetter88} and the above theorem).

\section{Hilbert polynomial for the filtration $\{I_n\}_{n\geq 0},$\\ where $I_0=R$ and $I_n=I^n+\m^{n+t}$ for $n\geq 1$} \label{sec_HPfilt}
In the subsequent sections, we will show that in most of the cases we consider, the tight closure of powers of a linear system of parameters  in diagonal hypersurfaces takes a special form. Therefore computing the Hilbert polynomial in such cases becomes crucial.\\
Let $R=K[X_1,X_2,\ldots,X_{d+1}]/(X_1^r+X_2^r+\cdots+X_{d+1}^r),$ $I=(x_1,x_2,\ldots, x_d)$ and $\m=(I,x_{d+1}),$ where $K$ is a field. Let $\mathcal{F}=\{I_n\}_{n\geq 0},$ where $I_n=\{I^n+\m^{n+t}\}_{n\geq 0}$ for fixed $t\in \mathbb{N}.$ Then we calculate the Hilbert polynomial $P_{\mathcal{F}}(n).$
\begin{Proposition}\label{HP_t}
Let $T=K[X_1,X_2,\ldots,X_{d+1}]$ and $R=T/(f)$ where $K$ is an infinite field and $f\in T$ is a homogeneous square-free polynomial of degree $r\geq 2.$ Let $I$ be a minimal reduction of $\m=(x_1,\ldots ,x_{d+1})R.$ Let $\mathcal{F}=\{I_n\}_{n\geq 0},$ where $I_0=R,$ $I_n=I^n+\m^{n+t}$ for all $n\geq 1$ and for some fixed $t\in \mathbb{N}.$ Then\\
{\rm (1)} $\gr_{\mathcal{F}}(R)$ is Cohen-Macaulay.\\
{\rm (2)} If $r\leq t+1$ then $I_n=I^n$ for all $n$ and $P_{\mathcal{F}}(n)=r\binom{n+d-1}{d}.$\\
{\rm (3)} If $r\geq t+2$ then
$$H(\gr_{\mathcal{F}}(R),\lambda)=\frac{(t+1)+\l+\cdots+\l^{r-t-1}}{(1-\l)^{d}},$$ $$e_j(\mathcal{F})=\binom{r-t}{j+1} \text{ for } 1\leq j \leq r-t-1 \text{ and } e_j(\mathcal{F})=0 \text{ for }r-t \leq j \leq d.$$ Also $\eta(\mathcal{F})=r-t-d-1$ and $r(\mathcal{F})=r-t-1.$\\
\end{Proposition}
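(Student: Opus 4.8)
The plan is to exploit the standard graded structure of $R$, for which $\m^m=R_{\ge m}$, together with the fact that a minimal reduction $I$ of $\m$ is generated by $d$ general linear forms $\ell_1,\dots,\ell_d$ that form a regular sequence on the Cohen--Macaulay ring $R$. First I would record the base facts: killing $d$ general linear forms collapses $R$ onto $K[Y]/(Y^r)$, so $R/I$ has Hilbert series $1+\lambda+\cdots+\lambda^{r-1}$, $e(I)=r$, the reduction number of $\m$ with respect to $I$ equals $r-1$ (hence $\m^m=I\m^{m-1}$ for $m\ge r$), and $I\cap\m^m=I\m^{m-1}$ for all $m$ (Valabrega--Valla applied to the $\m$-adic filtration, whose associated graded ring is $R$ itself). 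Part (2) is then immediate: if $r\le t+1$ then $n+t-(r-1)\ge n$, whence $\m^{n+t}=I^{\,n+t-r+1}\m^{r-1}\subseteq I^n$ and $I_n=I^n$; the filtration is $I$-adic with $I$ a regular sequence, so $\gr_{\mathcal F}(R)\cong(R/I)[T_1,\dots,T_d]$ is Cohen--Macaulay and $\ell(R/I^n)=r\binom{n+d-1}{d}$.

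For the Cohen--Macaulayness in part (1) I would verify the Valabrega--Valla criterion for the minimal reduction $I$ of the filtration $\mathcal F$ (note $I\,I_n=I_{n+1}$ for $n\ge r-t-1$, using $\m^{n+t+1}=I\m^{n+t}$), namely that $I\cap I_n=I\,I_{n-1}=I^n+I\m^{\,n+t-1}$ for all $n\ge1$. The inclusion $\supseteq$ is clear. For $\subseteq$, write $x=a+b$ with $x\in I$, $a\in I^n$, $b\in\m^{n+t}$; then $b=x-a\in I\cap\m^{n+t}=I\m^{\,n+t-1}$ by the base fact, so $x\in I^n+I\m^{\,n+t-1}$. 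Since $R$ is Cohen--Macaulay and $I$ is generated by a regular sequence forming a minimal reduction, this intersection identity forces the initial forms of $\ell_1,\dots,\ell_d$ to be a regular sequence in the $d$-dimensional ring $\gr_{\mathcal F}(R)$, which is therefore Cohen--Macaulay (the same identity covers case (2), where $I_n=I^n$ makes it trivial).

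The heart of the matter is the Hilbert series in part (3), which I would extract from the internal grading of $R$. Because $\m^{n+t}=R_{\ge n+t}$, the ideal $I_n=I^n+R_{\ge n+t}$ agrees with $I^n$ in internal degrees $<n+t$ and is all of $R$ in degrees $\ge n+t$; comparing $I_n$ and $I_{n+1}$ degree by degree shows $(I_n/I_{n+1})_j=(I^n/I^{n+1})_j$ for $n\le j\le n+t-1$ and $(I_n/I_{n+1})_{n+t}=(R/I^{n+1})_{n+t}$, with all other graded pieces zero. Using $\gr_I(R)\cong(R/I)[T_1,\dots,T_d]$ one has $\dim_K(I^n/I^{n+1})_j=\binom{n+d-1}{d-1}$ for $n\le j\le n+r-1$, giving $t\binom{n+d-1}{d-1}$ from the first range (here $t\le r-2$), while a hockey-stick summation of the Hilbert series of $R/I^{n+1}$ yields $\dim_K(R/I^{n+1})_{n+t}=\binom{n+d}{d}-\binom{n+t-r+d}{d}$ (the second binomial vanishing for $n<r-t$). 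Summing
$$\ell(I_n/I_{n+1})=t\binom{n+d-1}{d-1}+\binom{n+d}{d}-\binom{n+t-r+d}{d}$$
against $\lambda^n$ and using $\sum_n\binom{n+d-1}{d-1}\lambda^n=(1-\lambda)^{-d}$, $\sum_n\binom{n+d}{d}\lambda^n=(1-\lambda)^{-(d+1)}$ and the shift $\sum_{n\ge r-t}\binom{n+t-r+d}{d}\lambda^n=\lambda^{r-t}(1-\lambda)^{-(d+1)}$ produces the numerator $t(1-\lambda)+1-\lambda^{r-t}=\big((t+1)+\lambda+\cdots+\lambda^{r-t-1}\big)(1-\lambda)$, i.e. the claimed $H(\gr_{\mathcal F}(R),\lambda)=\dfrac{(t+1)+\lambda+\cdots+\lambda^{r-t-1}}{(1-\lambda)^d}$. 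The main obstacle is precisely this degree-by-degree bookkeeping: keeping the range $n\le j\le n+t-1$ and the two hockey-stick regimes ($n<r-t$ versus $n\ge r-t$) straight so that the telescoping in the generating function is exact.

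Finally, writing the $h$-polynomial as $h(\lambda)=\sum_k h_k\lambda^k$ with $h_0=t+1$ and $h_k=1$ for $1\le k\le r-t-1$, the standard formula $e_j(\mathcal F)=\sum_{k\ge j}\binom{k}{j}h_k$ together with $\sum_{k=j}^{r-t-1}\binom{k}{j}=\binom{r-t}{j+1}$ gives $e_j(\mathcal F)=\binom{r-t}{j+1}$ for $1\le j\le r-t-1$ and $e_j(\mathcal F)=0$ for $r-t\le j\le d$. The postulation number is read off from $\sum_n\ell(R/I_n)\lambda^n=\lambda h(\lambda)(1-\lambda)^{-(d+1)}$: since the numerator $\lambda h(\lambda)$ has degree $r-t$ with nonzero leading coefficient, $P_{\mathcal F}(n)=H_{\mathcal F}(n)$ holds exactly for $n>r-t-d-1$, so $\eta(\mathcal F)=r-t-d-1$. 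As $\gr_{\mathcal F}(R)$ is Cohen--Macaulay, the reduction number with respect to any minimal reduction equals $\deg h(\lambda)=r-t-1$, so $r(\mathcal F)=r-t-1$ (consistently with $\eta(\mathcal F)=r(\mathcal F)-d$).
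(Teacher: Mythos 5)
Your proof is correct. Parts (1) and (2) follow essentially the paper's own route: the Valabrega--Valla criterion reduced to $I\cap\m^{m}=I\m^{m-1}$ (which both of you get from $\gr_{\m}(R)\cong R$ being Cohen--Macaulay), and $r(\m)=r-1$ giving $I_n=I^n$ when $r\le t+1$. Where you genuinely diverge is part (3). The paper argues by induction on $d$: it computes the base case $d=0$ by hand, then uses the Cohen--Macaulayness from part (1) to see that $x_1^*$ is a nonzerodivisor on $\gr_{\mathcal F}(R)$, passes to $R/(x_1)$ where the filtration has the same shape, and divides by one more factor of $(1-\lambda)$. You instead compute $\ell(I_n/I_{n+1})$ directly from the internal grading, splitting $(I_n/I_{n+1})_j$ into the range $n\le j\le n+t-1$ (where it coincides with $(I^n/I^{n+1})_j$, read off from $\gr_I(R)\cong (R/I)[T_1,\dots,T_d]$) and the single degree $j=n+t$ (where it is $(R/I^{n+1})_{n+t}$), and then assemble the generating function; I checked that your numerator $t(1-\lambda)+1-\lambda^{r-t}$ does factor as $\bigl((t+1)+\lambda+\cdots+\lambda^{r-t-1}\bigr)(1-\lambda)$, so the series, the $e_j$'s, $\eta(\mathcal F)$ and $r(\mathcal F)$ all come out right. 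Your route has the merit of not using part (1) as an input to part (3) and of exhibiting the $h$-vector explicitly, at the cost of the two-regime hockey-stick bookkeeping that the induction avoids. Two small points to make explicit: fix the convention $\binom{a}{d}=0$ for every $a<d$, including negative $a$, so that $\binom{n+t-r+d}{d}$ genuinely vanishes for all $n<r-t$; and record, as the paper does, that $\mathcal F$ is an $I$-admissible filtration (via $I^n\subseteq I_n\subseteq\overline{I^n}=\m^n$ and Rees's theorem), since that is what licenses both the Valabrega--Valla criterion and the existence of $P_{\mathcal F}$. Your reduction to $I$ generated by $d$ general linear forms is at the same level of rigor as the paper's own ``we may assume $I=(x_1,\dots,x_d)$.''
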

\begin{proof}
We note that $\mathcal{F}$ is an $I$-admissible filtration. Since $f$ is square-free in $T,$ $R_{\m}$ is reduced and analytically unramified. Clearly $I_n\subseteq \overline{I^n}=\m ^n$ which implies that
$$\mathcal{R'}(I)\hookrightarrow \mathcal{R'}(\mathcal{F})\hookrightarrow \overline{\mathcal{R'}}(I).$$ As $R_{\m}$ is analytically unramified, $\overline{\mathcal{R'}}(I)$ is a finite $\mathcal{R'}(I)$-module so that $\mathcal{R'}(\mathcal{F})$ is a finite $\mathcal{R'}(I)$-module making $\mathcal{F}$ an $I$-admissible filtration.\\
{\rm (1)} Since $R$ is Cohen-Macaulay, by Valabrega-Valla Theorem \cite[Proposition 3.5]{HM}, it is enough to show that $I \cap I_n =II_{n-1}$ for $n\geq 1.$
Notice that $I\cap \m ^n=I \m^{n-1}$ for $n\geq 1$ as $\gr_{\m}(R)\cong R$ is Cohen-Macaulay. This implies that for all $n\geq 1,$
$$I \cap I_n =I^n+I\cap \m^{n+t}=I^n+I\m^{n+t-1}=II_{n-1}.$$ Therefore $\gr_{\mathcal{F}}(R)$ is Cohen-Macaulay.\\
{\rm (2)} Let $r\leq t+1.$ We will show that $r(\m)=r-1.$ The Hilbert series of $R$ is given by $$H(R,\l)=\frac{1-\l ^r}{(1-\l)^{d+1}}=\frac{1+\l +\cdots +\l ^{r-1}}{(1-\l)^d}.$$ Since $\gr_{\m}(R)$ is Cohen-Macaulay and $\eta(\m)=r-1-d,$ we have $r(\m)=r-1$ \cite[Corollary 5.7]{marthesis}.\\
Let $r\leq t+1.$ Since $r(\m)=r-1,$ for $n\geq 1,$ $$I_n=I^n+\m^{n+t}=I^n+I^n\m ^t=I^n.$$ This implies that $\mathcal{F}$ is the $I$-adic filtration and for $n\geq 1,$ $$H_{\mathcal{F}}(n)=P_{\mathcal{F}}(n)=r\binom{n+d-1}{d}.$$\\
{\rm (3)} Let $r\geq t+2.$ Apply induction on $d.$ 
If $d=0$ then $R=\frac{K[X_1]}{(X_1^r)},$ $I=(0),$ $\m=(x_1)$ and $\mathcal{F}=\{I_n\}_{n\geq 0}$ where $I_0=R$ and $I_n=\m ^{n+t}$ for $n\geq 1.$ In this case $\gr_{\mathcal{F}}(R)=\frac{R}{\m ^{t+1}}\bigoplus_{n\geq 1}\frac{\m^{n+t}}{\m^{n+t+1}}.$ This implies that
\begin{align*}
H(\gr_{\mathcal{F}}(R),\lambda)&=\sum_{n\geq 0}\ell ([\gr_{\mathcal{F}}(R)]_n)\l^n\\
&=\ell\left(\frac{R}{\m^{t+1}} \right)+\ell\left(\frac{\m^{t+1}}{\m^{t+2}} \right)\l +\cdots +\ell\left(\frac{\m^{r-1}}{\m^{r}} \right)\l^{r-1-t}.
\end{align*}
Note that $\frac{R}{\m^{t+1}}\cong \frac{K[X_1]}{(X_1^{t+1})}$ since $r\geq t+2\geq t+1.$ Therefore $\ell\left(\frac{R}{\m^{t+1}} \right)=t+1.$ If $1\leq i \leq r-1-t$ then $$\frac{R}{\m^{t+i}}\cong \frac{K[X_1]}{(X_1^{t+i})}\text{ and } \frac{R}{\m^{t+i+1}}\cong \frac{K[X_1]}{(X_1^{t+i+1})}.$$
This implies that $\ell \left(\frac{\m^{t+i}}{\m^{t+i+1}}\right)=1$ for $1\leq i \leq r-t-1.$ Therefore $$H(\gr_{\mathcal{F}}(R),\lambda)=(t+1)+\l+\cdots+\l^{r-t-1}.$$ Let $d\geq 1.$ After a linear change of coordinates, we may assume that $f$ is monic in $X_{d+1}.$ Since $f$ is monic in $X_{d+1},$ $(x_1,\ldots, x_d)$ is a minimal reduction of $\m.$ Thus we may assume that $I=(x_1,\ldots ,x_d).$ Clearly $x_1^*$ is a nonzerodivisor on $\gr_{\mathcal{F}}(R)$ so that $$\frac{\gr_{\mathcal{F}}(R)}{x_1^* \gr_{\mathcal{F}}(R)}\cong \gr_{\mathcal{F}/(x_1)}(R/(x_1)).$$ Let $S=R/(x_1),$ $J=IS$ and $\Gamma=\{J_n\}_{n\geq 0}$ where $J_0=S$ and $J_n=J^n+\mathfrak{n}^{n+t}$ for $n\geq 1$ where $\n$ is the unique maximal homogeneous ideal in the $(d-1)$-dimensional ring $S.$ Notice that the filtration $\Gamma$ is same as the filtration $\mathcal{F}/(x_1)=\{L_n\}_{n\geq 0}$ in $S.$ Indeed for $n\geq 1,$ $J_n=J^n+\n ^{n+t}=\frac{I^n+\m ^{n+t}+(x_1)}{(x_1)}$ and $L_n=\frac{I_n+(x_1)}{(x_1)}=\frac{I^n+\m ^{n+t}+(x_1)}{(x_1)}.$ Also $J_0=S$ and $L_0=\frac{I_0+(x_1)}{(x_1)}=R/(x_1)=S.$
By induction hypothesis, $$H(\gr_{\Gamma}(S),\lambda)=\frac{(t+1)+\l+\cdots+\l^{r-t-1}}{(1-\l)^{d-1}}.$$ Since $x_1^*$ is a nonzerodivisor of $\gr_{\mathcal{F}}(R),$ it follows that $$H(\gr_{\mathcal{F}}(R),\lambda)=\frac{(t+1)+\l+\cdots+\l^{r-t-1}}{(1-\l)^{d}}.$$ Let $h(\l)=(t+1)+\l+\cdots+\l^{r-t-1}.$ Then by \cite[Proposition 4.1.9]{BH} $e_j(\mathcal{F})=\frac{h^{(j)}(1)}{j!}.$ Note that if $r-t\leq j\leq d$ then $e_j(\mathcal{F})=0.$ 
Let $1\leq j \leq r-t-1$ then $h^{(j)}(\l)=\sum_{i\geq j}^{r-t-1}i(i-1)\cdots (i-(j-1))\l^{i-j}.$ Therefore for $1\leq j \leq r-t-1,$ $$e_j(\mathcal{F})=\sum_{i\geq j}^{r-t-1}\frac{i(i-1)\cdots (i-(j-1))}{j!}=\sum_{i\geq j}^{r-t-1}\binom{i}{j}=\binom{r-t}{j+1}.$$ Since $\gr_{\mathcal{F}}(R)$ is Cohen-Macaulay, $\eta(\mathcal{F})=r-t-d-1$ and $r(\mathcal{F})=r-t-1.$
\end{proof}
\section{The Tight Hilbert polynomial for diagonal hypersurfaces}\label{sec_THP_diag_Hy}
The purpose of this section is to find the tight Hilbert polynomial in certain diagonal hypersurface rings. As evidenced in \cite{GMV2019}, this will enable us to detect $F$-rationality of such rings. The first proposition helps us to compute the tight Hilbert polynomial of a system of parameters generated by test elements in excellent reduced Cohen-Macaulay rings. In fact, it also provides information about the various blow up algebras associated with the tight closure filtration in this case. It is an easy consequence of \cite[Lemma 3.1]{AHS}.
\begin{Proposition}\label{ahsappl}
Let $(R,\m)$ be $d$-dimensional excellent reduced Cohen-Macaulay local ring. Let $x_1,x_2,\ldots,x_d$ be test elements and  $Q=(x_1,x_2,\ldots,x_d)$ be $\m$-primary.Then\\
{\rm (1) }$(Q^n)^*=Q(Q^{n-1})^*$ for all $n\geq 2.$\\
{\rm (2) }$e_0(Q)-e_1^*(Q)=\ell(R/Q^*).$\\
{\rm (3) }$\ell(R/(Q^n)^*)=e_0(Q)\binom{n+d-1}{d}-e_1^*(Q)\binom{n+d-2}{d-1}$ for all $n\geq 1.$ \\
{\rm (4) } $G^*(Q)=\bigoplus_{n\geq 0}(Q^n)^*/(Q^{n+1})^*$ and $\mathcal{R}^*(Q)=\bigoplus_{n\geq 0}(Q^n)^*t^n$ are Cohen-Macaulay.
\end{Proposition}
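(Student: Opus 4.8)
The plan is to reduce the whole statement to part (1), the identity $(Q^n)^*=Q(Q^{n-1})^*$, and then to extract (2)--(4) by the same Hilbert-series bookkeeping already used in Proposition~\ref{HP_t}. First I would record the standing reductions. An excellent reduced local ring is analytically unramified, so by the discussion preceding this proposition the tight filtration $\mathcal{T}=\{(Q^n)^*\}$ is $Q$-admissible and $\mathcal{R}'(\mathcal{T})$ is module-finite over $\mathcal{R}'(Q)$; in particular $P_Q^*$ exists, and since $Q^n\subseteq(Q^n)^*\subseteq\overline{Q^n}$ while the $Q$-adic and normal filtrations share the multiplicity $e(Q)$, a length squeeze gives $e_0(\mathcal{T})=e(Q)=e_0(Q)$. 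Because $x_1,\dots,x_d$ is an $\m$-primary system of parameters in a Cohen-Macaulay ring, it is a regular sequence, so $Q$ is a parameter ideal generated by a regular sequence.

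For (1) one inclusion is formal: from the product rule $Q^*(Q^{n-1})^*\subseteq(Q\cdot Q^{n-1})^*=(Q^n)^*$ (the product of two elements of $R^\circ$ stays in $R^\circ$ since minimal primes are prime) together with $Q\subseteq Q^*$, we get $Q(Q^{n-1})^*\subseteq(Q^n)^*$. The reverse inclusion $(Q^n)^*\subseteq Q(Q^{n-1})^*$ is the crux, and is exactly where the hypothesis that the generators $x_i$ are test elements is used; I would obtain it by applying \cite[Lemma 3.1]{AHS} to the regular sequence $x_1,\dots,x_d$ after verifying its hypotheses (Cohen-Macaulay, $x_i$ test elements, $Q$ being $\m$-primary). \emph{This is the main obstacle.} The containment cannot be seen from integral closure alone: Briançon-Skoda only gives $(Q^n)^*\subseteq\overline{Q^n}\subseteq Q^{n-d+1}$, which is vacuous once $d\ge 3$, so the statement genuinely requires the test-element/Frobenius input packaged in that lemma.

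Granting (1), part (4) follows as in Proposition~\ref{HP_t}. Since $(Q^n)^*=Q(Q^{n-1})^*\subseteq Q$ for $n\ge 2$ (and trivially $Q\cap Q^*=Q$ for $n=1$), we have $Q\cap(Q^n)^*=Q(Q^{n-1})^*$ for all $n\ge 1$, so the Valabrega-Valla criterion \cite[Proposition 3.5]{HM} shows $G^*(Q)=\gr^*_Q(R)$ is Cohen-Macaulay. Moreover (1) says $Q I_{n-1}=I_n$ for $n\ge 2$, i.e. the reduction number of $\mathcal{T}$ with respect to $Q$ is at most $1$; hence the $a$-invariant of $\gr^*_Q(R)$ is $r(\mathcal{T})-d\le 1-d<0$ for $d\ge 2$, and the Ikeda-Trung criterion relating Cohen-Macaulayness of the Rees algebra to that of the associated graded ring and the sign of the $a$-invariant yields that $\mathcal{R}^*(Q)$ is Cohen-Macaulay; the case $d=1$ is handled directly.

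Finally, (2) and (3) are pure bookkeeping. Cohen-Macaulayness of $\gr^*_Q(R)$ together with reduction number $\le 1$ forces $H(\gr^*_Q(R),\lambda)=\frac{h_0+h_1\lambda}{(1-\lambda)^d}$, where $h_0=\ell([\gr^*_Q(R)]_0)=\ell(R/Q^*)$. Reading off coefficients gives $e_0(Q)=h_0+h_1$, $e_1^*(Q)=h_1$, and $e_i^*(Q)=0$ for $i\ge 2$; the first two relations give $e_0(Q)-e_1^*(Q)=h_0=\ell(R/Q^*)$, which is (2), and the vanishing of the higher coefficients collapses $P_Q^*$ to the two-term expression in (3). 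Since the postulation number equals the $a$-invariant $\le 1-d\le 0$, the tight Hilbert function agrees with $P_Q^*(n)$ for all $n\ge 1$ (a check at $n=1$ gives $e_0(Q)-e_1^*(Q)=\ell(R/Q^*)$, consistent), completing (3).
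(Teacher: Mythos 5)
Your proposal is correct and follows essentially the same route as the paper: the crux of (1) is in both cases the citation of \cite[Lemma 3.1]{AHS} (the paper uses it in the form $(Q^n)^*=Q^{n-1}Q^*$ and then rearranges), and parts (2)--(4) are exactly the content of the Huneke--Ooishi theorem, which the paper simply cites and you instead unpack via Valabrega--Valla, the reduction number bound $r_Q(\mathcal{T})\le 1$, and the resulting Hilbert series $\frac{h_0+h_1\lambda}{(1-\lambda)^d}$. Your expanded bookkeeping is accurate, so the only difference is one of exposition, not of method.
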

\begin{proof}
By \cite[Lemma 3.1]{AHS}, $(Q^n)^*=Q^{n-1}Q^*.$ This implies that $$(Q^n)^*\subseteq Q((Q^{n-2})^*Q^*)^* =Q(Q^{n-1})^*\subseteq (Q^n)^*\text{ for all } n\geq 2.$$ This proves {\rm (1)}. The remaining parts follow by Huneke-Ooishi Theorem \cite{H1987}.
\end{proof}
In the next theorem, we demonstrate the use of Strong Vanishing Theorem for computing the tight Hilbert polynomial. It is well known that the Strong Vanishing Theorem helps in computation of the tight closure of an ideal generated by system of parameters for rings with ``large" prime characteristic. However to find the tight Hilbert polynomial, we need to compute the tight closure of powers of such ideals. For this, we use Theorem \ref{gentytclosure} which ensures a bound for tight closure of $\m$-primary ideals once we have its projective resolution. This resolution in our case will be given by the Eagon-Northcott complex.

\begin{Theorem}\label{THP_SVT}
Let $T=\mathbb{F}_p[X_1,X_2,\ldots,X_{d+1}],$ $R=T/(f)$ where $f\in T$ is a square-free, monic in $X_{d+1}$ homogeneous ploynomial of degree $d+1$ and $p>(d-1)(d+1)-d.$ Let $I=(x_1,\ldots ,x_d)$ and $\m=(I,x_{d+1}).$ Then $(I^n)^*=I^n+\m^{n+d-1}$ and for $n\geq 1,$
$$\ell \left( \frac{R}{(I^n)^*}\right)=(d+1)\binom{n+d-1}{d}-\binom{n+d-2}{d-1}.$$ Moreover $\gr ^*_I(R)$ is Cohen-Macaulay.
\end{Theorem}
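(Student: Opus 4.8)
The plan is to reduce all three assertions to the single computation $(I^n)^* = I^n + \m^{n+d-1}$, after which Proposition \ref{HP_t} supplies everything else. Indeed, granting $(I^n)^* = I^n + \m^{n+(d-1)}$ for every $n\geq 1$, the tight filtration $\mathcal{T}=\{(I^n)^*\}$ coincides with the filtration $\mathcal{F}=\{I^n+\m^{n+t}\}$ of Proposition \ref{HP_t} in the boundary case $t=d-1$, $r=d+1$ (so $r=t+2$). Part (1) of that proposition then gives that $\gr^*_I(R)=\gr_{\mathcal{F}}(R)$ is Cohen-Macaulay, while part (3) gives Hilbert series $(d+\lambda)/(1-\lambda)^d$, whence $P^*_I(n)=(d+1)\binom{n+d-1}{d}-\binom{n+d-2}{d-1}$. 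Since $\gr^*_I(R)$ is Cohen-Macaulay the postulation number equals $\eta(\mathcal{F})=-d$, so $H^*_I(n)=P^*_I(n)$ already for all $n\geq 1$. (If necessary I would base-change to an infinite coefficient field, which preserves all the lengths in question and, $\mathbb{F}_p$ being perfect, the square-freeness of $f$; the explicit reduction $I$ is available because $f$ is monic in $X_{d+1}$, so $x_1,\dots,x_d$ is a homogeneous system of parameters.)

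The core is therefore to prove $(I^n)^*=I^n+\m^{n+d-1}$, for which I would invoke Theorem \ref{gentytclosure}. First I would check its hypotheses: $R$ is an $\mathbb{N}$-graded Cohen-Macaulay hypersurface of dimension $d$ with maximal homogeneous ideal $\m$; $I^n$ is $\m$-primary since $x_1,\dots,x_d$ is a system of parameters; and $R/I^n$ has finite projective dimension because $x_1,\dots,x_d$ is a regular sequence, so $R/I^n$ is resolved by the \emph{finite} Eagon-Northcott complex. The Strong Vanishing Conjecture for $R$, required by Theorem \ref{gentytclosure}, I would deduce from the Strong Vanishing Theorem \ref{SV_Hyp}: this needs that $R$ has an isolated singularity and that the relevant partial derivatives of $f$ form a system of parameters, together with the characteristic bound $p>(d-1)(d+1)-d$, which is exactly the hypothesis. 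As a sanity check, Theorem \ref{SV_Hyp} applied to the degree-one system of parameters $x_1,\dots,x_d$ (so $A=d$) already yields the base case $I^*=I+R_{\geq d}=I+\m^d$.

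The crux is the degree bookkeeping in the minimal graded free resolution of $R/I^n$. Here I would realize $I^n$ as the ideal of maximal ($n\times n$) minors of a map $\alpha\colon R(-1)^{\,n+d-1}\to R^n$ whose band matrix has the $x_i$ as linear entries; its Eagon-Northcott complex has length $f-g+1=(n+d-1)-n+1=d$ and is exact precisely because $\grade I^n=\height I^n=d$. It is minimal, as every differential has entries in $\m$ (the minors $\wedge^n\alpha$ have degree $n$, and the higher differentials $d_i$ are linear). The decisive observation is that, since $d_2,\dots,d_d$ are all linear and $F_1=\wedge^n\!\bigl(R(-1)^{\,n+d-1}\bigr)$ is generated in degree $n$, each term $F_i$ is generated in the single degree $n+i-1$; in particular the last term is $F_d=R\bigl(-(n+d-1)\bigr)^{\binom{n+d-2}{d-1}}$, so all top twists $b_{di}$ coincide and $M=N=n+d-1$ in Theorem \ref{gentytclosure}. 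The squeeze $I^n+R_{\geq M}\subseteq (I^n)^*\subseteq I^n+R_{\geq N}$ then collapses to the equality $(I^n)^*=I^n+R_{\geq n+d-1}=I^n+\m^{n+d-1}$.

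I expect the main obstacle to be exactly this last-term degree computation: confirming that the Eagon-Northcott resolution of $R/I^n$ is linear from the second step onward, so that the top twists all equal $n+d-1$ and the two-sided estimate of Theorem \ref{gentytclosure} degenerates to an equality. A secondary point needing care is the verification, for the given $f$ under the stated characteristic bound, of the hypotheses of the Strong Vanishing Theorem \ref{SV_Hyp}, namely the isolated singularity of $R$ and the Jacobian system-of-parameters condition.
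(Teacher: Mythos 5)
Your proof of the key identity $(I^n)^*=I^n+\m^{n+d-1}$ is exactly the paper's argument: Strong Vanishing via Theorem \ref{SV_Hyp} gives $I^*=I+\m^d$, and the linearity of the Eagon--Northcott resolution of $R/I^n$ forces $M=N=n+d-1$ in Theorem \ref{gentytclosure}, collapsing the squeeze to an equality; your explicit degree bookkeeping (last term $R(-(n+d-1))^{\binom{n+d-2}{d-1}}$) is a correct expansion of what the paper states in one line. You diverge only in the endgame: you identify the tight filtration with the filtration $\mathcal{F}$ of Proposition \ref{HP_t} in the case $t=d-1$, $r=t+2$, and read off the Cohen--Macaulayness, the coefficients $e_0^*=d+1$, $e_1^*=1$, $e_{\geq 2}^*=0$, and the postulation bound from the Hilbert series $(d+\lambda)/(1-\lambda)^d$; the paper instead shows $I(I^n)^*=(I^{n+1})^*$ (reduction number one) and invokes the Huneke--Ooishi theorem (Proposition \ref{ahsappl}), extracting $e_1^*=1$ by evaluating at $n=1$. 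Both routes are valid and of comparable length; yours has the small advantage of also delivering the vanishing of the higher coefficients and the reduction/postulation numbers for free, and it is in fact the route the paper itself takes for the analogous theorem in Section \ref{sec_THP_diag_Hy}. Two minor points: the postulation number from Proposition \ref{HP_t} is $\eta(\mathcal{F})=r-t-d-1=1-d$, not $-d$ (immaterial, since $1-d\leq 0$ still gives $H^*_I(n)=P^*_I(n)$ for $n\geq 1$); and you are right to flag that the isolated-singularity and Jacobian hypotheses of Theorem \ref{SV_Hyp} need checking for the given $f$ --- the paper asserts this without verification, so you are, if anything, more careful there.
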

\begin{proof}
Since $p>(d-1)(d+1)-d,$ it follows from Theorem \ref{SV_Hyp} that Strong Vanishing holds for $R.$ Therefore we have $I^*=I+\m ^d .$ Using the fact that the Eagon-Northcott complex resolves $I^n$ and that the differentials $d_i,i\geq 2$ of this complex is linear, it follows from Theorem \ref{gentytclosure} that $(I^n)^*=I^n+\m ^{n+d-1}.$ We claim that $r^*(I)\leq 1.$ Indeed, for $n\geq 1,$ $I(I^n)^*=I^{n+1}+I \m ^{n+d-1}=I^{n+1}+\m ^{n+d}$ where the last equality follows as $r(\m)=d.$
Therefore by Huneke-Ooishi Theorem (Proposition \ref{ahsappl}) we have for $n\geq 1,$
\begin{align*}
\ell \left( \frac{R}{(I^n)^*}\right)&=e_0(I)\binom{n+d-1}{d}-e_1^*(I)\binom{n+d-2}{d-1}\\
&=(d+1)\binom{n+d-1}{d}-e_1^*(I)\binom{n+d-2}{d-1}
\end{align*}
Put $n=1$ to obtain $\ell\left( \frac{R}{I+\m ^d} \right)=(d+1)-e_1^*(I).$ Since
$$\frac{R}{I+\m ^d}\cong \frac{\mathbb{F}_p[X_1,X_2,\ldots,X_{d+1}]/(f)}{\left((X_1, X_2,\ldots ,X_d)+(X_1,X_2, \ldots , X_{d+1}) ^d +(f)\right)/(f)}\cong \frac{\mathbb{F}_p[X_{d+1}]}{({X_{d+1}})^d},$$ it follows that $e_1^*(I)=1.$
\end{proof}
Due to \cite[Proposition 5.21 (c)]{HR}, many diagonal hypersurface rings are $F$-pure, namely, $R=\mathbb{F}_p[X_1,X_2,\ldots,X_{d+1}]/(X_1^{d+1}+X_2^{d+1}+\cdots +X_{d+1}^{d+1}),$ where $ p\equiv 1 ( \mod d+1).$ In such cases, the test ideal turns out to be a radical ideal by Proposition \ref{Prop_FedWat_test}.Using this, we find the tight Hilbert polynomial in $F$-pure diagonal hypersurface rings where the number of variables is equal to the degree of the defining equation.

\begin{Theorem}\label{THP_Fpure}
Let $R=\mathbb{F}_p[X_1,X_2,\ldots,X_{d+1}]/(X_1^{d+1}+X_2^{d+1}+\cdots+X_{d+1}^{d+1})$ be F-pure, $I=(x_1,\ldots ,x_d)$ and $\m=(I,x_{d+1})$. Then for all $n\geq 1,$
$$\ell \left( \frac{R}{(I^n)^*}\right)=(d+1)\binom{n+d-1}{d}-\binom{n+d-2}{d-1}.$$ Moreover $\gr ^*_I(R)$ is Cohen-Macaulay.
\end{Theorem}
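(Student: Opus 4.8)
The plan is to reduce everything to the base case $n=1$ and then bootstrap with Proposition \ref{ahsappl}; the role of $F$-purity is to force the test ideal of $R$ to be radical, which pins down $I^*$ exactly. By the Remark following Theorem \ref{gentytclosure} I may compute all lengths in the excellent reduced ($F$-pure rings are reduced) Cohen-Macaulay local ring $R_\m$, so I will freely localize at $\m$. Note that $I=(x_1,\dots,x_d)$ is a homogeneous system of parameters, and in fact a minimal reduction of $\m$, since $x_{d+1}^{d+1}=-\sum_{i\le d}x_i^{d+1}\in I^{d+1}$ makes $x_{d+1}$ integral over $I$; hence $I$ is $\m$-primary and $e_0(I)=e(\m)=d+1$, the degree of the hypersurface.

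First I would identify the test ideal. Since $R$ is $F$-pure it is $F$-split and $F$-finite, so $\tau(R)$ is a radical ideal. Because $f=\sum X_i^{d+1}$ has partials $(d+1)X_i^{d}$ and $p\nmid d+1$ (as $R$ is $F$-pure), $R$ is regular on the punctured spectrum; hence $R$ is strongly $F$-regular away from $\m$ and $V(\tau(R))\subseteq\{\m\}$. It remains to see $\tau(R)\ne R$, i.e. that $R$ is not strongly $F$-regular, and this follows because $R$ is not $F$-rational: $R$ is a Gorenstein graded ring with $a$-invariant $a(R)=\deg f-\sum_i\deg X_i=(d+1)-(d+1)=0\ge 0$, and a graded $F$-rational ring with isolated singularity has negative $a$-invariant. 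Thus $\tau(R)$ is radical and $\m$-primary, forcing $\tau(R)=\m$. In particular $x_1,\dots,x_d\in\tau(R)\cap R^\circ$, and since any element of $\tau(R)\cap R^\circ$ is a test element, each $x_i$ is a test element.

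Next I would compute $I^*$. The inclusion $\tau(R)\cdot I^*\subseteq I$ gives $\m I^*\subseteq I$, so $I^*\subseteq (I:_R\m)$. Using $R/I\cong \mathbb{F}_p[x_{d+1}]/(x_{d+1}^{d+1})$, in which $\m$ acts as $(x_{d+1})$, the socle $(0:_{R/I}\m)$ equals $(x_{d+1}^d)$; hence $(I:_R\m)=I+\m^d$ and $I^*\subseteq I+\m^d$. For the reverse inclusion, non-$F$-rationality of $R$ forces every parameter ideal, in particular $I$, to be non-tightly-closed (in a Cohen-Macaulay local ring, $F$-rationality is equivalent to one parameter ideal being tightly closed), so $I\subsetneq I^*$. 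Since $(I+\m^d)/I$ is spanned by the single class of $x_{d+1}^d$ it is one-dimensional over $\mathbb{F}_p$, and the chain $I\subsetneq I^*\subseteq I+\m^d$ forces $I^*=I+\m^d$. Consequently $\ell(R/I^*)=\ell\big(\mathbb{F}_p[x_{d+1}]/(x_{d+1}^{d})\big)=d$.

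Finally I would invoke Proposition \ref{ahsappl} with $Q=I$: since the $x_i$ are test elements and $I$ is $\m$-primary in the excellent reduced Cohen-Macaulay ring $R_\m$, parts (3) and (4) give that $\gr^*_I(R)$ is Cohen-Macaulay and
\[
\ell\!\left(\frac{R}{(I^n)^*}\right)=e_0(I)\binom{n+d-1}{d}-e_1^*(I)\binom{n+d-2}{d-1}\qquad(n\ge 1).
\]
Here $e_0(I)=d+1$ as above, and part (2) of Proposition \ref{ahsappl} gives $e_1^*(I)=e_0(I)-\ell(R/I^*)=(d+1)-d=1$, yielding the stated formula; equivalently Proposition \ref{ahsappl}(1) gives $(I^n)^*=I^{n-1}I^*=I^n+\m^{n+d-1}$, recovering the filtration of Proposition \ref{HP_t} with $t=d-1$. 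The main obstacle is the second paragraph: establishing $\tau(R)=\m$, and in particular that $R$ fails to be $F$-rational, which is where the hypotheses ($F$-purity, the diagonal form, and $r=d+1$ through $a(R)=0$) are all used. Once $\tau(R)=\m$ is in hand the remaining steps are formal.
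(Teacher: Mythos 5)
Your argument is correct and follows the paper's overall architecture --- pin down $\tau(R)=\m$, deduce $I^*=I+\m^d$, then feed $\ell(R/I^*)=d$ into Proposition \ref{ahsappl} --- but you reach the two middle conclusions through different key lemmas. The paper obtains both $\tau(R)\neq R$ and the containment $x_{d+1}^d\in I^*\setminus I$ in one stroke from the Brian\c{c}on--Skoda theorem (since $x_{d+1}\in\overline{I}$ one gets $x_{d+1}^d\in\overline{I^d}\subseteq I^*$, while $x_{d+1}^d\notin I$ because $R/I\cong\mathbb{F}_p[x_{d+1}]/(x_{d+1}^{d+1})$), and then gets the upper bound $I^*\subseteq I+\m^d$ from the colon formula $I^*=I:\tau(R)$ of \cite[Corollary 4.2]{Huneke1998}. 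You instead rule out $F$-rationality via the $a$-invariant ($a(R)=0$, whereas graded $F$-rational rings have negative $a$-invariant), combine radicality of $\tau(R)$ with the isolated singularity to force $\tau(R)=\m$, obtain the upper bound from the bare definition $\tau(R)\cdot I^*\subseteq I$ together with the socle computation $I:\m=I+\m^d$, and obtain the lower bound from the fact that in an excellent Cohen--Macaulay local ring a single non-tightly-closed parameter ideal certifies non-$F$-rationality, plus $\ell\bigl((I+\m^d)/I\bigr)=1$. Both routes are sound; the paper's is more self-contained given the tools it has already assembled, while yours makes more transparent exactly where the hypothesis $r=d+1$ enters (through $a(R)=0$) and where $F$-purity enters (radicality of the test ideal), and it avoids invoking the Gorenstein colon formula. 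The only imprecision worth noting is cosmetic: $F$-finiteness is automatic for finitely generated algebras over $\mathbb{F}_p$ rather than a consequence of $F$-purity.
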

\begin{proof}
{Observe that, using Theorem \ref{Thm:Hoc_testele}, $x_1^d,\ldots ,x_{d+1}^d\in \tau(R).$ Since $R$ is F-pure, taking radicals on both sides, Proposition \ref{Prop_FedWat_test} yields $\m \subseteq \tau(R).$} Therefore by Proposition \ref{ahsappl} it follows that for $n\geq 1,$
$$\ell \left( \frac{R}{(I^n)^*}\right)=(d+1)\binom{n+d-1}{d}-e_1^*(I)\binom{n+d-2}{d-1},$$ where $e_1^*(I)=\ell\left( \frac{I^*}{I} \right).$ Using Brian\c{c}on-Skoda Theorem $x_{d+1}^d\in I^*\setminus I$ which implies that $\tau(R)=\m.$ We claim that $I^*=I+\m ^d.$ Since $I:\tau(R)=I^*$ \cite[Corollary 4.2]{Huneke1998}, it is enough to show that $x_{d+1}^{d-1}\notin I^*.$ If possible, let $x_{d+1}^{d-1}\in I^*$ then $x_{d+1}^d\in I,$ a contradiction. Hence the claim. Since $$\frac{R}{I+\m^d}\cong \frac{\mathbb{F}_p[X_1,\ldots ,X_{d+1}]}{(X_1,\ldots ,X_d,X_{d+1}^d)}\cong \frac{\mathbb{F}_p[X_{d+1}]}{(X_{d+1}^d)},$$ it follows that $e_1^*(I)=1.$
\end{proof}
As promised in section \ref{sec_HPfilt}, we now give a class of rings where the tight closure filtration turns out to be the filtration $\{I_n\}_{n\geq 0},$ where $I_0=R$ and $I_n=I^n+\m^{n+t}$ for $n\geq 1.$
\begin{Theorem}
Let $R=\mathbb{F}_p[X_1,X_2,\ldots,X_{t+2}]/(X_1^{r}+X_2^{r}+\cdots +X_{t+2}^{r}),$ with $p\nmid r,$  $I=(x_1,\ldots ,x_{t+1})$ and $\m=(I,x_{t+2}),$ where $p>tr-(t+1).$ Then\\
{\rm (1)} $\gr_I^*(R)$ is Cohen-Macaulay.\\
{\rm (2)} The tight Hilbert polynomial $P_I^*(n)$ is given by  \[
    P_I^*(n)= 
\begin{cases}
    r\binom{n+t}{t+1},& \text{ if } r\leq t+1,\\
    r\binom{n+t}{t+1}-\binom{r-t}{2}\binom{n+t-1}{t}+\binom{r-t}{3} \binom{n+t-2}{t-1}+\cdots +(-1)^j\binom{r-t}{j+1}\binom{n+t-j}{t+1-j},& \text{ if } r\geq t+2.\\
\end{cases}
\]
If $r\geq t+2,$ $e_j^*(I)=0$ for $r-t\leq j \leq t+1.$
\end{Theorem}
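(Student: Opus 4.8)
The plan is to determine the tight closure filtration exactly, proving that $(I^n)^* = I^n + \m^{n+t}$ for every $n \geq 1$, and then to read off all the assertions from Proposition \ref{HP_t} applied to the filtration $\mathcal{F} = \{I^n + \m^{n+t}\}_{n \geq 0}$. Observe first that the present statement is a generalization of Theorem \ref{THP_SVT}, which is exactly the case $r = t+2$ (that is, $\deg f = \dim R + 1$); so for the first step I would argue as in the proof of that theorem.

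The ring $R = \mathbb{F}_p[X_1,\ldots,X_{t+2}]/(\sum X_i^r)$ has dimension $d := t+1$. Because $p \nmid r$, the partial derivatives $\partial f/\partial X_i = r X_i^{r-1}$ vanish simultaneously only at the origin, so $R$ has an isolated singularity at $\m$ and any $d$ of these partials form a system of parameters. The hypothesis $p > tr-(t+1)$ is precisely the bound $p > (d-1)\deg f - \sum_{i=1}^d \deg X_i$ required in Theorem \ref{SV_Hyp}, so the Strong Vanishing Theorem applies: $R$ satisfies the Strong Vanishing Conjecture, and since $I = (x_1,\ldots,x_{t+1})$ is a homogeneous system of parameters of total degree $t+1$, we obtain $I^* = I + R_{\geq t+1} = I + \m^{t+1}$, which is the $n=1$ case.

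For $n \geq 2$ I would invoke Theorem \ref{gentytclosure}. Here $I$ is generated by a regular sequence of $t+1$ linear forms, so $R/I^n$ has finite projective dimension $t+1$ and $I^n$ is resolved by the Eagon-Northcott complex, whose differentials $d_i$ for $i \geq 2$ are linear. Reading off degrees, the generators of $I^n$ sit in degree $n$, and each subsequent linear differential raises the generating degree by one, so the last free module of this length-$(t+1)$ resolution is generated entirely in the single degree $n + (t+1) - 1 = n+t$. Consequently the two bounds $N$ and $M$ of Theorem \ref{gentytclosure} both equal $n+t$, and the inclusions pinch to $(I^n)^* = I^n + R_{\geq n+t} = I^n + \m^{n+t}$. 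I expect this degree bookkeeping---checking that the terminal free module is concentrated in a single degree, so that $M = N$---to be the crux; everything hinges on the linearity of the higher differentials of the Eagon-Northcott complex.

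With $(I^n)^* = I^n + \m^{n+t}$ established, the tight closure filtration $\mathcal{T} = \{(I^n)^*\}$ coincides with the filtration $\mathcal{F}$ of Proposition \ref{HP_t} with this value of $t$ and $d = t+1$. Since $f = \sum X_i^r$ is monic in $X_{t+2}$, $R$ is module-finite over $\mathbb{F}_p[x_1,\ldots,x_{t+1}]$ and $I$ is a genuine minimal reduction of $\m$; although Proposition \ref{HP_t} is phrased over an infinite field, its conclusions hold for this explicit $I$, or one may extend scalars to $\overline{\mathbb{F}}_p$, under which lengths and the Cohen-Macaulayness of the associated graded ring are preserved by flat base change. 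Part (1) of Proposition \ref{HP_t} then gives that $\gr^*_I(R) = \gr_{\mathcal{F}}(R)$ is Cohen-Macaulay, which is (1). For (2), part (2) of Proposition \ref{HP_t} yields $P_I^*(n) = r\binom{n+t}{t+1}$ when $r \leq t+1$, while part (3) gives $e_0^*(I) = r$, $e_j^*(I) = \binom{r-t}{j+1}$ for $1 \leq j \leq r-t-1$, and $e_j^*(I) = 0$ for $r-t \leq j \leq t+1$ when $r \geq t+2$; substituting these coefficients into the definition of the tight Hilbert polynomial produces exactly the claimed alternating-sum formula together with the asserted vanishing.
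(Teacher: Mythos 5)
Your proposal is correct and follows essentially the same route as the paper: Strong Vanishing gives $I^*=I+\m^{t+1}$, the linearity of the Eagon--Northcott resolution of $R/I^n$ combined with Theorem \ref{gentytclosure} pins down $(I^n)^*=I^n+\m^{n+t}$, and Proposition \ref{HP_t} then yields the Hilbert polynomial and the Cohen--Macaulayness of $\gr_I^*(R)$. Your added degree bookkeeping for the terminal free module and the remark about the infinite-field hypothesis in Proposition \ref{HP_t} are exactly the details the paper's terse proof leaves implicit.
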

\begin{proof}
Since $p>tr-(t+1),$ it follows from Theorem \ref{SV_Hyp} that Strong Vanishing Conjecture holds for $R.$ Therefore we have $I^*=I+\m ^{t+1} .$ Using the Eagon-Northcott complex, it follows from Theorem \ref{gentytclosure} that $(I^n)^*=I^n+\m ^{n+t}.$ The result follows from Proposition \ref{HP_t}.
\end{proof}  
\section{The Tight Hilbert polynomial for degree $p+1$ hypersurface ring of characteristic $p$} \label{sec_THP p+1 degree}
The goal of this section is to compute the tight Hilbert polynomial of linear system of parameters in  diagonal hypersurface rings where the characteristic of the ring is one less than the degree of the defining equation. In order to find the tight Hilbert polynomial, we calculate the initial ideals of certain ideals in $K[X_1,\ldots ,X_n].$\begin{Lemma}\label{comp_initialideal}
Let $A=K[X_1,X_2,\ldots ,X_m]$ be a polynomial ring over field $K$. Let $k\in \mathbb{N}$ and $q=p^e$ for some $e.$ Let $J=(X_1^N+X_2^N+\cdots +X_m^N,(X_2^q,X_3^q,\ldots ,X_m^q)^k).$ If $>$ denotes the graded reverse lex ordering with $X_1>X_2>\cdots >X_m,$ then $$\ini _> (J)=(X_1^N,(X_2^q,X_3^q,\ldots ,X_m^q)^k).$$
\end{Lemma}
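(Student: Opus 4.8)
The plan is to exhibit the obvious generating set of $J$ as a Gr\"obner basis for the order $>$, so that $\ini_>(J)$ is generated by the corresponding leading terms. Write $g=X_1^N+X_2^N+\cdots+X_m^N$, and for each $\alpha=(\alpha_2,\ldots,\alpha_m)\in\mathbb{N}^{m-1}$ with $\alpha_2+\cdots+\alpha_m=k$ put $M_\alpha=X_2^{q\alpha_2}\cdots X_m^{q\alpha_m}$. The monomials $M_\alpha$ are precisely the minimal generators of $(X_2^q,X_3^q,\ldots,X_m^q)^k$, so $G=\{g\}\cup\{M_\alpha\}$ generates $J$.

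First I would determine the leading term of $g$. All its terms have degree $N$, so under the graded reverse lexicographic order they are separated by the reverse-lex tie-break: for distinct monomials of equal degree, $X^\beta>X^\gamma$ exactly when the last nonzero coordinate of $\beta-\gamma$ is negative. Comparing $X_1^N$ with $X_i^N$ for $i\geq 2$, the exponent vectors are $(N,0,\ldots,0)$ and the vector with $N$ in position $i$ and zeros elsewhere; their difference has last nonzero coordinate $-N$ (in position $i$), whence $X_1^N>X_i^N$. Therefore $\ini_>(g)=X_1^N$. Each $M_\alpha$ is a monomial, hence equals its own leading term. This already yields $(X_1^N,(X_2^q,\ldots,X_m^q)^k)\subseteq\ini_>(J)$.

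For the reverse inclusion it suffices to prove that $G$ is a Gr\"obner basis, which I would do via Buchberger's criterion by checking that every $S$-polynomial reduces to zero modulo $G$. The $S$-polynomial of two monomials $M_\alpha,M_\beta$ is identically zero. For the pair $g,M_\alpha$ the decisive point is that the leading terms $X_1^N$ and $M_\alpha$ involve disjoint variables --- $X_1^N$ only $X_1$, and $M_\alpha$ only $X_2,\ldots,X_m$ --- so they are coprime; by Buchberger's coprimality (product) criterion, $S(g,M_\alpha)$ reduces to zero. Hence all $S$-polynomials reduce to zero, $G$ is a Gr\"obner basis, and consequently $\ini_>(J)$ is generated by the leading terms of $G$, namely $(X_1^N,(X_2^q,\ldots,X_m^q)^k)$, as claimed.

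The argument has essentially no deep obstacle: the whole point is the observation that $\ini_>(g)=X_1^N$ is coprime to every monomial generator $M_\alpha$, which trivializes Buchberger's criterion. The only step demanding care is correctly pinning down the leading term of the ``Fermat'' polynomial $g$ under grevlex; once $\ini_>(g)=X_1^N$ is established, coprimality and the product criterion finish the proof immediately. (Alternatively one could compare Hilbert functions: since $A/\ini_>(J)$ and $A/J$ have equal Hilbert functions and $(X_1^N,(X_2^q,\ldots,X_m^q)^k)\subseteq\ini_>(J)$, it would be enough to verify that $A/(X_1^N,(X_2^q,\ldots,X_m^q)^k)$ has the same Hilbert function as $A/J$; but the Gr\"obner-basis route above is cleaner.)
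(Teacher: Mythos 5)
Your proof is correct and takes essentially the same route as the paper: both exhibit the natural generating set $\{g\}\cup\{M_\alpha\}$ as a Gr\"obner basis for the given grevlex order and read off $\ini_>(J)$ from the leading terms. The only difference is cosmetic --- you dispatch the mixed $S$-polynomials $S(g,M_\alpha)$ via Buchberger's coprimality (product) criterion, whereas the paper computes these $S$-polynomials explicitly and reduces them to zero by the division algorithm.
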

\begin{proof}
It suffices to show that $$G=\Bigg \{ \sum_{i=1}^mX_i^N, \left[\prod_{j=2}^{m-1} X_j^{qr_j}\right]X_m^{q[k-(r_2+r_3+\cdots +r_{m-1})] } \text{, }r_j\in \{0,1,2,\ldots ,k\} \text{ for } 2\leq j\leq m-1  \Bigg \}$$ is a Gr\"{o}bner basis of $J$.
For $r_j\in \{0,1,\ldots ,k\},$ consider the $S$-polynomial of $f=X_1^N+X_2^N+\cdots +X_m^N$ and $g=X_2^{qr_2}X_3^{qr_3}\cdots X_{m-1}^{qr_{m-1}} X_m^{q[k-(r_2+r_3+\cdots +r_{m-1})]},$
\begin{align*}
S(f,g)&=\frac{X_1^N\left[\prod_{j=2}^{m-1} X_j^{qr_j}\right] X_m^{q[k-(r_2+r_3+\cdots +r_{m-1})]}  }{X_1^N}f -\frac{X_1^N\left[\prod_{j=2}^{m-1} X_j^{qr_j}\right] X_m^{q[k-(r_2+r_3+\cdots +r_{m-1})]}}{\left[\prod_{j=2}^{m-1} X_j^{qr_j}\right] X_m^{q[k-(r_2+r_3+\cdots +r_{m-1})]}} g\\
&=\left[\prod_{j=2}^{m-1} X_j^{qr_j}\right] X_m^{q[k-(r_2+r_3+\cdots +r_{m-1})]}\left(\sum_{i=2}^mX_i^N\right).
\end{align*}
By division algorithm, it follows that $\overline{S(f,g)}^G=0$ for $f=X_1^N+X_2^N+\cdots +X_m^N$ and\\ $g=X_2^{qr_2}X_3^{qr_3}\cdots X_{m-1}^{qr_{m-1}} X_m^{q[k-(r_2+r_3+\cdots +r_{m-1})]},$ where $r_j\in \{0,1,\ldots ,k\}.$ Now for $\overline{r}=(r_2,r_3,\ldots, r_{m-1})$ and $\overline{s}=(s_2,s_3,\ldots, s_{m-1})$ with $\overline{r}\neq \overline{s},$ we have $S$-polynomial,
\begin{align*}
&S\left(\left[\prod_{j=2}^{m-1} X_j^{qr_j}\right] X_m^{q[k-(r_2+r_3+\cdots +r_{m-1})]},\left[\prod_{j=2}^{m-1} X_j^{qs_j}\right] X_m^{q[k-(s_2+s_3+\cdots +s_{m-1})]}\right)=0.
\end{align*}
It follows that $G$ is a Gr\"{o}bner basis of $J$ and hence $\ini _> (J)=(X_1^N,(X_2^q,X_3^q,\ldots ,X_m^q)^k).$
\end{proof}
\begin{Theorem}\label{tytclIindegp+1}
Let $p$ be an odd prime, $R=\frac{\mathbb{F}_p[X_1,X_2,\ldots , X_{d+1}]}{(X_1^{p+1}+X_2^{p+1}+\cdots +X_{d+1}^{p+1})},$  $I=(x_1,x_2,\ldots ,x_d)$ and $\m=(I,x_{d+1}).$ Then $$I^*=I+\m ^2.$$
\end{Theorem}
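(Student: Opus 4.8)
The plan is to pin $I^{*}$ down as an ideal of the Artinian ring $R/I$, prove one membership by a direct Frobenius computation and its complement by a Hilbert--Kunz argument. First, two reductions. Since $I$ is a reduction of $\m$ we have $\overline I=\m$, so $I^{*}\subseteq\overline I=\m$ and $I^{*}$ is homogeneous. Setting $x_1=\dots=x_d=0$ collapses the defining relation to $x_{d+1}^{p+1}$, giving $R/I\cong\mathbb F_p[x_{d+1}]/(x_{d+1}^{p+1})$; hence $I^{*}/I$ is an ideal of this ring contained in $\m/I=(x_{d+1})$, so $I^{*}=I+(x_{d+1}^{j})$ where $j\ge1$ is least with $x_{d+1}^{j}\in I^{*}$. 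As $\m^{2}=(I,x_{d+1})^{2}\subseteq I+(x_{d+1}^{2})$, the theorem is equivalent to the two assertions $x_{d+1}^{2}\in I^{*}$ and $x_{d+1}\notin I^{*}$.

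The inclusion $I+\m^{2}\subseteq I^{*}$ follows from $x_{d+1}^{2}\in I^{*}$, which I would prove with multiplier $c=1$. From $x_{d+1}^{p+1}=-(x_1^{p+1}+\dots+x_d^{p+1})$ and $x_i^{p+1}=x_i\,x_i^{p}\in(x_i^{p})$,
$$x_{d+1}^{2p}=x_{d+1}^{p-1}\,x_{d+1}^{p+1}=-\sum_{i=1}^{d}x_{d+1}^{p-1}x_i^{p+1}\in(x_1^{p},\dots,x_d^{p})=I^{[p]}.$$
Raising to the $p^{e-1}$ power and using additivity of $p$-th powers, $x_{d+1}^{2q}=(x_{d+1}^{2p})^{p^{e-1}}\in(I^{[p]})^{[p^{e-1}]}=I^{[q]}$ for every $q=p^{e}$ with $e\ge1$. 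Thus $x_{d+1}^{2}$ already lies in the Frobenius closure of $I$, hence in $I^{*}$, and so $\m^{2}\subseteq I^{*}$.

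The reverse inclusion amounts to $x_{d+1}\notin I^{*}$, which I would prove by contradiction through Hilbert--Kunz multiplicities, using Lemma~\ref{comp_initialideal} for the bookkeeping. Relabelling so that $x_{d+1}$ is the largest variable, the lemma with $k=1$ gives $\ini_{>}\big((X_1^{q},\dots,X_d^{q})+(f)\big)=(X_{d+1}^{p+1},X_1^{q},\dots,X_d^{q})$, so the monomials $x_1^{a_1}\cdots x_d^{a_d}x_{d+1}^{b}$ with $a_i<q$, $b\le p$ form an $\mathbb F_p$-basis of $R/I^{[q]}$; in particular $\ell(R/I^{[q]})=(p+1)q^{d}$ and $e_{\mathrm{HK}}(I)=p+1=e(I)$. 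Suppose $x_{d+1}\in I^{*}$. Then $\m=(I,x_{d+1})\subseteq I^{*}\subseteq\m$, so $I^{*}=\m$; as $R$ is a (formally equidimensional) complete intersection, $J\subseteq I^{*}$ forces $e_{\mathrm{HK}}(J)=e_{\mathrm{HK}}(I)$ for $\m$-primary $J$, whence $e_{\mathrm{HK}}(\m)=p+1$. On the other hand $\m^{[q]}=I^{[q]}+(x_{d+1}^{q})$ gives $\ell(R/\m^{[q]})=(p+1)q^{d}-\ell\big((I^{[q]}+(x_{d+1}^{q}))/I^{[q]}\big)$, so it suffices to force the colength $\ell\big((I^{[q]}+(x_{d+1}^{q}))/I^{[q]}\big)=\ell\big(R/(I^{[q]}:x_{d+1}^{q})\big)$ to grow like a positive multiple of $q^{d}$.

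For this growth, reduce $x_{d+1}^{q}$ modulo $I^{[q]}$: with $q=(p+1)M+r$, $r\in\{1,p\}$, one has $\overline{x_{d+1}^{q}}=(-1)^{M}x_{d+1}^{r}\Psi$, where $\Psi=(x_1^{p+1}+\dots+x_d^{p+1})^{M}$ lies in $P_0:=\mathbb F_p[x_1,\dots,x_d]/(x_1^{q},\dots,x_d^{q})$, a subring over which $R/I^{[q]}$ is free on $1,x_{d+1},\dots,x_{d+1}^{p}$. Multiplying $\overline{x_{d+1}^{q}}$ by $x_{d+1}^{p-r}P_0$ and using that $\cdot\,x_{d+1}^{p}$ is injective on $P_0$ shows $\ell\big(R/(I^{[q]}:x_{d+1}^{q})\big)\ge\ell(\Psi P_0)$. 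The hard part, and the crux of the whole theorem, is the bound $\ell(\Psi P_0)\ge\varepsilon q^{d}$ for some fixed $\varepsilon>0$: here $\Psi$ has degree $(p+1)M\approx q$ while $P_0$ has socle degree $d(q-1)$, so for $d\ge2$ the ideal $\Psi P_0$ occupies a band of degrees of positive density, whereas for $d=1$ it stays bounded (reflecting that $R$ is a domain only for $d\ge2$, the curve $X_1^{p+1}+X_2^{p+1}$ being a union of lines with $e_{\mathrm{HK}}(\m)=p+1$ and $x_2\in I^{*}$, so the theorem genuinely needs $d\ge2$). This yields $e_{\mathrm{HK}}(\m)\le(p+1)-\varepsilon<p+1$, the contradiction. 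A cleaner alternative avoiding the explicit estimate is to induct on $d$: by persistence of tight closure along $R\twoheadrightarrow R/(x_1)$, $x_{d+1}\in I^{*}$ would force $\overline{x_{d+1}}\in(x_2,\dots,x_d)^{*}$ in the $(d-1)$-dimensional diagonal hypersurface $R/(x_1)$, against the inductive hypothesis, with the base case $d=2$ supplied by the two-dimensional computation of \cite{GMV2019}.
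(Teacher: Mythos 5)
Your reduction (``$I^*=I+(x_{d+1}^j)$, so the theorem is equivalent to $x_{d+1}^2\in I^*$ and $x_{d+1}\notin I^*$'') and your proof of the membership $x_{d+1}^2\in I^*$ are correct and essentially identical to the paper's: the paper also shows $x_{d+1}^{(p+1)p^e}\in I^{[p^{e+1}]}$ and deduces $x_{d+1}^2$ lies in the Frobenius closure of $I$. The problem is your primary argument for $x_{d+1}\notin I^*$. The Hilbert--Kunz framework is sound (the computation $\ell(R/I^{[q]})=(p+1)q^d$ via Lemma \ref{comp_initialideal}, and the fact that $\m\subseteq I^*$ would force $e_{HK}(\m)=e_{HK}(I)$), but the entire content of the theorem is then concentrated in the estimate $\ell(\Psi P_0)\geq\varepsilon q^d$, which you explicitly leave unproved. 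The degree-band heuristic does not deliver it: the obvious argument (distinct lex-leading monomials of $\Psi x^b$, with $\ini(\Psi)=x_1^{(p+1)M}$) only produces on the order of $q^{d-1}$ independent elements of $\Psi P_0$, since $b_1$ is confined to $\{0,\dots,r-1\}$; to do better one must control which multinomial coefficients of $(x_1^{p+1}+\cdots+x_d^{p+1})^M$ survive mod $p$, which is a genuine Han--Monsky-type computation. So as written, approach A is not a proof.

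Your ``cleaner alternative,'' by contrast, does work and is genuinely different from the paper. Persistence of tight closure applies to $R\to R/(x_1)$ (as $R$ is excellent and $F$-finite), the quotient is exactly the $(d-1)$-dimensional instance of the theorem with parameter ideal $(x_2,\dots,x_d)$, and $x_{d+1}$ is visibly not in $(x_2,\dots,x_d)+\n^2$ in degree one; the base case $d=2$ follows from the Strong Vanishing Theorem \ref{SV_Hyp} since $p>(p+1)-2$, consistent with \cite{GMV2019}. (Both your induction and the paper's argument implicitly require $d\geq 2$; for $d=1$ the statement is false because $x_2\in\overline{(x_1)}=(x_1)^*$ by Brian\c{c}on--Skoda.) The paper's own route is more computational and self-contained: it uses that $x_d^p$ is a test element and shows directly that $x_d^p x_{d+1}^{p^3}\notin I^{[p^3]}$ by reducing $X_d^pX_{d+1}^{p^3}$ modulo $J=(f,X_1^{p^3},\dots,X_d^{p^3})$ and checking, via the initial ideal $\ini_>(J)=(X_{d+1}^{p+1},X_1^{p^3},\dots,X_d^{p^3})$ from Lemma \ref{comp_initialideal}, that the resulting leading monomial is not in $\ini_>(J)$. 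Your induction buys brevity and avoids the Gr\"obner bookkeeping at the cost of invoking persistence and the strong vanishing theorem for the base case; the paper's computation is what generalizes to powers $I^n$ in Theorem \ref{p+1 deg ch p}, which is why it is the one carried out there.
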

\begin{proof}
First we will show that $x_{d+1}^2\in I^*.$ Clearly, 
\begin{equation} \label{eqn chp deg p+1}
(x_{d+1}^{p+1})^{p^e}=-(x_{1}^{p+1})^{p^e}-(x_{2}^{p+1})^{p^e}-\cdots -(x_{d}^{p+1})^{p^e}\in I^{[p^{e+1}]}.
\end{equation}
Therefore $\underbrace{(x_{d+1}^2)^{p^e}\cdots (x_{d+1}^2)^{p^e}\,}_\text{$m$ times} \in I^{[p^{e+1}]}$ where $p+1=2m.$ Since $p>m,$ it follows that $$(x_{d+1}^2)^{p^{e+1}}=\underbrace{(x_{d+1}^2)^{p^e}\cdots (x_{d+1}^2)^{p^e}\,}_\text{$p$ times}\in I^{[p^{e+1}]}.$$ Hence $x_{d+1}^2\in I^*.$ 
It is enough to show that $x_{d+1} \notin I^* .$ {As $x_d^{p}$ is a test element by Theorem \ref{Thm:Hoc_testele}, we show that $x_{d}^{p}x_{d+1}^{p^3}\notin I^{[p^3]}.$} If possible, let $x_d^px_{d+1}^{p^3}\in I^{[p^3]}.$ Note that $x_d^px_{d+1}^{p^3}\in I^{[p^3]}$ if and only if $X_d^{p}X_{d+1}^{p^3}\in J,$ where $J=(X_1^{p+1}+\cdots+X_{d+1}^{p+1},X_1^{p^3},\ldots ,X_d^{p^3})$ in $\mathbb{F}_p[X_1,\ldots ,X_{d+1}].$ 
Let $>$ denote the graded reverse lex ordering with $X_{d+1}>X_1>X_2>\cdots >X_d$ in $\mathbb{F}_p[X_1,X_2,\ldots ,X_{d+1}].$ Let $p^3=(p+1)u+i,$ where $1\leq i \leq p.$ Note that $i\neq 0.$ Clearly,
$$X_d^p X_{d+1}^{p^3}=X_d^pX_{d+1}^{(p+1)u+i}\equiv f (\text{mod }J),$$ where $f= (-1)^uX_d^p(X_1^{p+1}+\cdots +X_d^{p+1})^uX_{d+1}^i.$ Note that $\ini_{>}(f)=X_d^p X_1^{(p+1)u}X_{d+1}^i$ and by Lemma \ref{comp_initialideal} it follows that $\ini_{>}(J)=(X_{d+1}^{p+1},X_1^{p^3},\ldots, X_d^{p^3}).$ The exponent of $X_1$ in $\ini_{>}(f)$ is $(p+1)u=p^3-i<p^3,$ exponent of $X_d$ and $X_{d+1}$ in $\ini_{>}(f)$ is $p$ and $i$ respectively. This implies that $\ini_{>}(f)\notin \ini_{>}(J).$ Hence $x_{d+1}\notin I^*$ so that $I^*=I+\m ^2.$
\end{proof}
\begin{Lemma}\label{lemma_belonging to power of I}
Let $R=\frac{\mathbb{F}_p[X_1,X_2,\ldots , X_{d+1}]}{(X_1^{p+1}+X_2^{p+1}+\cdots +X_{d+1}^{p+1})}$ and $I=(x_1,x_2,\ldots ,x_d)$ where $p$ is prime. 
\[
    (x_{d+1}^n)^{[p^{e+1}]}\in  
\begin{cases}
    (I^{n-1})^{[p^{e+1}]},& \text{ if } n\not\equiv 0 \pmod {p+1},\\
    (I^{n})^{[p^{e+1}]}, & \text{ if } n\equiv 0 \pmod {p+1}.
\end{cases}
\]
\end{Lemma}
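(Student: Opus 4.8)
The plan is to exploit the single defining relation $x_{d+1}^{p+1}=-\sum_{i=1}^{d}x_i^{p+1}$ of $R$ together with the characteristic-$p$ identity $\big(\sum_j a_j\big)^{p^f}=\sum_j a_j^{p^f}$. First I would reduce to the case $e=0$. In any ring of characteristic $p$, if $a$ lies in an ideal $\mathfrak a$, then writing $a$ as an $R$-combination of generators of $\mathfrak a$ and raising to the $p^e$-th power shows $a^{p^e}\in\mathfrak a^{[p^e]}$. Applying this with $a=x_{d+1}^{np}$ and $\mathfrak a=(I^k)^{[p]}$, together with $\big((I^k)^{[p]}\big)^{[p^e]}=(I^k)^{[p^{e+1}]}$, reduces the lemma to the assertion $x_{d+1}^{np}\in (I^k)^{[p]}=(I^{[p]})^{k}$, where $k=n-1$ if $n\not\equiv 0\pmod{p+1}$ and $k=n$ if $n\equiv 0\pmod{p+1}$.

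Next I would record two inclusions coming from the relation. The first, elementary, inclusion is $(x_{d+1}^{p+1})^{M}\in (I^{[p]})^{M}$ for every $M\ge 0$: expanding $(x_{d+1}^{p+1})^{M}=\pm\big(\sum_i x_i^{p+1}\big)^{M}$ multinomially, each monomial factors as $\prod_i (x_i^{p})^{c_i}\cdot\prod_i x_i^{c_i}$ with $\sum_i c_i=M$, hence is a product of $M$ generators $x_i^{p}$ of $I^{[p]}$. The second, sharp, inclusion — the crux of the argument — is $(x_{d+1}^{p+1})^{Mp}\in (I^{[p]})^{M(p+1)}$: here I would first write $(x_{d+1}^{p+1})^{Mp}=\big((x_{d+1}^{p+1})^{M}\big)^{p}=\pm\big(\sum_i x_i^{p+1}\big)^{Mp}$ and then apply the characteristic-$p$ identity to the outer $p$-th power, so that every surviving monomial equals $\prod_i (x_i^{p})^{c_i(p+1)}$ with $\sum_i c_i=M$, i.e. a product of $M(p+1)$ generators of $I^{[p]}$. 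The extra factor $(p+1)/p$ gained here over the naive estimate is precisely what will allow the exponent $k=n$.

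Finally I would combine these by writing $n=(p+1)s+t$ with $0\le t\le p$. If $n\equiv 0\pmod{p+1}$ then $t=0$ and $x_{d+1}^{np}=(x_{d+1}^{p+1})^{sp}\in (I^{[p]})^{s(p+1)}=(I^{[p]})^{n}$ by the sharp inclusion with $M=s$. If $n\not\equiv 0$, then $1\le t\le p$ and $x_{d+1}^{np}=(x_{d+1}^{p+1})^{sp}\cdot x_{d+1}^{tp}$; the first factor lies in $(I^{[p]})^{s(p+1)}$, while rewriting $x_{d+1}^{tp}=(x_{d+1}^{p+1})^{t-1}\,x_{d+1}^{p+1-t}$ (with $p+1-t\ge 0$) and using the elementary inclusion with $M=t-1$ puts the second factor in $(I^{[p]})^{t-1}$, so their product lies in $(I^{[p]})^{s(p+1)+t-1}=(I^{[p]})^{n-1}$. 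I expect the main obstacle to be recognizing that the elementary estimate alone is too weak in the divisible case, so that the Frobenius boost in the sharp inclusion is genuinely needed to reach $k=n$; the remaining work is careful bookkeeping of the exponents.
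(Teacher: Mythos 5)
Your proof is correct and follows essentially the same route as the paper's: both arguments rest on the relation $x_{d+1}^{p+1}=-\sum_{i=1}^{d}x_i^{p+1}$ combined with the Frobenius identity to show that $x_{d+1}^{(p+1)}$ raised to a $p$-th (or $p^e$-th) power gains an extra Frobenius level inside powers of $I$, and both then split $n$ into its quotient and remainder modulo $p+1$, treating the remainder with the weaker ``elementary'' estimate. Your reduction to $e=0$ and the explicit separation into the two named inclusions is just a cleaner packaging of the paper's computation, not a different method.
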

\begin{proof}
Let $n\equiv 0 \pmod {p+1}.$ Then $n=(p+1)m$ for some $m\in \mathbb{N}.$ Then
$$(x_{d+1}^n)^{p^{e+1}}=(x_{d+1}^{(p+1)m})^{p^{e+1}}=[(x_{d+1}^{p+1})^{p^{e+1}}]^m \in \left((I^{p+1})^{[p^{e+1}]}\right)^m=(I^n)^{[p^{e+1}]}.$$
Next let $n\not\equiv 0 \pmod {p+1}.$ Write $n=(p+1)m+r$ where $1\leq r \leq p.$ If $m=0$ then we may assume that $r>1.$ 
\begin{align*}
(x_{d+1}^r)^{p^{e+1}}&=\underbrace{(x_{d+1}^r)^{p^e}\cdots (x_{d+1}^r)^{p^e}\,}_\text{$p$ times}=\underbrace{(x_{d+1}^p)^{p^e}\cdots (x_{d+1}^p)^{p^e}\,}_\text{$r$ times}\\
&=\underbrace{(x_{d+1}^{p+1})^{p^e}\cdots (x_{d+1}^{p+1})^{p^e}\,}_\text{$r-1$ times}(x_{d+1}^{p-(r-1)})^{p^e} \in (I^{r-1})^{[p^{e+1}]},
\end{align*}
where the last statement follows from equation \ref{eqn chp deg p+1} in Theorem \ref{tytclIindegp+1}. This implies that $$(x_{d+1}^n)^{[p^{e+1}]}=(x_{d+1}^{m(p+1)}x_{d+1}^r)^{p^{e+1}}\in (I^{(m(p+1))}I^{r-1})^{[p^{e+1}]}= (I^{n-1})^{[p^{e+1}]}.$$
\end{proof}

\begin{Theorem}\label{p+1 deg ch p}
Let $R=\frac{\mathbb{F}_p[X_1,X_2,\ldots , X_{d+1}]}{(X_1^{p+1}+X_2^{p+1}+\cdots +X_{d+1}^{p+1})},$ $I=(x_1,x_2,\ldots ,x_d)$ and $\m=(I,x_{d+1}).$ Then\\
\rm {(1)} $(I^n)^*\supseteq I^n+\m ^{n+1};$\\
\rm {(2)} If $x_1^{a_1}x_2^{a_2}\cdots x_{d+1}^{a_{d+1}}\in \m^n\setminus I^n$ then $x_1^{a_1}x_2^{a_2}\cdots x_{d+1}^{a_{d+1}}\notin (I^n)^*.$ \\
\rm {(3)} $x_1^{a_1}x_2^{a_2}\cdots x_{d+1}^{a_{d+1}}\notin (I^n)^*$ for $\sum_{j=1}^{d+1}a_j<n.$
\end{Theorem}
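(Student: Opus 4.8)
The plan is to prove the three containments by three different methods: Lemma~\ref{lemma_belonging to power of I} drives the lower bound in (1), a degree count disposes of (3), and the Gröbner computation of Lemma~\ref{comp_initialideal} handles the delicate non-membership in (2). For (1), since $(I^n)^*$ is an ideal containing $I^n$, I would reduce to showing that every monomial $x^a=x_1^{a_1}\cdots x_{d+1}^{a_{d+1}}$ with $\sum_j a_j=n+1$ lies in $(I^n)^*$; those with $a_{d+1}=0$ already lie in $I^{n+1}\subseteq I^n$, so assume $a_{d+1}\ge 1$. Factoring $(x^a)^{[p^{e+1}]}=(x_1^{a_1}\cdots x_d^{a_d})^{[p^{e+1}]}(x_{d+1}^{a_{d+1}})^{[p^{e+1}]}$ and setting $s=a_1+\cdots+a_d$, the first factor lies in $(I^s)^{[p^{e+1}]}$, while Lemma~\ref{lemma_belonging to power of I} places the second in $(I^{a_{d+1}-1})^{[p^{e+1}]}$ when $a_{d+1}\not\equiv 0\pmod{p+1}$ and in $(I^{a_{d+1}})^{[p^{e+1}]}$ otherwise. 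As $s+a_{d+1}=n+1$, the product lies in $(I^n)^{[p^{e+1}]}$ in both cases, so $(x^a)^{p^{e+1}}\in(I^n)^{[p^{e+1}]}$ for all $e\ge 0$ and hence $x^a\in(I^n)^*$ with $c=1$. This gives $(I^n)^*\supseteq I^n+\m^{n+1}$.

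For (3) I would show that $(I^n)^*$ has no nonzero homogeneous component in degree $<n$. If $z$ were such, homogeneous of degree $e<n$, then for the test element $c=x_d^p\in R^\circ$ (degree $p$) we would have $cz^q\in(I^n)^{[q]}=(x_1^q,\ldots,x_d^q)^n$, whose nonzero homogeneous elements have degree $\ge nq$; comparing degrees forces $p+eq\ge nq$, impossible for large $q$ since $n-e\ge 1$, so $cz^q=0$ and thus $z=0$ as $R$ is reduced and $c$ a nonzerodivisor. Because $f$ is monic in $X_{d+1}$, $R$ is free over $\mathbb{F}_p[x_1,\ldots,x_d]$ on $1,x_{d+1},\ldots,x_{d+1}^p$, so every monomial is nonzero; a monomial with $\sum_j a_j<n$ is therefore a nonzero element of degree $<n$, hence not in $(I^n)^*$. (One could equally cite $(I^n)^*\subseteq\overline{I^n}=\m^n$.)

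Part (2) is the crux, and I would carry it out for $x^a$ of degree exactly $n$ lying in $\m^n\setminus I^n$ (this is the effective case: reducing in the free presentation shows that for $\deg x^a=n$ one has $x^a\in I^n$ iff $a_{d+1}\equiv 0\pmod{p+1}$, so the hypothesis forces $a_{d+1}\not\equiv 0$). If $x^a\in(I^n)^*$, then since $x_d^p$ is a test element, $x_d^p(x^a)^q\in(I^n)^{[q]}$ for all $q=p^e$, i.e. $X_d^p X^{aq}\in J_q:=\left(f,\,(X_1^q,\ldots,X_d^q)^n\right)\subseteq T$. Lemma~\ref{comp_initialideal}, applied with $X_{d+1}$ in the distinguished role and graded reverse lex $X_{d+1}>X_1>\cdots>X_d$, gives $\ini_>(J_q)=\left(X_{d+1}^{p+1},\,(X_1^q,\ldots,X_d^q)^n\right)$. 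I would then reduce $X_d^p X^{aq}$ modulo $f$: writing $a_{d+1}q=(p+1)u+i$ with $1\le i\le p$ (here $i\ge 1$ precisely because $q\equiv\pm1\pmod{p+1}$ and $a_{d+1}\not\equiv 0$), one gets $X_d^p X^{aq}\equiv (-1)^u X_1^{a_1q}\cdots X_d^{a_dq+p}\,c^{\,u}X_{d+1}^{i}\pmod f$ with $c=\sum_{k=1}^d X_k^{p+1}$, whose leading monomial is $X_1^{a_1q+(p+1)u}X_2^{a_2q}\cdots X_d^{a_dq+p}X_{d+1}^{i}$. The decisive estimate is $\lfloor (p+1)u/q\rfloor=a_{d+1}-1$ (the $-1$ coming from $i\ge 1$), which makes $\sum_{j\le d}\lfloor(\text{exponent of }X_j)/q\rfloor=(a_1+\cdots+a_d+a_{d+1})-1=n-1<n$; together with the $X_{d+1}$-exponent being $i\le p$, this shows the leading monomial is divisible by no generator of $\ini_>(J_q)$. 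Hence the reduced polynomial, and therefore $X_d^p X^{aq}$, lies outside $J_q$, contradicting $x^a\in(I^n)^*$.

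The main obstacle is exactly this last computation: one must run the division argument for the full power $(X_1^q,\ldots,X_d^q)^n$ rather than a single Frobenius power, and everything turns on the identity $\lfloor (p+1)u/q\rfloor=a_{d+1}-1$, i.e. on $\deg x^a$ being $n$ (for $\deg x^a\ge n+1$ the same count returns $\ge n$, consistent with (1) already placing those monomials inside $(I^n)^*$). Finally, since distinct reduced degree-$n$ monomials yield distinct leading monomials under this reduction, the identical initial-term count rules out arbitrary $\mathbb{F}_p$-linear combinations in degree $n$ as well; combined with (1) and (3) this upgrades the three statements to the equality $(I^n)^*=I^n+\m^{n+1}$ that is used in the sequel.
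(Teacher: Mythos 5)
Your argument is correct, and for parts (1) and (2) it follows essentially the paper's own proof: part (1) is exactly the factorization $(x^a)^{[p^{e+1}]}=(x_1^{a_1}\cdots x_d^{a_d})^{[p^{e+1}]}(x_{d+1}^{a_{d+1}})^{[p^{e+1}]}$ combined with Lemma \ref{lemma_belonging to power of I}, and part (2) is the same test-element-plus-initial-ideal computation: reduce $X_d^pX^{aq}$ modulo $f$, compare the leading monomial against $\ini_>(J)=(X_{d+1}^{p+1},(X_1^q,\ldots,X_d^q)^n)$ from Lemma \ref{comp_initialideal}, and conclude from the count $\sum_{j\le d}\lfloor e_j/q\rfloor=n-1<n$ together with the $X_{d+1}$-exponent $i\le p$. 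You are right to restrict (2) to monomials of degree exactly $n$ and to record that the hypothesis forces $a_{d+1}\not\equiv 0\pmod{p+1}$ (hence $i\ge 1$); the paper's computation silently uses $\sum_j a_j=n$ at the step where the $X_1$-exponent is rewritten as $[n-\sum_{j=2}^d a_j-1]q+(q-i)$, so making this explicit, and noting that monomials of degree $>n$ are already handled by (1), sharpens rather than changes the argument. The one place you genuinely diverge is (3): the paper reruns the entire Gr\"obner computation allowing $0\le i\le p$, whereas you observe that $(I^n)^{[q]}=(x_1^q,\ldots,x_d^q)^n$ is generated in degree $nq$, so for homogeneous $z$ of degree $e<n$ the element $x_d^pz^q$ has degree $p+eq<nq$ for $q$ large and must vanish, whence $z=0$ because $R$ is reduced and $x_d$ is a nonzerodivisor. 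This degree count is shorter and more transparent, and it proves the stronger fact that $(I^n)^*$ contains no nonzero homogeneous element of degree less than $n$; the only additional input is that the monomials in question are nonzero in $R$, which your freeness of $R$ over $\mathbb{F}_p[x_1,\ldots,x_d]$ (or the containment $(I^n)^*\subseteq\overline{I^n}$) supplies.
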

\begin{proof}
\rm {(1)} Consider $x_1^{b_1}x_2^{b_2}\cdots x_{d+1}^{b_{d+1}}\in \m ^{n+1}$ where $\sum_{j=1}^{d+1}b_j=n+1$. Using Lemma \ref{lemma_belonging to power of I}, observe that 
$$
(x_1^{b_1}x_2^{b_2}\cdots x_{d+1}^{b_{d+1}})^{p^{e+1}}=(x_1^{b_1}x_2^{b_2}\cdots x_d^{b_d})^{p^{e+1}} (x_{d+1}^{b_{d+1}})^{p^{e+1}} 
$$ \[
\in
\begin{cases}
    (I^{n+1-b_{d+1}})^{[p^{e+1}]} (I^{b_{d+1}-1})^{[p^{e+1}]},& \text{if } b_{d+1} \not\equiv 0 \pmod {p+1},\\
    (I^{n+1-b_{d+1}})^{[p^{e+1}]} (I^{b_{d+1}})^{[p^{e+1}]},& \text{if } b_{d+1} \equiv 0 \pmod {p+1}.
\end{cases}
\]
In either case, $(x_1^{b_1}x_2^{b_2}\cdots x_{d+1}^{b_{d+1}})^{p^{e+1}}\in (I^n)^{[p^{e+1}]}.$ This implies that $I^n+\m ^{n+1} \subseteq (I^n)^*.$\\
\rm {(2)} Note that $x_1^{a_1}x_2^{a_2}\cdots x_{d+1}^{a_{d+1}}\in \m^n\setminus I^n$ implies that $a_{d+1}\geq 1$ and that $p+1\nmid a_{d+1}.$ Indeed, if $a_{d+1}=(p+1)u$ for some $u$ then $$x_1^{a_1}x_2^{a_2}\cdots x_{d+1}^{a_{d+1}}=(-1)^ux_1^{a_1}x_2^{a_2}\cdots x_d^{a_d}(x_1^{p+1}+\cdots +x_{d}^{p+1})^u\in I^n,$$ a contradiction to the assumption. Hence $p+1\nmid a_{d+1}.$ {We use the fact that $x_d^p$ is a test element using Theorem \ref{Thm:Hoc_testele}.} Observe that $x_1^{a_1}x_2^{a_2}\cdots x_{d+1}^{a_{d+1}}\notin (I^n)^*$ if and only if $x_d^p(x_1^{a_1}x_2^{a_2}\cdots x_{d+1}^{a_{d+1}})^q\notin (I^n)^{[q]}$ for some $q.$
For $e\in \mathbb{N}$ set $q=p^{e+1}>p+1$ and $a_{d+1}q=(p+1)u+i$ where $1\leq i \leq  p.$ Let $>$ be the graded reverse lex ordering with $X_{d+1}>X_1>X_2>\cdots >X_d$ in $A=\mathbb{F}_p[X_1,X_2,\ldots ,X_d,X_{d+1}].$\\ Let $J=(X_1^{p+1}+X_2^{p+1}+\cdots +X_{d+1}^{p+1},(X_1^q,X_2^q,\ldots, X_d^q )^n)$ be an ideal in $A.$ Using Lemma \ref{comp_initialideal}, it follows that $\ini _>(J)=(X_{d+1}^{p+1},(X_1^q,X_2^q,\ldots, X_d^q )^n).$ Now
\begin{align*}
{X_d}^p({X_1}^{a_1q}\cdots {X_{d+1}}^{a_{d+1}q})&={X_1}^{a_1q}\cdots  X_{d-1}^{a_{d-1}q}X_d^{a_dq+p}{X_{d+1}^{(p+1)u+i}}\\
&\equiv (-1)^uX_1^{a_1q}\cdots  X_{d-1}^{a_{d-1}q}X_d^{a_dq+p}(X_1^{p+1}+\cdots +X_d^{p+1})^uX_{d+1}^i \;(\text{mod }J). 
\end{align*}
Let $f=(-1)^uX_1^{a_1q}\cdots  X_{d-1}^{a_{d-1}q}X_d^{a_dq+p}(X_1^{p+1}+\cdots +X_d^{p+1})^uX_{d+1}^i.$ Clearly $$X_d^p(X_1^{a_1q}X_2^{a_2q}\cdots X_{d+1}^{a_{d+1}q})\in J \Leftrightarrow f\in J.$$ Note that $\ini _>(f)=X_1^{a_1q+(p+1)u}X_2^{a_2q}\cdots  X_{d-1}^{a_{d-1}q}X_d^{a_dq+p}X_{d+1}^i.$
The exponent of $X_1$ in $\ini _>(f)$ is $$a_1q+a_{d+1}q-i=(a_1+a_{d+1}-1)q+(q-i)=[n-(\sum_{j=2}^da_j)-1]q+(q-i).$$ The exponents of $X_2,\ldots ,X_d$ in $\ini _>(f)$ are $a_2q,\ldots,a_{d-1}q,a_dq+p$ respectively. Since $p+1<q,$ the sum of multiples of $q$ in exponents of $X_1,X_2,\ldots ,X_d$ is $(n-1)q.$ We have $\ini _>(f)\notin \ini _>(J).$ This implies that ${x_d}^p({x_1}^{a_1}{x_2}^{a_2}\cdots {x_{d+1}}^{a_{d+1}})^q\notin (I^n)^{[q]}.$ Hence $x_1^{a_1}x_2^{a_2}\cdots x_{d+1}^{a_{d+1}}\notin (I^n)^*.$\\
\rm {(3)} Now we show that $x_1^{a_1}x_2^{a_2}\cdots x_{d+1}^{a_{d+1}}\notin (I^n)^*$ for $\sum_{j=1}^{d+1}a_j<n.$ Observe that $$x_1^{a_1}x_2^{a_2}\cdots x_{d+1}^{a_{d+1}}\notin (I^n)^* \Leftrightarrow x_d^p(x_1^{a_1}x_2^{a_2}\cdots x_{d+1}^{a_{d+1}})^q\notin (I^n)^{[q]}=(x_1^q,x_2^q,\ldots ,x_{d}^q)^n$$ for some $q.$ For $e\in \mathbb{N}$ set $q=p^{e+1}>p+1$ and $a_{d+1}q=(p+1)u+i$ where $0\leq i \leq  p.$ Let $>$ be the graded reverse lex ordering with $X_{d+1}>X_1>X_2>\cdots >X_d$ in $A=\mathbb{F}[X_1,X_2,\ldots ,X_d].$ Let $J=(X_1^{p+1}+X_2^{p+1}+\cdots +X_{d+1}^{p+1},(X_1^q,X_2^q,\ldots, X_d^q )^n)$ be an ideal in $A.$ Using Lemma \ref{comp_initialideal}, it follows that $\ini _>(J)=(X_{d+1}^{p+1},(X_1^q,X_2^q,\ldots, X_d^q )^n).$ Using the arguments as above, we get
\begin{align*}
X_d^p(X_1^{a_1q}\cdots X_{d+1}^{a_{d+1}q})\equiv (-1)^u{X_1}^{a_1q}\cdots  X_{d-1}^{a_{d-1}q}X_d^{a_dq+p}(X_1^{p+1}+\cdots +X_d^{p+1})^uX_{d+1}^i \;(\text{mod }J).  
\end{align*}
Let $f=(-1)^uX_1^{a_1q}\cdots  X_{d-1}^{a_{d-1}q}X_d^{a_dq+p}(X_1^{p+1}+\cdots +X_d^{p+1})^uX_{d+1}^i.$ Clearly $$X_d^p(X_1^{a_1q}X_2^{a_2q}\cdots X_{d+1}^{a_{d+1}q})\in J \Leftrightarrow f\in J.$$ Note that $\ini _>(f)=X_1^{a_1q+(p+1)u}X_2^{a_2q}\cdots  X_{d-1}^{a_{d-1}q}X_d^{a_dq+p}X_{d+1}^i.$
The exponent of $X_1$ in $\ini _>(f)$ is $$a_1q+a_{d+1}q-i=(a_1+a_{d+1}-1)q+(q-i).$$ The exponents of $X_2,\ldots ,X_d$ in $\ini _>(f)$ is $a_2q,\ldots,a_{d-1}q,a_dq+p$ respectively. The sum of multiples of $q$ in exponents of $X_1,X_2,\ldots ,X_d$ is $$(a_1+a_{d+1})q+a_2q+\cdots +a_{d}q \text{ or }(a_1+a_{d+1}-1)q+a_2q+\cdots +a_{d}q$$ depending on $i=0$ or $i\neq 0$ respectively. Since $p+1<q,$ in either case the sum of multiples of $q$ in exponents of $X_1,X_2,\ldots ,X_d$ in $\ini_{>}(f)<nq.$ Therefore $\ini _>(f)\notin \ini _>(J).$ Thus $x_d^p(x_1^{a_1}x_2^{a_2}\cdots x_{d+1}^{a_{d+1}})^q\notin (I^n)^{[q]}$ for some $q$ which implies that $x_1^{a_1}x_2^{a_2}\cdots x_{d+1}^{a_{d+1}}\notin (I^n)^*.$
\end{proof}
\begin{Theorem}\label{THP ch p deg p+1}
Let $R=\frac{\mathbb{F}_p[X_1,X_2,\ldots , X_{d+1}]}{(X_1^{p+1}+X_2^{p+1}+\cdots +X_{d+1}^{p+1})},$ $I=(x_1,x_2,\ldots ,x_d)$ and $\m=(I, x_{d+1}).$ Then\\
\rm {(1)}  $(I^n)^*=I^n+\m ^{n+1}.$\\
\rm {(2)} $P_I^*(n)=(p+1)\binom{n+d-1}{d}-\binom{p}{2}\binom{n+d-2}{d-1}+\cdots +(-1)^j\binom{p}{j+1}\binom{n+d-j-1}{d-j},$ where $j=\min \{p-1,d\}.$\\ Moreover, $e_i^*(I)=0$ for $p\leq i \leq d$ and $\gr ^*_I(R)$ is Cohen-Macaulay.
\end{Theorem}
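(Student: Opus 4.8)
The plan is to read off (1) from the three parts of Theorem \ref{p+1 deg ch p} and then obtain (2) as a direct application of Proposition \ref{HP_t} with $t=1$ and $r=p+1$. The inclusion $(I^n)^*\supseteq I^n+\m^{n+1}$ is exactly Theorem \ref{p+1 deg ch p}(1), so the only content of (1) is the reverse inclusion $(I^n)^*\subseteq I^n+\m^{n+1}$. I would first use that the tight closure of a homogeneous ideal is homogeneous and that $(I^n)^*\subseteq\overline{I^n}=\m^n=R_{\geq n}$. A homogeneous element of degree $>n$ already lies in $R_{\geq n+1}=\m^{n+1}$, so it suffices to prove $(I^n)^*\cap R_n=I^n\cap R_n$. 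Writing a homogeneous $z\in(I^n)^*\cap R_n$ as $z=w+\sum_\alpha c_\alpha x^\alpha$, where $w\in I^n\cap R_n$ and $\alpha$ runs over the reduced degree-$n$ monomials $x_1^{a_1}\cdots x_{d+1}^{a_{d+1}}$ with $1\leq a_{d+1}\leq p$ (whose images form a basis of $R_n/(I^n\cap R_n)$), the element $y=z-w=\sum_\alpha c_\alpha x^\alpha$ lies in $(I^n)^*$, and the goal is to force every $c_\alpha=0$.

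The main obstacle is precisely this last step: Theorem \ref{p+1 deg ch p}(2) only rules out single monomials, so I must upgrade it to linear combinations by showing that the relevant leading terms cannot cancel. Since $x_d^p$ is a test element, $y\in(I^n)^*$ forces $x_d^p y^q\in(I^n)^{[q]}$ for all $q$; taking $q=p^{e+1}>p+1$ and using $y^q=\sum_\alpha c_\alpha^q x^{q\alpha}$ (Frobenius), the lift satisfies $X_d^p Y^q\equiv\sum_\alpha c_\alpha^q f_\alpha\pmod{J}$ in $A=\mathbb{F}_p[X_1,\ldots,X_{d+1}]$, where $J=(f,(X_1^q,\ldots,X_d^q)^n)$ corresponds to $(I^n)^{[q]}$ and each $f_\alpha$ is the reduction computed in Theorem \ref{p+1 deg ch p}(2), with leading monomial $\ini_>(f_\alpha)=X_1^{(a_1+a_{d+1})q-i}X_2^{a_2q}\cdots X_{d-1}^{a_{d-1}q}X_d^{a_dq+p}X_{d+1}^{i}$ and $a_{d+1}q=(p+1)u+i$, $1\leq i\leq p$.

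I would then check that $\alpha\mapsto\ini_>(f_\alpha)$ is injective on this monomial basis: the exponents of $X_2,\ldots,X_{d-1}$ recover $a_2,\ldots,a_{d-1}$ (divide by $q$), the $X_d$-exponent recovers $a_d$ (its residue modulo $q$ is $p$ since $p<q$), and the $X_{d+1}$-exponent $i\equiv a_{d+1}q\equiv(-1)^{e+1}a_{d+1}\pmod{p+1}$ recovers $a_{d+1}$, because multiplication by the unit $\pm1$ is a bijection of $\{1,\dots,p\}$ onto itself and $1\le a_{d+1}\le p$; finally $a_1$ is fixed by $\sum_j a_j=n$. Hence the $\ini_>(f_\alpha)$ are pairwise distinct, no cancellation occurs, and $\ini_>\big(\sum_\alpha c_\alpha^q f_\alpha\big)=\ini_>(f_{\alpha^*})$ for the $>$-largest $\alpha^*$ with $c_{\alpha^*}\neq0$. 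Since $\ini_>(f_{\alpha^*})\notin\ini_>(J)$ by Theorem \ref{p+1 deg ch p}(2), we get $\sum_\alpha c_\alpha^q f_\alpha\notin J$, so $x_d^p y^q\notin(I^n)^{[q]}$; as $x_d^p$ is a test element this contradicts $y\in(I^n)^*$ unless all $c_\alpha=0$. This forces $z=w\in I^n$ and proves (1).

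For (2), the equality $(I^n)^*=I^n+\m^{n+1}$ identifies the tight filtration $\{(I^n)^*\}$ with the filtration $\mathcal{F}=\{I_n\}$ of Proposition \ref{HP_t} for $t=1$, so $\gr_I^*(R)=\gr_{\mathcal{F}}(R)$ and $e_j^*(I)=e_j(\mathcal{F})$. Since $r=p+1\geq 3=t+2$, Proposition \ref{HP_t} applies — after harmlessly extending scalars to an infinite field, which does not change the Hilbert function of the fixed filtration $\{I^n+\m^{n+1}\}$ — and gives that $\gr_{\mathcal{F}}(R)$ is Cohen-Macaulay, $e_j(\mathcal{F})=\binom{r-t}{j+1}=\binom{p}{j+1}$ for $1\leq j\leq p-1$, and $e_j(\mathcal{F})=0$ for $p\leq j\leq d$. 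Substituting these coefficients (with $e_0=p+1$) into the Hilbert polynomial $\sum_{i=0}^{d}(-1)^i e_i(\mathcal{F})\binom{n+d-1-i}{d-i}$ yields the stated formula for $P_I^*(n)$ with top nonzero index $j=\min\{p-1,d\}$, and in particular $e_i^*(I)=0$ for $p\leq i\leq d$.
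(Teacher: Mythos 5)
Your proposal is correct and follows the same overall route as the paper---part (1) via Theorem \ref{p+1 deg ch p} and part (2) via Proposition \ref{HP_t} with $t=1$, $r=p+1$---but it supplies a step that the paper's one-line proof (``immediate consequences'') passes over. Theorem \ref{p+1 deg ch p}(2)--(3) only excludes individual monomials from $(I^n)^*$, and that does not by itself exclude their linear combinations; you rightly isolate this as the real content of the reverse inclusion $(I^n)^*\subseteq I^n+\m^{n+1}$. Your fix---reduce by homogeneity of $(I^n)^*$ and $(I^n)^*\subseteq\overline{I^n}=R_{\ge n}$ to degree $n$, expand $y^q=\sum_\alpha c_\alpha^q x^{q\alpha}$ by Frobenius, and observe that the leading monomials $\ini_>(f_\alpha)$ of the reductions computed in the proof of Theorem \ref{p+1 deg ch p}(2) are pairwise distinct (the exponents of $X_2,\dots,X_d$ recover $a_2,\dots,a_d$, the residue $i\equiv(-1)^{e+1}a_{d+1}\pmod{p+1}$ recovers $a_{d+1}$ because $1\le a_{d+1}\le p$, and $a_1$ is then forced by $|\alpha|=n$), so that no cancellation of leading terms can occur and $\ini_>\bigl(\sum_\alpha c_\alpha^qf_\alpha\bigr)\notin\ini_>(J)$---is exactly what is needed, and it reuses only ingredients already present in the paper (the test element $x_d^p$ and Lemma \ref{comp_initialideal}). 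Your handling of (2), including the harmless scalar extension to an infinite field before invoking Proposition \ref{HP_t} and the identification $e_j^*(I)=\binom{p}{j+1}$ for $1\le j\le p-1$ and $e_j^*(I)=0$ for $p\le j\le d$, agrees with the paper. One shared caveat: both your argument and the paper's implicitly require $d\ge2$, so that $X_d$ and $X_1$ are distinct variables in the leading-term bookkeeping; for $d=1$ one checks directly that $x_1^px_2^q\in(x_1^q)$ for all $q$, so $(x_1)^*=\m$ and the stated equality fails.
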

\begin{proof}
\rm{(1)} and \rm{(2)} are immediate consequences of Theorem \ref{p+1 deg ch p} and Proposition \ref{HP_t} respectively.
\end{proof}
\section{Quartic diagonal hypersurfaces} \label{sec_QuarticTHP}
Let $R=\frac{\mathbb{F}_p[X,Y,Z]}{(X^r+Y^r+Z^r)}$ and $I=(y,z).$ Let $p$ be a prime number such that $p \nmid r.$ Let $\mathcal{T} =\{(I^n)^*\}_{n\geq 0}$ be tight Hilbert filtration. Since $I$ is a reduction of $\m$ and $\gr_{\m}(R)$ is a domain, we have $\overline{I^n}=\overline{\m ^n}=\m ^n.$ Since $I$ is generated by a regular sequence, from Huneke-Itoh intersection theorem \cite{H1987,Itoh88}, it follows that for all $n\geq 0,$
$$I^n \cap \overline{I^{n+1}}=I^n\cap \overline{\m ^{n+1}}=\m I^n.$$
Using Theorem \ref{SV_Hyp} it follows that for $p>r-2,$ we have $I^*=I+\m ^2.$ By Theorem \ref{gentytclosure}, it follows that $(I^n)^*=I^n+\m ^{n+1}.$
Using Proposition \ref{HP_t}, we get
$$P_I^*(n) =r\binom {n+1}{2}-\binom {r-1}{2}n+\binom {r-1}{3}.$$
When $r=3,$ $$P_I^*(n)= 3\binom {n+1}{2}-n.$$\\
When $r=4,$ $$P_I^*(n)= 4\binom {n+1}{2}-3n+1.$$
These agree with the formulas derived in \cite{GMV2019}.
We discuss about tight closure of parameter ideals in $$R=\frac{\mathbb{F}_p[X,Y,Z,W]}{(X^4+Y^4+Z^4+W^4)}.$$ The Hilbert series and the Hilbert polynomial of $R$ are given by
\begin{align*}
H(R,\lambda)&=\sum_{n=0}^\infty \dim R_n \lambda ^n=\frac{1-\lambda^4}{(1-\lambda)^4},\\
P_R(n)&=e_0\binom{n+2}{2}-e_1(\m)(n+1)+e_2(\m)=4\binom{n+2}{2}-6(n+1)+4.
\end{align*}
Since, $\deg H(R,\lambda)=0,$ $H_R(0)\neq P_R(0)\text{ and }H_R(n)=P_R(n) \text{ for all }n\geq 1.$ 
\begin{Theorem}
Let $R=\frac{ \mathbb{F}_p [X,Y,Z,W]}{(X^4+Y^4+Z^4+W^4)},$ $I=(x,y,z)$ and $\m=(I,w).$ Then
\[
    I^*= 
\begin{cases}
    \m,& \text{ if } p=2,\\
    I+\m ^2,& \text{ if } p=3,\\
    I+\m ^3,& \text{ if } p\geq 5.
\end{cases}
\]
\end{Theorem}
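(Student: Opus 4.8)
The plan is to compute $I^*$ separately in the three characteristic ranges, using the Strong Vanishing Theorem (Theorem \ref{SV_Hyp}) where the characteristic is large, and direct Frobenius-power computations together with Gr\"obner basis arguments (as in Lemma \ref{comp_initialideal} and Theorem \ref{tytclIindegp+1}) where it is not. Here $r=4$ and $d=3$, so the Strong Vanishing bound is $p>(d-1)\deg(f)-\sum\deg(X_i)=2\cdot 4-3=5$; thus for $p\geq 7$ the theorem applies directly and gives $I^*=I+R_{\geq 3}=I+\m^3$, since $y_1,y_2,y_3$ are linear parameters with $A=3$. I expect the case $p=5$ to require separate handling, and this is where the main obstacle lies.

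For $p=5$ I would first show the containment $I+\m^3\subseteq I^*$. The monomials of degree $3$ involving $w$ that are not already in $I$ are $w^3$ and $w^2$ times a linear form in $x,y,z$, so by the filtration property it suffices to verify $w^3\in I^*$. Using the defining relation I would compute $(w^4)^{p^e}=-(x^4)^{p^e}-(y^4)^{p^e}-(z^4)^{p^e}\in I^{[p^{e+1}]}$, exactly as in equation \eqref{eqn chp deg p+1}, and then argue that a suitable Frobenius power of $w^3$ lands in $I^{[q]}$ by writing $(w^3)^{p^{e+1}}$ as a product of $p$ copies of $(w^3)^{p^e}$ and collecting enough factors of $w^4$; since $3p>4$ for $p=5$ this bookkeeping goes through. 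For the reverse containment I would show $w^2\notin I^*$ by exhibiting a test element $c$ (for instance a suitable power $z^{p}$, which is a test element for these diagonal hypersurfaces) and proving $c\,(w^2)^q\notin I^{[q]}$ for some $q$; this reduces, via Lemma \ref{comp_initialideal} with $N=p+1=5$ exponents replaced by the quartic relation, to a Gr\"obner basis computation in $\mathbb{F}_5[X,Y,Z,W]$ with the graded reverse lex order $W>X>Y>Z$, checking that the relevant leading monomial avoids $\ini_>(J)$ where $J=(X^4+Y^4+Z^4+W^4,(X^q,Y^q,Z^q)^{\,\bullet})$.

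The cases $p=2$ and $p=3$ are smaller and more hands-on. For $p=3$ the Strong Vanishing bound fails ($3\not>5$), but one still expects $I^*=I+\m^2$; I would prove $w^2\in I^*$ by a direct Frobenius computation from the relation, and then prove $w\notin I^*$ by the same Gr\"obner-basis/test-element strategy, showing $z^{p}w^{q}\notin I^{[q]}$ via the leading-term analysis. For $p=2$ the ring has $\deg f=4=2p$, the hypersurface is highly degenerate (indeed $X^4+Y^4+Z^4+W^4=(X^2+Y^2+Z^2+W^2)^2$ in characteristic $2$, so $R$ is non-reduced), and I expect $I^*=\m$; the containment $\m\subseteq I^*$ should follow because $w^2=(x^2+y^2+z^2)\cdot(\text{unit-type correction})$ forces even $w\in I^*$ directly from $w^{2^{e}}\in I^{[2^{e}]}$ considerations, while the reverse is automatic. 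The delicate point throughout is the correct choice of test element and the verification that the claimed monomial is a genuine leading term outside $\ini_>(J)$; the main obstacle is the borderline prime $p=5$, where neither the clean Strong Vanishing conclusion nor the $\m^2$-pattern of the smaller primes applies, and one must carefully balance the exponent arithmetic modulo the quartic relation.
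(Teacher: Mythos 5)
Your treatment of $p\geq 7$ (Strong Vanishing), $p=3$ (direct Frobenius computation for $w^2\in I^*$ plus a test-element/Gr\"obner exclusion of $w$), and $p=2$ (the relation collapses to $w^q=x^q+y^q+z^q$ so $w\in I^*$) matches the paper. The genuine gap is at $p=5$, which you correctly identify as the hard case but then handle with two steps that do not work as stated. First, for the containment $\m^3\subseteq I^*$ you invoke ``$(w^4)^{p^e}=-(x^4)^{p^e}-(y^4)^{p^e}-(z^4)^{p^e}\in I^{[p^{e+1}]}$, exactly as in equation (4.1).'' That equation is specific to hypersurfaces of degree $p+1$: it holds because $(p+1)p^e\geq p^{e+1}$. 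Here the degree is $4$ and $p=5$, so $(x^4)^{5^e}=x^{4\cdot 5^e}$ with $4\cdot 5^e<5^{e+1}$, and $(w^4)^{5^e}\notin I^{[5^{e+1}]}$. The paper avoids this entirely by quoting the Brian\c{c}on--Skoda theorem, which for the $3$-generated ideal $I$ gives $\overline{I^3}=\m^3\subseteq I^*$ uniformly in $p$; if you want a hands-on substitute you must expand $w^{3q}=\pm w^{3q\bmod 4}(x^4+y^4+z^4)^{\lfloor 3q/4\rfloor}$ and check that a fixed multiplier pushes every monomial $x^{4a}y^{4b}z^{4c}$ into $(x^q,y^q,z^q)$, which is a borderline computation ($\max(a,b,c)$ can equal $(q-1)/4$), not automatic bookkeeping.

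Second, your exclusion of $w^2$ at $p=5$ by showing $z^5(w^2)^q\notin I^{[q]}$ via a leading-term argument runs into a real obstruction: writing $z^5w^{2q}\equiv\pm z^5w^2(x^4+y^4+z^4)^{(q-1)/2}\pmod{(f)}$, the grevlex leading monomial is $X^{2q-2}Z^5W^2$, which \emph{is} divisible by $X^q$ and hence lies in $\ini_>(J)$; so the one-step leading-term check that works in the paper's Theorem 5.3 fails here, and you would have to compute a full normal form and control multinomial coefficients modulo $5$. The idea you are missing is the one the paper uses: for $p\equiv 1\pmod 4$ the ring is $F$-pure, so the test ideal is radical; combined with $w^3\in\tau(R)$ and Brian\c{c}on--Skoda this forces $\tau(R)=\m$, whence $w^2\in I^*$ would give $w\cdot w^2=w^3\in I$, which is false for trivial degree reasons (a one-line Gr\"obner check that $W^3\notin(X,Y,Z)+(f)$). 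This $F$-purity route is what makes $p=5$ tractable, and without it your plan for that prime does not close.
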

\begin{proof}
By the Brian\c{c}on-Skoda Theorem, we have, $I+\mathfrak{m}^3\subseteq I^*.$ We check if $w,w^2\in I^*.$ {We note that $x^3,y^3,z^3,w^3$ are test elements of $R$ by Theorem \ref{Thm:Hoc_testele}.}
We find $I^*$ for various primes.\\
\textbf{Case (i)} Suppose $p=2$. Clearly, $w^4=-x^4-y^4-z^4.$
We have $$w^{2^e}=(x^4+y^4+z^4)^{2^{e-2}}.$$\\
For $q=2^e,$ $w^q=x^q+y^q+z^q\in I^{[q]}.$ Hence $w\in I^*.$ This implies that $I^*=\mathfrak{m}.$\\
\textbf{Case (ii)} Suppose $p=3$. Clearly, 
\begin{equation}
(w^4)^{3^e}=-(x^4)^{3^e}-(y^4)^{3^e}-(z^4)^{3^e}\in I^{[3^{e+1}]}
\end{equation}
Hence $(w^2)^{3^e}(w^2)^{3^e}(w^2)^{3^e}=(w^2)^{3^{e+1}}\in I^{[3^{e+1}]}$ for all $e.$
Therefore $w^2\in I^*.$ 
We have $w^3w^{27}\notin I^{[27]}.$ If possible let $w^{30}\in I^{[27]}.$ Then 
\begin{equation}\label{eq:foriniideal}
W^{30}\in (X^{27},Y^{27},Z^{27})+(X^4+Y^4+Z^4+W^4)
\end{equation}
in $\mathbb{F}_p[X,Y,Z,W].$
Let $>$ denote the graded reverse lex ordering with $W>X>Y>Z$ in $\mathbb{F}_p[X,Y,Z,W].$ Note that Lemma \ref{comp_initialideal} yields a contradiction to equation \ref{eq:foriniideal}. Using Theorem \ref{Thm:Hoc_testele}, $w^3$ is a test element, which implies that $w\notin I^*.$Therefore, $I^*=(x,y,z,w^2)=I+\mathfrak{m}^2.$ \\
\textbf{Case (iii)} Suppose $p=5$. Since $ p \equiv 1 \pmod {4}$ by \cite[Proposition 5.21 (c)]{HR}, it follows that $R$ is F-pure. Hence by Proposition  \ref{Prop_FedWat_test}, the test ideal is a radical ideal of $R$ and since $x^3,y^3,z^3,w^3\in \tau(R),$ it follows that $x,y,z,w\in \tau(R).$ We will show that $I^*=I+\mathfrak{m}^3.$ If possible let $w^2\in I^*$ then $ww^2\in I$ as $w$ is a test element. This implies that $$W^3\in (X,Y,Z)+(X^4+Y^4+Z^4+W^4).$$ Consider the graded reverse lex ordering $>$ with $W>X>Y>Z$ in $\mathbb{F}_p[X,Y,Z,W]$ and use Lemma \ref{comp_initialideal} to obtain a contradiction. Hence it follows that $w^2 \notin I^*.$ Hence $I^*=(x,y,z,w^3).$\\
\textbf{Case (iv)} Suppose $p>5$. It follows from Theorem \ref{SV_Hyp} that,
$$I^*=I+\mathfrak{m}^3.$$ Hence $R$ is not $F$-rational.
\end{proof}
In order to compute the tight Hilbert polynomial, we find $(I^n)^*.$
\begin{Proposition}\label{quartictytclofpow}
Let $R=\frac{\mathbb{F}_p[X,Y,Z,W]}{(X^4+Y^4+Z^4+W^4)},$ $I=(x,y,z)$ and $\m=(I,w).$ Then for $n\geq 1,$ \\
\[
    (I^n)^*= 
\begin{cases}
    I^n+(x+y+z+w),&  \text{ if }p=2,\\
    I^n+\m ^{n+1},& \text{ if } p=3,\\
    I^n+\m ^{n+2}, & \text{ if } p\geq 5.
\end{cases}
\]
\end{Proposition}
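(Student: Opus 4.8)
The plan is to determine $(I^n)^*$ in the three characteristic cases by combining the computation of $I^*$ already carried out in the previous theorem with the filtration results of Section \ref{sec_HPfilt} and the structural results on tight closure of powers. In each case the target ideal has the form $I^n + \m^{n+t}$, so after establishing the single-power case $I^* = I + \m^{t+1}$ (with $t=0,1,2$ for $p=2,3,\geq 5$ respectively, which is exactly the content of the preceding theorem), the natural strategy is to show that the tight-closure filtration $\mathcal{T} = \{(I^n)^*\}$ coincides with the filtration $\mathcal{F} = \{I^n + \m^{n+t}\}$ studied in Proposition \ref{HP_t}.

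First I would treat the case $p \geq 5$, where I expect the cleanest argument. Since $p \geq 5 > r - 2 = 2$, the Strong Vanishing Theorem (Theorem \ref{SV_Hyp}) applies and gives $I^* = I + \m^3$, matching the $t=2$ case. The Eagon-Northcott complex resolves $R/I^n$ (as $I$ is generated by a regular sequence of length $3$), and its differentials $d_i$ for $i \geq 2$ are linear; feeding the top-degree Betti numbers of this resolution into Theorem \ref{gentytclosure} pins down $(I^n)^* = I^n + \m^{n+2}$ exactly, since the bounds $N$ and $M$ there coincide when the last map is linear. This is precisely the mechanism used in Theorem \ref{THP_SVT}, so the $p \geq 5$ case should follow the same template verbatim.

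For $p = 3$ the degree of the defining equation is $r = 4 = p+1$, so this is the degree-$(p+1)$ situation of Section \ref{sec_THP p+1 degree} with $d = 3$. Here the Strong Vanishing Theorem is unavailable (the characteristic is too small), and the computation of $(I^n)^*$ must be done by hand via initial-ideal arguments. I would invoke Theorem \ref{p+1 deg ch p} directly: part (1) gives the containment $I^n + \m^{n+1} \subseteq (I^n)^*$, while parts (2) and (3) show that no monomial of $\m^n \setminus I^n$ lying below degree $n+1$ survives in $(I^n)^*$, which together force $(I^n)^* = I^n + \m^{n+1}$, the $t=1$ case. The main obstacle is this $p=3$ case: it does not follow formally from a vanishing theorem and instead relies on the explicit Gr\"obner-basis computation of Lemma \ref{comp_initialideal} together with the test element $x_3^p = z^3$ to certify non-membership; I would need to verify carefully that the hypotheses of Theorem \ref{p+1 deg ch p} are met with $d=3$ and that the exponent bookkeeping in its proof transfers without change.

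Finally, for $p = 2$ the previous theorem shows $w \in I^*$, so $I^* = \m = I + (x+y+z+w)$ and the ring is not $F$-rational; here $w$ itself (equivalently $x+y+z+w$ modulo $I$) is in the tight closure because $w^q \in I^{[q]}$ for $q = 2^e$. For the powers I would argue that $(I^n)^* = I^n + (x+y+z+w)$ by showing that adjoining the single extra generator $x+y+z+w$ already produces a tightly closed ideal: one checks $I^n + (x+y+z+w)$ is the full preimage making the Frobenius obstruction vanish, using $w^q = x^q + y^q + z^q$ to push $w$-powers into $I^{[q]}$, and confirms no further elements enter. Once all three cases yield the stated form $I^n + \m^{n+t}$, the Hilbert-polynomial consequences and Cohen-Macaulayness of $\gr^*_I(R)$ will follow immediately from Proposition \ref{HP_t}, completing the route to the tight Hilbert polynomials asserted in the section's main theorem.
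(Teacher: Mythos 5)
Your outline matches the paper for $p=3$ (both invoke Theorem \ref{p+1 deg ch p}(1)--(3)) and for $p>5$ (Strong Vanishing plus the Eagon--Northcott resolution fed into Theorem \ref{gentytclosure}), but there is a genuine gap at $p=5$. You justify applying Theorem \ref{SV_Hyp} for all $p\geq 5$ by the inequality ``$p\geq 5>r-2=2$'', but that bound belongs to the two-dimensional case $K[X,Y,Z]/(X^r+Y^r+Z^r)$. For the quartic in four variables one has $d=3$ and $\deg f=4$, so the hypothesis of Theorem \ref{SV_Hyp} reads $p>(d-1)(\deg f)-\sum_{i=1}^{d}\deg X_i=2\cdot 4-3=5$, which excludes $p=5$. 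The paper therefore treats $p=5$ separately: since $5\equiv 1\pmod 4$, the ring is $F$-pure, the test ideal is radical and equals $\m$, so $x,y,z$ are test elements and \cite[Lemma 3.1]{AHS} (via Proposition \ref{ahsappl}) gives $(I^n)^*=I^{n-1}I^*=I^{n-1}(I+\m^3)=I^n+\m^{n+2}$ using $r(\m)=3$. Your argument as written does not cover $p=5$, and you would need to supply this (or some other) mechanism for that prime.

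Your $p=2$ case is also under-justified. Saying one ``checks that no further elements enter'' is not an argument; the paper's route is to observe that in characteristic $2$ the hypersurface equation is $(X+Y+Z+W)^4$, so $R$ has the unique minimal prime $\p=(x+y+z+w)$, $R/\p$ is a polynomial ring in which every ideal is tightly closed, and tight closure is detected modulo minimal primes; hence $(I^n)^*$ is exactly the preimage of $I^n(R/\p)$, namely $I^n+(x+y+z+w)$. You should make this reduction explicit. Note also that your closing sentence, asserting that all three cases have the form $I^n+\m^{n+t}$, is false for $p=2$: there the answer is $I^n+(x+y+z+w)$ and, as the paper remarks, the resulting filtration is not even $I$-admissible, which is why $p=2$ is omitted from the Hilbert-polynomial statement.
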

\begin{proof}
\textbf{Case (i)} Suppose $p=2.$ It is clear that the only minimal prime of $R$ is $(x+y+z+w)=\mathfrak{p},$ say. Since $R/\mathfrak{p}$ is a polynomial ring, every ideal in $R/\mathfrak{p}$ is tightly closed. We know that for any ideal $J$ in $R$ and $r\in R,$ $r\in J^*$ if and only if $\overline{r}\in \left( J\frac{R}{\mathfrak{p}} \right)^*$ for $\overline{r}\in R/\mathfrak{p}.$\\ Since $$\left( \frac{I^n+\mathfrak{p}}{\mathfrak{p}}\right)^*=\frac{I^n+(x+y+z+w)}{(x+y+z+w)}=\frac{I^n+\mathfrak{p}}{\mathfrak{p}} \text{ in } \frac{R}{\mathfrak{p}},$$ it implies that $I^*=I+(x+y+z+w)=\m$ and $(I^n)^*=I^n+(x+y+z+w).$ Note that $\{(I^n)^*\}_{n\geq 0}$ is not an $I$-admissible filtration.\\
\textbf{Case (ii)} Suppose $p=3.$ From Theorem \ref{THP ch p deg p+1} \rm {(1)}, it follows that $(I^n)^*=I^n+\m ^{n+1}.$\\
\textbf{Case (iii)} Suppose $p=5.$ If $ p \equiv 1 (\mod 4),$ then $R$ is F-pure. It follows from \cite[Lemma 3.1]{AHS} that $$(I^n)^*=I^{n-1}(I+\mathfrak{m}^3)=I^n+\m ^{n+2} \text{ as } r(\m)=3.$$
\textbf{Case (iv)} Suppose $p> 5.$ We use Theorem \ref{gentytclosure} to find out the tight closure of powers of ideal $I$ in $R.$
We show that $pd(R/I^n)<\infty.$ The Koszul complex gives a projective resolution in the case when $n=1.$ By using induction on $n$ and the following short exact sequence one can easily conclude that the projective dimension of $R/I$ and $R/I^n$ are equal for all $n.$
$$0\longrightarrow I^{n-1}/I^n \longrightarrow R/I^n\longrightarrow R/I^{n-1}\longrightarrow 0$$
The Eagon-Northcott complex for $R/I^n$ gives a minimal resolution which is linear. The degree of all generators of $I^n$ is $n.$
When $p> 5,$ $R$ satisfies Strong Vanishing Conjecture by Theorem \ref{SV_Hyp}. In this case $b_{3i}=n+2$ (refer Theorem \ref{gentytclosure}). Therefore it follows that $(I^n)^*=I^n+\mathfrak{m}^{n+2}.$
\end{proof}
\begin{Theorem}
Let $R=\frac{ \mathbb{F}_p [X,Y,Z,W]}{(X^4+Y^4+Z^4+W^4)},$ $I=(x,y,z)$ and $\m=(I,w).$ Then for $n\geq 1,$
\[
    P_I^*(n)= 
\begin{cases}
    4\binom{n+2}{3}-3\binom{n+1}{2}+n,& \text{ if } p=3,\\
    4\binom{n+2}{3}-\binom{n+1}{2},& \text{ if } p\geq 5.
\end{cases}
\]
Moreover $\gr ^*_I(R)$ is Cohen-Macaulay.
\end{Theorem}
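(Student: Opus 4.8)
The plan is to deduce both formulas, and the Cohen--Macaulayness, directly from the general Hilbert-series computation of Proposition~\ref{HP_t}, using the explicit shape of the tight closure filtration recorded in Proposition~\ref{quartictytclofpow}. Here the ring is $3$-dimensional, so $d=3$, the defining form has degree $r=4$, and $I=(x,y,z)$ is a linear system of parameters. For $p=3$ Proposition~\ref{quartictytclofpow} gives $(I^n)^*=I^n+\m^{n+1}$, so the tight filtration $\mathcal{T}=\{(I^n)^*\}_{n\geq 0}$ is exactly the filtration $\mathcal{F}=\{I_n\}_{n\geq 0}$ of Proposition~\ref{HP_t} with parameter $t=1$; for $p\geq 5$ it gives $(I^n)^*=I^n+\m^{n+2}$, the same filtration with $t=2$. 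In both cases $I_0=R=(I^0)^*$ and $I_n=(I^n)^*$ for $n\geq 1$, whence $\gr^*_I(R)=\gr_{\mathcal{F}}(R)$ and $P_I^*(n)=P_{\mathcal{F}}(n)$. The case $p=2$ is rightly excluded, since there $\{(I^n)^*\}$ fails to be $I$-admissible and no tight Hilbert polynomial exists.

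Before quoting Proposition~\ref{HP_t} I would verify that its hypotheses survive the passage to the finite field $K=\mathbb{F}_p$. The form $f=X^4+Y^4+Z^4+W^4$ is square-free precisely when $p\nmid 4$, which holds for $p=3$ and every $p\geq 5$, so $R$ is reduced and $R_{\m}$ is analytically unramified; and $f$ is already monic in $W$, so $w$ is integral over $I$ and $I=(x,y,z)$ is a minimal reduction of $\m$ with no coordinate change needed. The infinite-field hypothesis enters the proof of Proposition~\ref{HP_t} only to manufacture such a linear minimal reduction by a generic change of variables, which is superfluous in this diagonal, $W$-monic setting; alternatively, the relevant colengths $\dim_{\mathbb{F}_p}R/I_n$ are unchanged under the faithfully flat base change to $\overline{\mathbb{F}}_p$, so the polynomial may just as well be computed over an infinite field.

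Granting applicability, the rest is substitution. Since $r=4\geq t+2$ in both cases, Proposition~\ref{HP_t}(3) supplies $e_0(\mathcal{F})=r=4$, together with $e_j(\mathcal{F})=\binom{r-t}{j+1}$ for $1\leq j\leq r-t-1$ and $e_j(\mathcal{F})=0$ for $r-t\leq j\leq d$. For $p=3$ (so $t=1$) this gives $e_1=\binom{3}{2}=3$, $e_2=\binom{3}{3}=1$, and $e_3=0$, so that
\[
P_I^*(n)=4\binom{n+2}{3}-3\binom{n+1}{2}+n.
\]
For $p\geq 5$ (so $t=2$) it gives $e_1=\binom{2}{2}=1$ and $e_2=e_3=0$, so that
\[
P_I^*(n)=4\binom{n+2}{3}-\binom{n+1}{2}.
\]
Cohen--Macaulayness of $\gr^*_I(R)=\gr_{\mathcal{F}}(R)$ is then immediate from Proposition~\ref{HP_t}(1), which applies because $R$ is a hypersurface, hence Cohen--Macaulay.

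I expect the only genuinely delicate point to be the finite-field issue addressed in the second paragraph; once Proposition~\ref{HP_t} is seen to apply, both parts reduce to inserting $(d,r,t)=(3,4,1)$ and $(3,4,2)$ into its conclusion and simplifying.
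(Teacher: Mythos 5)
Your proof is correct, and the formulas check out: with $(d,r)=(3,4)$ and $t=1$ (resp.\ $t=2$) Proposition~\ref{HP_t}(3) gives $e_0=4$, $e_1=\binom{3}{2}=3$, $e_2=\binom{3}{3}=1$, $e_3=0$ (resp.\ $e_1=\binom{2}{2}=1$, $e_2=e_3=0$), exactly matching the claimed polynomials, and Cohen--Macaulayness comes from Proposition~\ref{HP_t}(1). Your route, however, is organized differently from the paper's. The paper's proof is a three-line dispatch: for $p=3$ it cites Theorem~\ref{THP ch p deg p+1}(2) (which itself rests on Theorem~\ref{p+1 deg ch p} and Proposition~\ref{HP_t}, so there your argument coincides with the paper's after unwinding); for $p=5$ it cites Theorem~\ref{THP_Fpure}, and for $p>5$ Theorem~\ref{THP_SVT} --- both of which compute $e_1^*(I)=\ell(I^*/I)$ via the Huneke--Ooishi theorem (Proposition~\ref{ahsappl}) using that $r^*(I)\leq 1$, so the degree-$3$ and degree-$2$ coefficients vanish for structural reasons without ever invoking the Hilbert series of the filtration $\{I^n+\m^{n+2}\}$. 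You instead feed the explicit description $(I^n)^*=I^n+\m^{n+2}$ from Proposition~\ref{quartictytclofpow} into the general Hilbert-series computation of Proposition~\ref{HP_t}, which treats all primes $p\geq 3$ uniformly and makes the vanishing of $e_2^*$ and $e_3^*$ a numerical consequence of the $h$-polynomial $3+\lambda$. Your handling of the finite-field hypothesis in Proposition~\ref{HP_t} (either noting that $f$ is already monic in $W$ so no generic coordinate change is needed, or passing to $\overline{\mathbb{F}}_p$ by faithfully flat base change) is a real point that needs saying and you say it correctly; the exclusion of $p=2$ is likewise right, since there the tight filtration is not $I$-admissible.
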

\begin{proof}
\textbf{Case (i)} Suppose $p=3$. Use Theorem \ref{THP ch p deg p+1} \rm {(2)}.\\
\textbf{Case (ii)} Suppose $p=5$. Since $p \equiv 1 \pmod {4},$ the result follows from Theorem \ref{THP_Fpure}.\\
\textbf{Case (iii)} Suppose $p>5$. It follows as a consequence of Theorem \ref{THP_SVT}.
\end{proof}

\bibliographystyle{plain}
\bibliography{ref}
\end{document}